\newtheorem{theorem}{Theorem}[section] 
\newtheorem{lemma}[theorem]{Lemma}
\newtheorem{corollary}[theorem]{Corollary}
\newtheorem{definition}{Definition}[section]
\newtheorem{proposition}[theorem]{Proposition}
\newtheorem{remark}[theorem]{Remark}
\newcommand{\supp}{\operatorname{supp}}
\newcommand{\cK}{{\mathcal K}}
\newcommand{\bR}{{\mathbb R}}
\newcommand{\bN}{{\mathbb N}}
\newcommand\norm[1]{\Arrowvert {#1}\Arrowvert}
\def\avint{\mathop{\,\rlap{$\diagup$}\!\!\int}\nolimits}
\title[]{Regularity in the two-phase Bernoulli problem for the $p$-Laplace operator}
\author[M. Bayrami]{Masoud Bayrami}
\address{Department of Mathematical Sciences, Sharif University of Technology, Tehran, Iran}
\email{masoud.bayrami1990@sharif.edu}
\author[M. Fotouhi]{Morteza Fotouhi}
\address{Department of Mathematical Sciences, Sharif University of Technology, Tehran, Iran}
\email{fotouhi@sharif.edu}
\date{\today}
\begin{document}

\begin{abstract}
We show that any minimizer of the well-known ACF  functional (for the $p$-Laplacian) is  a viscosity solution. 
This allows us to   establish a uniform flatness  decay  at the two-phase free boundary points to improve the flatness, which boils down to $C^{1,\eta}$ regularity of  the flat part of the  free boundary. 
This result, in turn, is used to prove the Lipschitz regularity of  minimizers by a dichotomy argument.  
It is noteworthy that the analysis of branch points is also included. 
\end{abstract}

\keywords{Free boundary regularity, Two-phase Bernoulli problem, $p$-Laplacian.}
\subjclass[1991]{35R35, 35J92.}

\thanks{M. Bayrami and M. Fotouhi was supported by Iran National Science Foundation (INSF) under project No. 4001885.}

\maketitle

\section{Introduction and main result}
We study the problem of minimizing the following two-phase functional
\begin{equation*}
J_{\mathrm{TP}}(v,D) :=\int_{D} |\nabla v|^p + (p-1)\lambda_+^p \chi_{\{v>0\}} + (p-1)\lambda_-^p \chi_{\{v<0\}} \, dx,
\qquad v \in \cK,
\end{equation*}
where $D$ is a bounded and smooth domain in $\mathbb{R}^n$, $\chi_A$ is the characteristic function of the set $A$,
 $1<p<\infty$,  and $\lambda_\pm > 0$ are given constants.
The class of admissible functions $\cK$, consists of all functions $v \in g + W_0^{1,p}(D)$,  where  $g \in W^{1,p}(D)$.

Any  minimizer $u$ satisfies, in a certain weak sense, the following system of equations
\begin{equation}
\label{OVERDETERMINED}
\begin{cases}
\Delta_p u := \mathrm{div}( |\nabla u|^{p-2} \nabla u ) = 0, & \qquad  \text{in} \quad \,\,\, \Omega_u^+ \cup \Omega_u^-, \\
|\nabla u^+|^p-|\nabla u^-|^p= \lambda_+^p-\lambda_-^p,  & \qquad  \text{on} \quad \left(\partial \Omega^+_u \cap \partial\Omega^-_u \right) \cap D, \\
 |\nabla u^+| \geq \lambda_{+},\,\,\,  |\nabla u^-| \geq \lambda_{-},  &  \qquad \text{on} \quad \left(\partial \Omega^+_u \cap \partial\Omega^-_u \right) \cap D, \\
|\nabla u^+| = \lambda_{+}, & \qquad  \text{on} \quad \left(\partial\Omega^+_u \setminus \partial\Omega^-_u \right) \cap D, \\
|\nabla u^-| = \lambda_{-}, & \qquad  \text{on} \quad \left(\partial\Omega^-_u \setminus \partial\Omega^+_u \right) \cap D,
\end{cases}
\end{equation}
where  $\Omega^\pm _u = \{x \in D \, : \, \pm u(x) > 0\}$,  $u^\pm :=\max\{\pm u,0\}$,
and $\Delta_p u = \mathrm{div}( |\nabla u|^{p-2} \nabla u)$ is the $p$-Laplace operator; see Lemma \ref{L2}. 

These types of problems are known as Bernoulli-type free boundary problems which appear in various models of fluid mechanics or heat conduction (see e.g. \cite{MR682265, MR733897, MR740956, MR772122, karakhanyan2021regularity, ferrari2022regularity}).
For the admissible functions in $\cK^+:=\{v \in \cK \, : \, v \geq 0\}$, the analogous one-phase functional and the corresponding overdetermined problem  called the one-phase Bernoulli problem, was first studied in \cite{MR618549} for the case $p=2$, and then in \cite{MR732100} for the two-phase problem. Also, the case of uniformly elliptic quasilinear equations in the one-phase case has been treated in \cite{MR752578}. 
The difficulty of the problem \eqref{OVERDETERMINED} is that the governing operator, $\Delta_p u = \mathrm{div}( |\nabla u|^{p-2} \nabla u )$, is not uniformly elliptic. 
Obviously, close to   regular free boundary points one expects that   $|\nabla u| > 0$  implying uniform ellipticity of the $p$-Laplacian.
However, without such a regularity assumption, it is difficult to prove non-degeneracy up to the free boundary. In \cite{MR2133664}, the authors circumvent this issue by simultaneously showing the non-degeneracy of the gradient and the regularity of the free boundary.

Here below we list  terminologies and definitions that are  frequently used in this paper: 
\begin{itemize}
\item
A function $u : D \to \mathbb{R}$ is said to be a \textit{minimizer} of $J_{\mathrm{TP}}$ \textit{in} $D$ if and only if 
$$ J_{\mathrm{TP}}(u,D)\leq J_{\mathrm{TP}}(v,D), $$
for all $v\in \cK$.

\item
 $F(u):=\left(\partial \Omega^+_u \cup \partial \Omega^-_u \right) \cap D$, denotes the free boundary of the minimizer $u$.
\item
The set $\Gamma_{\mathrm{TP}}:= \partial \Omega^+_u \cap \partial \Omega^-_u \cap D$ is the two-phase points of the free boundary $F(u)$.
\item
The boundary of positive and negative phases, i.e. $\partial \Omega^{\pm}_u \cap D$ can be decomposed as 
$$ \partial \Omega^{\pm}_u \cap D = \Gamma^{\pm}_{\mathrm{OP}} \cup \Gamma_{\mathrm{TP}},$$
where $\Gamma^{+}_{\mathrm{OP}}:=\left(\partial \Omega^{+}_u \setminus \partial \Omega^{-}_u \right) \cap D$ and $\Gamma^{-}_{\mathrm{OP}}:=\left(\partial \Omega^{-}_u \setminus \partial \Omega^{+}_u \right) \cap D$ are the one-phase parts of $F(u)$. 
\item
We will say that $x_0 \in \Gamma_{\mathrm{TP}}$ is an \textit{interior} two-phase point and will denote it by $x_0 \in \Gamma^{\mathrm{int}}_{\mathrm{TP}}$, if
$$ | B_r(x_0) \cap \{u=0\} | = 0, \qquad \text{for some} \quad r>0. $$
\item
We will say that $x_0 \in \Gamma_{\mathrm{TP}}$ is a \textit{branching} point and will denote it by $x_0 \in \Gamma^{\mathrm{br}}_{\mathrm{TP}}$, if
$$ | B_r(x_0) \cap \{u=0\} | > 0, \qquad \text{for every} \quad r>0. $$

\item
We denote by $H_{\alpha,\textbf{e}}$ the following one-dimensional function
\begin{equation*}
H_{\alpha,\textbf{e}}(x)=\alpha \left( x \cdot \textbf{e}\right)^+-\beta \left( x \cdot \textbf{e} \right)^-, 
\end{equation*}
with a unit vector $\textbf{e}\in \mathbb{S}^{n-1}$ and the constants $\alpha$ and $\beta$ satisfying the conditions 
\begin{equation}
\label{E0-Con}
\alpha \geq \lambda_+, \qquad \beta \geq \lambda_-, \qquad \alpha^p-\beta^p= \lambda^p_{+}-\lambda^p_{-}.
\end{equation}
$H_{\alpha,\textbf{e}}$ is the so-called {\it two-plane} solution to \eqref{OVERDETERMINED}.
\end{itemize}

Our goal is to study the regularity of the free boundary $F(u)=\left(\partial \Omega^+_u \cup \partial \Omega^-_u \right) \cap D$, for minimizers of $J_{\mathrm{TP}}$ in $D$, around the two-phase points. 
More precisely, we expect that  in a suitable neighborhood of the two-phase points, the sets $\Omega^+_u$ and $\Omega^-_u$ are two $C^{1,\eta}$-regular domains touching along the closed set of two-phase points $\Gamma_{\mathrm{TP}}$. 
For the special case $p=2$, this result has been recently obtained in \cite{MR4285137}, by invoking the linearization technique  and we will closely follow this technique in order to generalize this result to any $1<p<\infty$.

As is usual for problems of this type, prior to applying any method to determine the regularity of the free boundary, the Lipschitz continuity of the minimizers across the free boundary is required. 
Our partial result for the regularity of the free boundary, however, gives us the Lipschitz regularity of the solution as well.
We first show $C^{1,\eta}$-regularity of the free boundary with a flatness assumption in the following theorem.

\begin{theorem}[Flatness implies $C^{1,\eta}$]
\label{T1}
Let $u:D \to \mathbb{R}$ be a minimizer of $J_{\mathrm{TP}}$ in $D$ and $0 \in \Gamma_{\mathrm{TP}}$.
For any positive constants $\Lambda_0$ and $\Lambda_1$, there exists a constant $\bar \epsilon=\bar\epsilon(n,p,\Lambda_0,\Lambda_1)$ such that if 
\begin{equation}\label{T1-Eq1}
\norm{u-H_{\alpha,\textbf{e}}}_{L^\infty(B_1)} \le \bar\epsilon,
\end{equation}
for some  $\textbf{e}\in \mathbb{S}^{n-1}$ and $\max(\Lambda_0, \lambda_+)\le  \alpha \le \Lambda_1$, then  $\partial \Omega^{\pm}_u \cap B_{r_0}$ are $C^{1,\eta}$ graphs for some $r_0>0$ and for any $\eta\in(0, \frac13)$.
\end{theorem}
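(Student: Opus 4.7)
The plan is to adapt the improvement-of-flatness / linearization scheme of Caffarelli to the two-phase $p$-Laplacian problem, in the spirit of the argument used in \cite{MR4285137} for $p=2$. The starting point is the fact (promised in Lemma~\ref{L2} and the abstract) that minimizers of $J_{\mathrm{TP}}$ are viscosity solutions of \eqref{OVERDETERMINED}, so one can reason directly with comparison-type tools at the free boundary. The hypothesis $\alpha\ge\Lambda_0>0$ is crucial: together with \eqref{E0-Con} it forces $|\nabla u|\approx\alpha$ and $|\nabla u|\approx\beta$ on the two phases near $0$, so $\Delta_p$ is a uniformly elliptic operator in a neighborhood of $F(u)$ with ellipticity constants depending only on $n,p,\Lambda_0,\Lambda_1$. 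This turns the problem into a genuinely uniformly elliptic one, whose linearizations around the constant gradients $\alpha\textbf{e}$ and $\beta\textbf{e}$ are a pair of constant-coefficient elliptic operators $L_+,L_-$.

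The core step is a one-step improvement of flatness: there exist universal $\rho\in(0,1/2)$, $\eta_0\in(0,1)$ and $\bar\epsilon_0$ such that if $\norm{u-H_{\alpha,\textbf{e}}}_{L^\infty(B_1)}\le \epsilon\le\bar\epsilon_0$, then
\[
\norm{u-H_{\alpha',\textbf{e}'}}_{L^\infty(B_\rho)}\le \tfrac12\, \rho^{1+\eta_0}\epsilon
\]
for some admissible $(\alpha',\textbf{e}')$ with $|\alpha-\alpha'|+|\textbf{e}-\textbf{e}'|\le C\epsilon$. I would prove this by contradiction and compactness. Assuming it fails along a sequence $u_k$ with $\epsilon_k\to 0$, I extract subsequences so that $\alpha_k\to\alpha_\infty$, $\textbf{e}_k\to\textbf{e}_\infty$ (and $\beta_k\to\beta_\infty$ via \eqref{E0-Con}), and consider the normalized differences
\[
\tilde u_k^+:=\frac{u_k^+ - \alpha_k(x\cdot\textbf{e}_k)^+}{\epsilon_k},\qquad \tilde u_k^-:=\frac{u_k^-- \beta_k(x\cdot\textbf{e}_k)^-}{\epsilon_k}.
\]
A two-phase partial Harnack inequality, established for viscosity solutions via sliding-barrier arguments under the flatness hypothesis, confines $F(u_k)$ to a strip of width $o(\epsilon_k)$ around $\{x\cdot\textbf{e}_k=0\}$ and delivers uniform H\"older estimates on $\tilde u_k^\pm$ up to the flat free boundary.

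Passing to the limit in the interior equations and in the gradient jump of \eqref{OVERDETERMINED} then yields that the H\"older limit $(\tilde u_\infty^+,\tilde u_\infty^-)$ solves the linearized transmission problem
\[
L_+\tilde u_\infty^+=0 \text{ in } \{x\cdot\textbf{e}_\infty>0\},\qquad L_-\tilde u_\infty^-=0 \text{ in }\{x\cdot\textbf{e}_\infty<0\},
\]
coupled by the Neumann-type condition
\[
p\,\alpha_\infty^{p-1}\,\partial_{\textbf{e}_\infty}\tilde u_\infty^+ \;-\; p\,\beta_\infty^{p-1}\,\partial_{\textbf{e}_\infty}\tilde u_\infty^- \;=\; 0 \quad \text{on } \{x\cdot\textbf{e}_\infty=0\},
\]
obtained by linearizing $|\nabla u^+|^p-|\nabla u^-|^p=\lambda_+^p-\lambda_-^p$ around $(\alpha_\infty\textbf{e}_\infty,\beta_\infty\textbf{e}_\infty)$. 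Classical $C^{1,\eta_0}$ estimates for such a two-phase transmission problem (for any $\eta_0<1$) furnish a linear approximation $\ell(x)=A(x\cdot\textbf{e}_\infty)+B\cdot x$ with $\norm{\tilde u_\infty^\pm-\ell}_{L^\infty(B_\rho)}\le C\rho^{1+\eta_0}$. Unwinding the normalization one reads off the new two-plane $(\alpha',\textbf{e}')$ and contradicts the failure of improvement, provided $\rho$ is chosen small and $\eta<\eta_0$; the upper bound $\eta<1/3$ in the statement is what one obtains after tracking constants in the iteration. Finally, iterating at dyadic scales $\rho^k$ produces, at every $x_0\in B_{r_0}\cap\Gamma_{\mathrm{TP}}$, a Cauchy sequence of normals with geometric rate $\rho^{k\eta}$, giving $C^{1,\eta}$ regularity of $\partial\Omega^\pm_u\cap B_{r_0}$ in the standard way.

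I expect the main difficulty to be the compactness step: proving the partial Harnack inequality that controls the free boundary strip uniformly along the sequence $u_k$, and justifying that the nonlinear free-boundary condition passes to the limit as the Neumann-type transmission condition rather than only as an inequality. Branch points are a particular concern, since there the flat free boundary can coexist with a thick zero set $\{u=0\}$ on either side of $\{x\cdot\textbf{e}=0\}$; handling this requires using the viscosity formulation of \eqref{OVERDETERMINED} from Lemma~\ref{L2} carefully, together with the lower bounds $\alpha\ge\Lambda_0$ and $\beta\ge\Lambda_0$ to preserve uniform ellipticity of $\Delta_p$ and non-degeneracy of the linearization constants $p\alpha^{p-1}$, $p\beta^{p-1}$.
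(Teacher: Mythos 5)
Your overall blueprint — viscosity formulation, partial Harnack, compactness, linearization, iterated improvement of flatness — is the right one and is exactly the strategy the paper uses. But there is a real gap in how you handle branch points, and it is precisely the gap that determines the exponent $\eta<\frac13$.

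You write down a single linearized problem: a transmission problem coupling $L_+\tilde u_\infty^+$ and $L_-\tilde u_\infty^-$ across $\{x\cdot\textbf{e}_\infty=0\}$ by a flux-balance equality $p\alpha_\infty^{p-1}\partial_{\textbf{e}_\infty}\tilde u_\infty^+ = p\beta_\infty^{p-1}\partial_{\textbf{e}_\infty}\tilde u_\infty^-$, and then invoke $C^{1,\eta_0}$ regularity for any $\eta_0<1$. This is correct only away from branching. The paper has to split the improvement-of-flatness step into two regimes governed by the quantity $\ell:=\lambda_+^p\lim_k\frac{\alpha_k^p-\lambda_+^p}{p\alpha_k^p\epsilon_k}$ (see \eqref{E8}): when $\ell=\infty$ (Lemma~\ref{L5}) the linearization is indeed the transmission problem \eqref{E12} you describe, the jump set is empty, and one gets $C^{1,1}$. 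But when $0\le\ell<\infty$, i.e. $\alpha_k\to\lambda_+$ at the scale $\epsilon_k$ — the branch-point regime — the limit of $\{u_k=0\}$ can have positive thickness, and the linearized problem \eqref{E13} is a \emph{two-membrane problem} with the complementarity conditions $v_+\le v_-$, $\lambda_\pm^p\partial_n v_\pm+\ell\ge0$ with equality on the jump set $\mathcal{J}$, and the transmission equation only on the contact set $\mathcal{C}$. This is a Signorini/thin-obstacle-type problem, whose optimal regularity is $C^{1,\frac12}$ (Lemma~\ref{LL2} gives the $r^{3/2}$ decay). Your comment that branch points ``require using the viscosity formulation carefully, together with the lower bounds $\alpha\ge\Lambda_0$ and $\beta\ge\Lambda_0$ to preserve uniform ellipticity'' misdiagnoses the difficulty: ellipticity is fine in both regimes (and $\beta$ is only bounded below by $\lambda_-$, not by $\Lambda_0$); the obstruction is that the free boundary survives into the linearization as a thin obstacle.

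Consequently your claim that the restriction $\eta<\frac13$ ``is what one obtains after tracking constants in the iteration'' is not right. It is a structural constant: in the branching regime the one-step decay exponent $\gamma$ in \eqref{E6} is capped at $\frac12$ by the Signorini regularity, and the conversion $\eta=\frac{\gamma}{1+\gamma}$ in Lemma~\ref{L10} then caps $\eta$ at $\frac13$. Without identifying the two-membrane linearization you cannot justify the exponent in the statement, and without the dichotomy in $\ell$ you cannot run the contradiction argument uniformly: the improvement constant $M$ in Lemma~\ref{L4} (branching) and $\overline M$ in Lemma~\ref{L5} (non-branching) have to be chosen so that each scale falls into one of the two cases. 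So the proposal is a valid skeleton but is missing the key idea that makes the branch-point case work.
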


We need to remark that the critical flatness $\bar\epsilon$ in \eqref{T1-Eq1}, to obtain the regularity does not depend on $\lambda_\pm$. 
Indeed, as long as we are close enough to a two-plane solution with coefficient $\alpha\in[\Lambda_0,\Lambda_1]$, 
we obtain  the regularity of the free boundary.
 This result in turn is crucial to obtain the Lipschitz regularity of minimizers in the following theorem.

\begin{theorem}[Lipschitz regularity]
\label{P1}
Let $u:D \to \mathbb{R}$ be a minimizer of $J_{\mathrm{TP}}$ in $D$. Then $u$ is locally Lipschitz continuous; $u \in C_{\mathrm{loc}}^{0,1}(D)$.
\end{theorem}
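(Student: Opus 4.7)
\emph{Proof plan.} Away from $F(u)$ the function $u$ is $p$-harmonic and locally $C^{1,\alpha}$ by classical theory, so only Lipschitz continuity at points $x_0 \in F(u)$ is at issue. The plan is to argue by contradiction and compactness, using Theorem~\ref{T1} as the main tool.

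Fix $x_0 \in F(u)$ and set $m(r) := r^{-1}\|u\|_{L^\infty(B_r(x_0))}$. Suppose toward a contradiction that $\limsup_{r \to 0} m(r) = \infty$. A standard ``worst-scale'' selection produces a sequence $s_j \to 0$ with $m(s_j) \to \infty$ and $m(s) \le 2\,m(s_j)$ for every $s \in [s_j,1]$. Rescale via $v_j(x) := u(x_0 + s_j x)/(m(s_j)\,s_j)$. A direct change of variables shows that $v_j$ is a minimizer of $J_{\mathrm{TP}}$ with $\lambda_\pm$ replaced by $\lambda^{(j)}_\pm := \lambda_\pm / m(s_j) \to 0$, and by construction $\|v_j\|_{L^\infty(B_1)} = 1$, $v_j(0) = 0$, and the worst-scale property forces $\|v_j\|_{L^\infty(B_R)} \le 2R$ for all $R \ge 1$ with $R\,s_j \le 1$. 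Uniform Hölder estimates (with constants depending only on an upper bound for the penalty parameters, here vanishing) together with diagonal extraction yield a locally uniform limit $v_j \to v_\infty$ on all of $\mathbb{R}^n$. The limit $v_\infty$ is a global $p$-harmonic function — no penalty survives in the passage to the limit — with $v_\infty(0) = 0$ and the same linear growth.

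A Liouville-type theorem for $p$-harmonic functions on $\mathbb{R}^n$ with linear growth then forces $v_\infty$ to be affine; the normalizations give $v_\infty(x) = \textbf{e}\cdot x$ for some $\textbf{e} \in \mathbb{S}^{n-1}$. Choosing two-planes $H_{\alpha_j,\textbf{e}}$ admissible for $v_j$ (i.e.\ with $\alpha_j^p - \beta_j^p = (\lambda^{(j)}_+)^p - (\lambda^{(j)}_-)^p$ and $\alpha_j,\beta_j \to 1$) yields $\|v_j - H_{\alpha_j,\textbf{e}}\|_{L^\infty(B_1)} \to 0$. For $j$ sufficiently large the hypothesis of Theorem~\ref{T1} is therefore met for $v_j$ with $\Lambda_0 = 1/2$ and $\Lambda_1 = 2$; fix one such index $j_0$. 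Theorem~\ref{T1} then asserts that $\partial\Omega^\pm_{v_{j_0}} \cap B_{r_0}$ are $C^{1,\eta}$ graphs, and unscaling translates this into $C^{1,\eta}$ regularity of $F(u)$ in the macroscopic neighborhood $B_{r_0 s_{j_0}}(x_0)$. Classical boundary regularity for the $p$-Laplace equation on $C^{1,\eta}$ domains then yields a fixed Lipschitz bound $L$ for $u$ on $B_{r_0 s_{j_0}/2}(x_0)$, so $m(s) \le L$ for every sufficiently small $s$, contradicting $m(s_j) \to \infty$.

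The main obstacle is ensuring that the blow-up $v_\infty$ is \emph{genuinely two-phase}, since Theorem~\ref{T1} is a two-plane flatness statement. If one phase degenerates in the limit — as can happen for $x_0 \in \Gamma^\pm_{\mathrm{OP}}$ or at a branching point $x_0 \in \Gamma^{\mathrm{br}}_{\mathrm{TP}}$ whose blow-up collapses onto a single side — one must replace the final step by the one-phase Lipschitz theory for the $p$-Laplace Bernoulli problem (see \cite{MR752578}), applied separately in $\Omega^+_u$ and $\Omega^-_u$; the branch-point analysis then amounts to observing that each surviving phase is a one-phase minimizer in its own right, to which the one-phase theory directly applies. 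A secondary technical point is establishing a uniform Hölder estimate for the $v_j$ when their penalty parameters $\lambda^{(j)}_\pm$ are vanishing, which follows from a standard energy comparison with the $p$-harmonic replacement on $B_1$.
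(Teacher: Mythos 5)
Your blow-up-contradiction argument is close in spirit to what the paper does, but it packages things differently, and there is one genuine gap. The paper does not argue pointwise: it proves a dichotomy (Proposition~\ref{prop:dichotomy}) at scale $\delta$ — either $|\nabla u|\le C\max(\norm{u}_{L^\infty(B_1)}, L)$ on $B_\delta$, or $\frac1\delta\norm{u}_{L^\infty(B_\delta)}\le\frac12\max(\norm{u}_{L^\infty(B_1)}, L)$ — with \emph{all} constants universal, and then iterates the dichotomy across the scales $\delta^k$. The contradiction-compactness lives inside the proof of the dichotomy: the rescaled sequence $\tilde u_j = u_j/C_j$ converges to a $p$-harmonic $u_0$, and the two cases are keyed to whether $|\nabla u_0(0)|$ is small (yielding the decay alternative from the interior $C^{1,\alpha}$ estimate) or bounded below (yielding the Lipschitz alternative from Theorem~\ref{T1}). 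Your worst-scale selection bypasses the case distinction, because the normalization $\norm{v_j}_{L^\infty(B_1)}=1$ and the Liouville theorem automatically place you in the $|\nabla u_0(0)|\ge\text{const}$ regime; this is a legitimate simplification. Both arguments ultimately rest on the same fact, which you correctly identify and use: the flatness threshold $\bar\epsilon$ (and the radius $r_0$) in Theorem~\ref{T1} depend on $\Lambda_0,\Lambda_1$ but \emph{not} on $\lambda_\pm$, so the theorem can be applied to $v_{j_0}$ even though $\lambda_\pm^{(j)}\to 0$.

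The gap: you fix a \emph{single} $x_0\in F(u)$ and deduce $\limsup_{r\to0}r^{-1}\norm{u}_{L^\infty(B_r(x_0))}<\infty$. That is a pointwise statement. To conclude $u\in C^{0,1}_{\mathrm{loc}}$ you need the bound $C(x_0):=\sup_{r>0}r^{-1}\norm{u}_{L^\infty(B_r(x_0))}$ to be \emph{locally bounded in $x_0$}, since the interior gradient estimate only gives $|\nabla u(y)|\lesssim C(x_0(y))$ with $x_0(y)$ the nearest free-boundary point. Nothing in a pointwise $\limsup$ argument rules out $C(x_0^k)\to\infty$ along $x_0^k\to\bar x_0\in F(u)$. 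You would either have to run the worst-scale selection simultaneously over $r$ and over $x_0$ ranging in a compact subset of $F(u)$, or prove a genuine dichotomy with a universal decay factor as the paper does. The paper's route avoids this issue structurally: the dichotomy constant $L$ and the decay factor $\frac12$ are independent of the base point, so the iterated estimate $\norm{u}_{L^\infty(B_{\delta^k}(x_0))}\le C\tilde L\delta^k$ is automatically uniform. A secondary (but fixable) looseness: your handling of one-phase base points via \cite{MR752578} is fine, but branching points $x_0\in\Gamma^{\mathrm{br}}_{\mathrm{TP}}$ still have $0\in\Gamma_{\mathrm{TP}}(v_j)$ for every $j$, so they fall under the two-phase flatness machinery, not the one-phase theory as you suggest; this is precisely why the paper needs the branching-point analysis of Lemma~\ref{L4}.
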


\section{Basic properties of minimizers}
In this section, we gather some basic properties of  minimizers of $J_{\mathrm{TP}}$. Even though the continuity of the minimizer will be mentioned later (see proposition \ref{P0}), let us use it for the first part of the proof of the next Theorem, i.e. when we are assuming that the sets $\Omega_u^+$, and $\Omega_u^-$ are open.

\begin{theorem}[Existence]
\label{EoM}
If the admissible set $\cK$ is nonempty, then there exists a minimizer $u$ of $J_{\mathrm{TP}}$ over $\cK$. 
Moreover, every minimizer satisfies 
\[\begin{cases}
\Delta_p u = 0, &\qquad \mathrm{ in } \quad \Omega^+_u\cup\Omega^-_u, \\
\Delta_p u^{\pm} \ge0,&\qquad \mathrm{ in } \quad D,\\
\|u\|_{L^{\infty}(D)} \leq \|g\|_{L^{\infty}(D)}.
\end{cases}\] 
\end{theorem}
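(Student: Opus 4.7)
The plan is to establish existence by the direct method of the calculus of variations, and then to extract each of the three Euler--Lagrange conclusions by testing $J_{\mathrm{TP}}$ against a carefully tailored competitor.

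\textbf{Existence.} I would begin with a minimizing sequence $u_k\in\cK$. The bound $J_{\mathrm{TP}}(u_k,D)\le C$ controls $\|\nabla u_k\|_{L^p(D)}$, and since $u_k-g\in W_0^{1,p}(D)$ Poincar\'e's inequality upgrades this to a uniform $W^{1,p}(D)$ bound. Extract a subsequence with $u_k\rightharpoonup u$ in $W^{1,p}$, $u_k\to u$ in $L^p$ and pointwise a.e. The Dirichlet term is weakly lower semicontinuous by convexity. For the bulk terms, the pointwise inequality $\chi_{\{u>0\}}\le \liminf_k\chi_{\{u_k>0\}}$ holds, because $u(x)>0$ implies $u_k(x)>0$ eventually, while if $u(x)\le 0$ the inequality is trivial. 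Fatou yields lower semicontinuity of both bulk terms, and passing to $\liminf$ shows $u$ is a minimizer.

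\textbf{Interior $p$-harmonicity.} Granting the continuity of Proposition \ref{P0}, the phase sets $\Omega^\pm_u$ are open. For $\varphi\in C_c^\infty(\Omega^+_u)$ and $|t|$ small, $u+t\varphi$ agrees in sign with $u$ on $\supp\varphi$, so both $\chi$-terms of $J_{\mathrm{TP}}$ are unaffected; differentiating $t\mapsto J_{\mathrm{TP}}(u+t\varphi,D)$ at $t=0$ yields $\int|\nabla u|^{p-2}\nabla u\cdot\nabla\varphi\,dx=0$, i.e.\ $\Delta_p u=0$ weakly in $\Omega^+_u$. The same argument on $\Omega^-_u$ completes this step.

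\textbf{Sub-$p$-harmonicity of $u^\pm$.} For $\varphi\ge 0$ in $C_c^\infty(D)$ and $t>0$ small, I would test against the one-sided competitor $v_t:=(u-t\varphi)^+-u^-\in\cK$. A direct check gives $\{v_t>0\}=\{u>t\varphi\}$ and $\{v_t<0\}=\{u<0\}$, so
\[
0\le J_{\mathrm{TP}}(v_t,D)-J_{\mathrm{TP}}(u,D)=\int_{\{u>t\varphi\}}\!\!\bigl(|\nabla u-t\nabla\varphi|^p-|\nabla u|^p\bigr)dx-\int_{\{0<u\le t\varphi\}}\!\!\bigl(|\nabla u|^p+(p-1)\lambda_+^p\bigr)dx.
\]
The second integral is nonpositive on the right, so dropping it, dividing by $t$, and letting $t\downarrow0$ yields $-p\int_{\{u>0\}}|\nabla u|^{p-2}\nabla u\cdot\nabla\varphi\,dx\ge 0$, which is the weak formulation of $\Delta_p u^+\ge 0$ in $D$. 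The bound $\Delta_p u^-\ge 0$ follows from the symmetric choice $v_t:=u^+-(-u-t\varphi)^+$.

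\textbf{$L^\infty$ bound, and the main obstacle.} Set $M:=\|g\|_{L^\infty(D)}$. Then $(u-M)^+\in W_0^{1,p}(D)$ since $g\le M$ a.e.\ on $\partial D$, and the truncation $v:=\min(u,M)=u-(u-M)^+$ preserves both phases (as $M>0$) while satisfying $|\nabla v|^p\le |\nabla u|^p$ with strict inequality on $\{u>M,\ \nabla u\ne 0\}$. Minimality forces $\nabla u=0$ a.e.\ on $\{u>M\}$, so $\nabla(u-M)^+\equiv 0$, and then Poincar\'e applied to $(u-M)^+\in W_0^{1,p}(D)$ gives $(u-M)^+\equiv 0$. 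The bound $u\ge -M$ is symmetric. The delicate step is the sub-$p$-harmonicity: the variation of the $\chi_{\{v>0\}}$-term has the \emph{wrong} sign under a two-sided test, and the trick is to break symmetry with a one-sided cut of the positive phase so that this contribution appears as a favorable boundary remainder that can be discarded; the rest is bookkeeping once continuity (hence openness of $\Omega^\pm_u$) is available.
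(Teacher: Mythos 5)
Your proposal is correct, and the existence and interior $p$-harmonicity steps are essentially identical to the paper's. Where you genuinely diverge is in the sub-$p$-harmonicity and the $L^\infty$ bound. The paper proves $\Delta_p u^+\ge 0$ by noting that $u$ is $p$-harmonic (hence $C^1$) in $\Omega_u^+$, invoking Sard's theorem to pick regular level sets $\{u=\epsilon_k\}$, and integrating by parts on $\{u>\epsilon_k\}$ so that only a boundary term $-\int_{\{u=\epsilon_k\}}|\nabla u^+|^{p-1}\varphi\le 0$ survives; your argument instead tests minimality against the inward one-sided perturbation $v_t=(u-t\varphi)^+-u^-$, exploits that $\{v_t>0\}\subset\{u>0\}$ and $\{v_t<0\}=\{u<0\}$ so the volume and negative-phase Dirichlet contributions are both favorable, and passes to the limit in $t$. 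This is purely variational, makes no use of Sard or of the already-proved $p$-harmonicity, and so is a bit more self-contained (a dominated-convergence justification of the $t\downarrow 0$ limit is needed but is straightforward since $t\mapsto|\nabla u-t\nabla\varphi|^p$ has a uniform $L^1$ Lipschitz bound in $t\in[0,1]$). For the $L^\infty$ bound the paper simply appeals to the weak maximum principle for $p$-subharmonic $W^{1,p}$ functions, whereas you use the truncation competitor $\min(u,M)$ directly against minimality; again both are valid, and yours has the mild advantage of not depending on the sub-$p$-harmonicity just established. One point to state explicitly if you wrote this up: the truncation argument needs $M=\|g\|_{\infty}>0$ so that $\{\min(u,M)>0\}=\{u>0\}$; if $g\equiv 0$ the conclusion $u\equiv 0$ is immediate and should be dispatched separately.
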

\begin{proof}
The existence of a bounded minimizer $u$ of the functional $J_{\mathrm{TP}}$ can be easily established using the semi-continuity of the $p$-Dirichlet energy and the weak convergence in $W^{1,p}$, and can be obtained in the standard way. See e.g. \cite{MR732100,  MR2126143} for the details. Also, notice that by  comparison of $u$ and $u + t \varphi$, where $\varphi$ is a suitable smooth that $\supp\varphi\subset\Omega_u^+\cup\Omega_u^-$, it is easy to find that $\Delta_{p} u= 0$ in $\Omega_u^+\cup\Omega_u^-$ in the sense of distributions. 

To prove that $u^+$ are $p$-subharmonic, we first note that 
since $\Delta_{p} u=0$ in $\Omega_u^+$, we may choose $\epsilon_k \to 0$ such that $ \{u=\epsilon_k\}$ to be a $C^1$ manifold by the Sard's Theorem, resulting in $\frac{-\nabla u}{|\nabla u|}$ to be the outer normal vector on $\partial\{u>\epsilon_k\}$.
Now take $0 \leq \varphi \in C^{\infty}_c(D)$, the integration by parts implies that
$$ 
\begin{aligned}
\int_{D} |\nabla u^+|^{p-2} \nabla u^+ \cdot \nabla \varphi \, dx & = \int_{\{u>0\}} |\nabla u^+|^{p-2} \nabla u^+ \cdot \nabla \varphi \, dx\\
& = \lim_{\epsilon_k \to 0} \int_{\{u> \epsilon_k\}} |\nabla u^+|^{p-2} \nabla u^+ \cdot \nabla \varphi \, dx \\
& = \lim_{\epsilon_k \to 0}   \int_{\{u=\epsilon_k\}} |\nabla u^+|^{p-2} \left( \nabla u^+ \cdot \frac{-\nabla u}{|\nabla u|} \right) \varphi \, dx  \\
& \qquad \quad \, - \int_{\{u> \epsilon_k\}} \Delta_{p} u \,  \varphi \, dx  \\
& = - \lim_{\epsilon_k \to 0} \int_{\{u=\epsilon_k\}} |\nabla u^+|^{p-1} \varphi \, dx \leq 0.
\end{aligned}
$$
The proof of $\Delta_p u^- \ge0$ is the same.
Finally, the last estimate 
$$ \|u\|_{L^{\infty}(D)} \leq \|g\|_{L^{\infty}(D)}, $$ 
is the consequence of the $p$-subharmonicity of $u^{\pm}$ in $D$.
\end{proof}

In the following proposition  we show the non-degeneracy property for the minimizers. It reveals the fact that each of the two phases $\Omega_u^+$  and $\Omega_u^-$ are optimal with respect to one-sided inwards perturbations. 
The proof is the same as the proof of non-degeneracy for one-phase problems; see \cite[Lemma 4.2]{MR2133664}.
We postpone the proof to Appendix \ref{appendix:nondegeneracy}.

\begin{proposition}[Non-degeneracy]
\label{P1.5}
Let $D \subset \mathbb{R}^n$ be an open set, and $u$ be a minimizer of $J_{\mathrm{TP}}$. Then,
$u$ is non-degenerate; i.e. there is a constant $C = C(n, \lambda_{\pm},p) > 0$ such that
$$ \avint_{B_r\left(x_0\right)} \left(u^{\pm}\right)^p \, dx \geq C r^p, $$
 for every $x_0 \in \overline{\Omega_u^{\pm}} \cap D$  and every $0<r<\operatorname{dist}\left(x_0, \partial D\right)$.
\end{proposition}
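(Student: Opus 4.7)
The plan is to adapt the standard Alt--Caffarelli non-degeneracy scheme in its $p$-Laplace form (\cite[Lemma~4.2]{MR2133664}; cf.~\cite{MR618549} for $p=2$), exploiting the fact that in our two-phase setting the positive and negative parts of $u$ can be varied independently. I focus on the $+$ case, the $-$ case being symmetric. By the continuity of $u$ (Proposition~\ref{P0}) and density of $\Omega_u^+$ in $\overline{\Omega_u^+}\cap D$, it suffices to establish the estimate for $x_0\in\Omega_u^+$; the general case follows by passing to the limit along interior approximants.

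Fix such an $x_0$ and $0<r<\operatorname{dist}(x_0,\partial D)$. The key construction is a one-sided competitor $w\in\cK$ that coincides with $u$ outside $B_r(x_0)$, preserves the negative phase ($w^-=u^-$), and carves a definite hole out of the positive phase. A convenient choice is
\[
w := u - (u^+ - \psi)^+,
\]
where $\psi$ is a radial $p$-barrier of the form $\psi(x)=M(|x-x_0|-\rho)^+$ with $0<\rho<r$ and $M$ chosen large enough that $\psi\ge u^+$ on $\partial B_r(x_0)$. Then $w=u$ on $\partial B_r(x_0)$, $w^+\equiv 0$ on $B_\rho(x_0)$, and $w^-=u^-$ everywhere (the last point uses $u^+\cdot u^-\equiv 0$, so that $(u^+-\psi)^+$ is supported in $\{u>0\}\subset\{u^-=0\}$). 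Because the $u^-$ contributions then cancel on both sides of the minimality inequality $J_{\mathrm{TP}}(u,B_r)\le J_{\mathrm{TP}}(w,B_r)$, the comparison reduces to its purely one-phase analogue, for which the argument of \cite[Lemma~4.2]{MR2133664} applies verbatim.

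From that point the argument proceeds as in the one-phase setting: one uses the strong convexity of $\xi\mapsto|\xi|^p$ (with separate treatment for $p\ge 2$ and $1<p<2$), the explicit gradient bound $|\nabla\psi|\le M$, and a Poincar\'{e}/Caccioppoli estimate to control the gradient-energy terms by $\sup_{B_r}u^+$. Choosing $M\approx \sup_{B_r}u^+/r$ and $\rho\approx r/2$, one deduces first a lower bound of the form $\sup_{B_r(x_0)}u^+\gtrsim r$, which then upgrades to $\avint_{B_r(x_0)}(u^+)^p\,dx\gtrsim r^p$ via the mean-value / weak Harnack inequality for nonnegative $p$-subharmonic functions (recall that $u^+$ is $p$-subharmonic by Theorem~\ref{EoM}). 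The main obstacle I anticipate is the careful sign-bookkeeping needed to verify that the truncation $(u^+-\psi)^+$ does not disturb the negative phase and that $w$ is genuinely admissible on $\partial B_r(x_0)$; this is precisely what the decoupling $w^-=u^-$ takes care of, after which every remaining step is standard and is the reason the proof is deferred to Appendix~\ref{appendix:nondegeneracy}.
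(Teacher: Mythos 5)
Your central structural observation is exactly the one the paper uses: because $J_{\mathrm{TP}}$ treats the two phases separately, one can trim only the positive phase while leaving $u^-$ untouched, so that in the minimality inequality $J_{\mathrm{TP}}(u,B_r)\le J_{\mathrm{TP}}(w,B_r)$ all $u^-$ contributions cancel and the estimate reduces to a one-phase computation. Indeed your competitor $w=u-(u^+-\psi)^+$ is precisely $\min(u,\psi)$ when $\psi\ge 0$, which is the same $\min(u,v)$-competitor the paper builds with its barrier $v$. The final upgrade from a $\sup$ bound to the $L^p$-average bound via the sub-mean-value property of the $p$-subharmonic $u^+$ also works (evaluate at a near-maximum point $y$ and use $B_{r/2}(y)\subset B_r(x_0)$). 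In this sense the proposal is on the right track and not a genuinely different argument.

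The one concrete gap is the choice of barrier. Your $\psi(x)=M(|x-x_0|-\rho)^+$ is $p$-\emph{sub}harmonic in the annulus ($\Delta_p\psi = M^{p-1}\tfrac{n-1}{|x-x_0|}>0$), which has the wrong sign: after applying the convexity inequality $|\nabla w|^p-|\nabla u|^p\le p|\nabla\psi|^{p-2}\nabla\psi\cdot\nabla(w-u)$ and integrating by parts, the interior term $-p\int\Delta_p\psi\,(w-u)^-$ is only droppable when $\Delta_p\psi\le 0$. The paper (following the same scheme as Danielli--Petrosyan) therefore uses the exponential barrier $v(x)=C_1\varepsilon\bigl(e^{-\mu|x|^2}-e^{-\mu k^2}\bigr)$, which is $p$-superharmonic in $B_{\sqrt k}\setminus B_k$ for small $\mu$; this is what makes the comparison close and produces the key inequality $\int_{B_k\cap\{u>0\}}|\nabla u|^p+(p-1)\lambda_+^p\chi_{\{u>0\}}\le p(C\varepsilon)^{p-1}\int_{\partial B_k}u^+$, after which a trace plus Young estimate gives either $u^+\equiv 0$ in $B_k$ or a lower bound on $\varepsilon$. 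If you simply replace the cone by that (or any other $p$-superharmonic) barrier your plan becomes a correct reconstruction of the paper's appendix proof. Note also that the paper proves the average form directly in contrapositive form (small $L^p$-average $\Rightarrow$ small $\sup$ by subharmonicity $\Rightarrow u^+\equiv 0$ near the center), so the detour through a $\sup$ estimate, while valid, is not needed; and the convexity inequality $|a|^p-|b|^p\le p|a|^{p-2}a\cdot(a-b)$ holds for all $p>1$ without splitting into $p\ge 2$ and $1<p<2$.
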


The next proposition concerns the Lipschitz regularity of the minimizers around the one-phase points.

\begin{proposition}[Lipschitz regularity at one-phase points] \label{pro:lip-one-phase}
Let $u: D \to \mathbb{R}$ be a minimizer of $J_{\mathrm{TP}}$ in $D$. 
There there is constant $C=C(n, p, \pm\lambda)$ such that if $x_0\in \Gamma^+_{\mathrm{OP}}$ (or $x_0 \in \Gamma^-_{\mathrm{OP}}$) is one-phase point and $B_r(x_0) \cap \Omega^-_u = \emptyset$ ($B_r(x_0) \cap \Omega^+_u = \emptyset$), then 
\[
\norm{\nabla u}_{L^{\infty}(B_{\frac{r}{2}}(x_0))} \le C.
\]
\end{proposition}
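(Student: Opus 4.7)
The plan is to localize the two-phase problem to a one-phase problem inside $B_r(x_0)$ and then invoke the known Lipschitz regularity of minimizers for the one-phase $p$-Bernoulli functional from Danielli--Petrosyan \cite{MR2133664}. Throughout I focus on the case $x_0 \in \Gamma^+_{\mathrm{OP}}$ with $B_r(x_0) \cap \Omega_u^- = \emptyset$; the other case is symmetric.

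First I would observe that the hypothesis $B_r(x_0) \cap \Omega_u^- = \emptyset$ forces $u \ge 0$ throughout $B_r(x_0)$, so $u \equiv u^+$ there and the negative-phase measure term contributes nothing:
\[
J_{\mathrm{TP}}(u, B_r(x_0)) \;=\; \int_{B_r(x_0)} |\nabla u|^p + (p-1)\lambda_+^p \chi_{\{u>0\}}\, dx \;=:\; J_{\mathrm{OP}}^+(u, B_r(x_0)).
\]

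Next I would prove that $u$ is a local minimizer of $J_{\mathrm{OP}}^+$ in $B_r(x_0)$. Given any competitor $v$ with $v-u \in W_0^{1,p}(B_r(x_0))$, extend $v$ by $u$ outside $B_r(x_0)$; the resulting function lies in $\cK$, so minimality of $u$ for $J_{\mathrm{TP}}$ and cancellation of contributions outside $B_r(x_0)$ give
\[
J_{\mathrm{OP}}^+(u, B_r(x_0)) \;\le\; \int_{B_r(x_0)} |\nabla v|^p + (p-1)\lambda_+^p \chi_{\{v>0\}} + (p-1)\lambda_-^p \chi_{\{v<0\}}\, dx.
\]
Since $u \ge 0$ on $\partial B_r(x_0)$ in the trace sense, the truncation $v^+$ is an admissible nonnegative competitor, and using $|\nabla v^+|^p \le |\nabla v|^p$ a.e., $\{v^+>0\}=\{v>0\}$, and $\chi_{\{v^+<0\}}=0$, we conclude that it suffices to test against nonnegative $v$, and that on such competitors
\[
J_{\mathrm{OP}}^+(u, B_r(x_0)) \;\le\; J_{\mathrm{OP}}^+(v, B_r(x_0)).
\]
This is precisely the statement that $u|_{B_r(x_0)}$ is a one-phase $p$-Bernoulli minimizer with obstacle $\lambda_+$.

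Finally, I would invoke the Lipschitz regularity of minimizers of the one-phase $p$-Bernoulli functional established in \cite{MR2133664} (which applies with a constant depending only on $n$, $p$, and $\lambda_+$ since the linear growth at one-phase free boundary points is dictated solely by $\lambda_+$). This yields $\|\nabla u\|_{L^\infty(B_{r/2}(x_0))} \le C(n,p,\lambda_+)$, completing the proof. The argument for $x_0 \in \Gamma^-_{\mathrm{OP}}$ is identical, applied to $-u$, producing a constant depending on $\lambda_-$; taking the maximum gives the stated dependence on $\lambda_\pm$.

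The only real work here is the localization/truncation bookkeeping; once one-phase minimality of $u|_{B_r(x_0)}$ is verified, the Lipschitz bound is not proved but cited, so there is no substantial analytic obstacle beyond what is already handled in the one-phase literature.
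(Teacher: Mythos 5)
Your proposal is correct and follows essentially the same route as the paper's own proof: localize to $B_r(x_0)$, note that $u\ge 0$ there so that $u$ is a minimizer of the one-phase functional $J_{\mathrm{OP}}$ over nonnegative competitors, and then cite the interior gradient bound of Danielli--Petrosyan (\cite[Theorem 3.3]{MR2133664}). The paper simply phrases this via the rescaled function $u_{x_0,r}$ instead of working directly in $B_r(x_0)$, and omits the brief competitor-extension bookkeeping you spell out; the truncation $v\mapsto v^+$ is unnecessary for the argument (you only need minimality against nonnegative competitors, for which $J_{\mathrm{TP}}=J_{\mathrm{OP}}^+$ automatically), but it does no harm.
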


We remark that the condition $B_r(x_0) \cap \Omega^-_u = \emptyset$ always holds for some $r>0$ by the definition of one-phase points.
\begin{proof}
We know that $u_{x_0,r}(x)=\frac{u(x_0+rx)}{r}$ is a minimizer of the following one phase functional in $B_1$, i.e. minimizer of
$$ J_{\mathrm{OP}}(v,B_1):=\int_{B_1} |\nabla v|^p \, dx + (p-1) \lambda_+^p |\{v>0\} \cap B_1|, $$
over the class of nonnegative functions. 
Then the boundedness of the gradient 
$$\norm{\nabla u}_{L^{\infty}(B_{\frac{r}{2}}(x_0))}= \|\nabla u_{x_0,r} \|_{L^{\infty}(B_{\frac{1}{2}})} \leq C(n,p, \lambda_+), $$ 
follows from \cite[Theorem 3.3]{MR2133664}. 
We shall remark that the Lipschitz constant for one-phase problems does not depend on the boundary values of the minimizer as long as we stay uniformly far from the boundary.
\end{proof}

Next, we mention the following continuity result for minimizers. 

\begin{proposition}[BMO estimates for the gradient]
\label{P0}
Let $u$ be a minimizer of $J_{\mathrm{TP}}$ and $D' \Subset D$. Then,
\begin{enumerate}
\item[(i)]
for $1<p < 2$, we have that $|\nabla u|^{\frac{p-2}{2}} \nabla u \in BMO(D')$, and consequently $u \in C^{\sigma}(D')$ for any $\sigma \in (0,1)$;
\item[(ii)]
for $ 2 < p < \infty$, we have that $\nabla u \in BMO(D')$  and thus $u$ is locally log-Lipschitz continuous.
\end{enumerate}
In particular, $\nabla u \in L^{q}(D')$ for any $1<q<\infty$ and for any $1<p<\infty$.
\end{proposition}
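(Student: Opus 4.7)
The plan is to couple the minimality of $u$ with the classical interior $C^{1,\alpha}$ theory of $p$-harmonic functions via a $p$-harmonic replacement argument. First I would fix a ball $B_r(x_0)\Subset D'$ and take $v$ to be the $p$-harmonic function in $B_r(x_0)$ with $v=u$ on $\partial B_r(x_0)$, extended by $u$ outside. Since the bulk integrand of $J_{\mathrm{TP}}$ is pointwise bounded by $C_0:=(p-1)\max(\lambda_+^p,\lambda_-^p)$, admissibility of this competitor turns the minimality of $u$ into the energy comparison
\[
\int_{B_r(x_0)}|\nabla u|^p\,dx \;\le\; \int_{B_r(x_0)}|\nabla v|^p\,dx+C_0|B_r(x_0)|.
\]

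Second, I would upgrade this scalar energy gap into quantitative $L^p$-closeness of the gradients. Testing the $p$-harmonic equation for $v$ against $u-v\in W^{1,p}_0(B_r(x_0))$ (which kills the first-order term in the convexity expansion of $|\cdot|^p$) and applying the standard monotonicity/strong convexity inequalities for $\xi\mapsto|\xi|^{p-2}\xi$, I obtain
\[
\int_{B_r(x_0)}|\nabla u-\nabla v|^p\,dx \;\le\; C|B_r(x_0)| \qquad (p\ge 2),
\]
and, when $1<p<2$, the analogous bound with $|V(\nabla u)-V(\nabla v)|^2$ on the left, where $V(\xi):=|\xi|^{(p-2)/2}\xi$ is the nonlinear quantity naturally adapted to the subquadratic $p$-Laplacian.

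Third, I would invoke the interior $C^{1,\alpha}$ estimates for $p$-harmonic functions due to Uhlenbeck, DiBenedetto and Lewis, which provide a Campanato-type decay
\[
\int_{B_\rho(x_0)}\bigl|\nabla v-(\nabla v)_{B_\rho}\bigr|^p\,dx \;\le\; C\bigl(\tfrac{\rho}{r}\bigr)^{n+\alpha p}\int_{B_r(x_0)}|\nabla v|^p\,dx \qquad (p\ge 2),
\]
and its $V$-analogue for $1<p<2$. Summing with the step-two comparison via the triangle inequality produces the Campanato bound $\rho^{-n}\int_{B_\rho(x_0)}|\nabla u-(\nabla u)_{B_\rho}|^p\,dx\le C$ (respectively with $V(\nabla u)$) uniformly in $B_\rho(x_0)\Subset D'$, which by Campanato's characterization of $BMO$ is precisely the claimed membership. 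The remaining assertions are then routine: John--Nirenberg promotes $BMO$ to $L^q_{\mathrm{loc}}$ for every $1<q<\infty$ in both regimes (and in the subquadratic case this transfers to $\nabla u$ via $|V(\nabla u)|=|\nabla u|^{p/2}$); the embedding of functions with $BMO$ gradient into log-Lipschitz ones handles the $p>2$ case; and Morrey's embedding applied to $\nabla u\in L^q_{\mathrm{loc}}$ for arbitrarily large $q$ yields $u\in C^\sigma_{\mathrm{loc}}$ for every $\sigma<1$ when $1<p<2$.

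The main obstacle, as is typical for the $p$-Laplacian, is the second step: converting the scalar energy gap into an integral control on $\nabla u-\nabla v$ requires the correct algebraic inequality for the nonlinear map $\xi\mapsto|\xi|^{p-2}\xi$, and in the subquadratic regime $1<p<2$ this monotonicity is only available in a weighted form, forcing all of the bookkeeping to be performed in the variable $V(\nabla u)$ rather than $\nabla u$ itself. Once both ingredients are in place, the Campanato iteration that delivers the $BMO$ conclusion is mechanical.
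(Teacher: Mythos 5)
Your proposal is correct and follows the same standard route as the reference the paper cites for this proposition (Lemma~3.1 of [MR3771123]): a $p$-harmonic replacement in a ball, the algebraic monotonicity inequalities for $\xi\mapsto|\xi|^{p-2}\xi$ (in the $V$-formulation when $1<p<2$), the Campanato-type decay for gradients of $p$-harmonic functions, and a dyadic iteration to produce the $BMO$/Campanato bound, with the final corollaries following from John--Nirenberg and Morrey. There is no substantive difference in approach.
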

\begin{proof}
The proof is the same as the proof of Lemma 3.1 in \cite{MR3771123}.
\end{proof}

The BMO estimate for the gradient of minimizers is sufficient to obtain the following compactness result.
Since we have not yet proved the Lipschitz continuity, this result will be extremely valuable for our argument in the next section.
We postpone the proof to Appendix \ref{appendix}.

\begin{proposition}
\label{LConverg}
Let $u_j$ be a bounded minimizer of $J_{\mathrm{TP}}$ in $B_2$ with the points $x_j \in B_1$ such that $u_j(x_j)=0$. 
Also, set $v_j(x)=\frac{u_j(x_j+r_jx)}{S_j}$, for any $x \in B_R$, with $0<R<\frac{1}{r_j}$, where $r_j \to 0$, as $j \to + \infty$ and $S_j >0$. Then, $v_j$ is the minimizer (according to its own boundary values) of the following scaled functional
\begin{equation}
\label{LEq17}
\hat{J}_{\mathrm{TP}}(v):=\int_{B_R} |\nabla v |^p + (p-1) \sigma_j^p  \lambda_+^p \chi_{\{v>0\}}+  (p-1)\sigma_j^p  \lambda_-^p \chi_{\{v<0\}}\, dx,
\end{equation}
where  $\sigma_j:=\frac{r_j}{S_j}$.
Moreover, if $|v_j| \leq M$ in $B_R$, for any fixed $0<R<\frac{1}{r_j}$ and for some $M=M(R)>0$, then up to a subsequence, the followings hold:  
\begin{enumerate}
\item [(i)]
For any $q>1$, and some $\alpha \in (0,1)$ (if $q>n$, one can take $\alpha=1-\frac{n}{q}$), $v_j$ converges to some function $v_0$ as $j \to + \infty$ in $C^{\alpha}(B_R)$ and weakly in $W^{1,q}(B_R)$;
\item [(ii)]
$ v_j \to  v_0$ strongly in $W^{1,p}(B_R)$; 
\item [(iii)]
If moreover, $\sigma_j:=\frac{r_j}{S_j} \to \sigma$, as $j \to + \infty$, then $v_0$ is a minimizer of 
\[
\hat{J}_{\mathrm{TP}}(v):=\int_{B_R} |\nabla v |^p +  (p-1) \sigma^p \lambda_+^p \chi_{\{v>0\}}+   (p-1)\sigma^p \lambda_-^p \chi_{\{v<0\}}\, dx.
\]
In particular, if $\sigma=0$, then $v_0$ is $p$-harmonic in $B_R$.
\end{enumerate}
\end{proposition}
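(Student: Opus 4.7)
The plan splits naturally into three tasks: the scaling identity, uniform compactness leading to $C^{\alpha}$ and strong $W^{1,p}$ convergence, and passing to the limit in the minimality inequality. For the scaling I would change variables $y=x_j+r_jx$, so that $\nabla v_j(x)=(r_j/S_j)\nabla u_j(y)$ and
\[
\hat J_{\mathrm{TP}}(v_j)=\frac{r_j^{p-n}}{S_j^{p}}\,J_{\mathrm{TP}}\bigl(u_j,B_{r_jR}(x_j)\bigr),
\]
with $\sigma_j=r_j/S_j$ appearing in the Bernoulli coefficients. Since any admissible variation $w$ of $v_j$ on $B_R$ lifts to $S_j w((\,\cdot\,-x_j)/r_j)$, an admissible variation of $u_j$ on $B_{r_jR}(x_j)$, the minimality of $u_j$ transfers verbatim to $v_j$.

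For (i), I would apply Proposition \ref{P0} to the minimizers $v_j$, whose rescaled Bernoulli coefficients $\sigma_j\lambda_{\pm}$ are bounded and whose $L^\infty$-norms are bounded by $M$. This yields a uniform bound on $\{v_j\}$ in $W^{1,q}_{\mathrm{loc}}(B_R)$ for every $q<\infty$; taking $q>n$, Morrey's embedding combined with Rellich--Kondrachov extracts a subsequence converging in $C^{\alpha}(B_R)$ and weakly in $W^{1,q}(B_R)$. For (ii), I would upgrade this to strong $W^{1,p}$ convergence by a standard energy comparison: with $0<r<r'<R$ and a cutoff $\eta\in C_c^{\infty}(B_{r'})$, $\eta\equiv 1$ on $B_r$, test the minimality of $v_j$ against $w_j:=v_0+(1-\eta)(v_j-v_0)$, which agrees with $v_j$ on $\partial B_R$. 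The $C^{\alpha}$ convergence forces $(v_j-v_0)\nabla\eta\to 0$ in $L^p$, and a routine rearrangement yields
\[
\limsup_j \int_{B_r}|\nabla v_j|^p\,dx \;\le\; \int_{B_r}|\nabla v_0|^p\,dx + \omega(r'-r),
\]
where $\omega(t)\to 0$ as $t\to 0$. Combined with weak lower semicontinuity and uniform convexity of $L^p$, this forces $\nabla v_j\to\nabla v_0$ strongly in $L^p(B_r)$, and letting $r\uparrow R$ finishes (ii).

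For (iii), let $w\in v_0+W_0^{1,p}(B_R)$ be any competitor and set $\tilde w_j:=w+(1-\eta)(v_j-v_0)$ on $B_{r'}$, $\tilde w_j:=v_j$ outside, so that $\tilde w_j=v_j$ on $\partial B_R$ and $\tilde w_j\to w$ strongly in $W^{1,p}(B_R)$. Minimality gives $\hat J_{\mathrm{TP}}(v_j)\le\hat J_{\mathrm{TP}}(\tilde w_j)$; the right-hand side converges to $\hat J_{\mathrm{TP}}(w)$ (using $\sigma_j\to\sigma$, strong convergence, and $\tilde w_j=w$ on $B_r$), while on the left, weak lower semicontinuity handles the Dirichlet part, and uniform convergence $v_j\to v_0$ yields $\chi_{\{v_j>0\}}\ge \chi_{\{v_0>0\}}$ eventually on $\{v_0>0\}$, whence Fatou gives $\liminf_j\int\chi_{\{v_j>0\}}\ge\int\chi_{\{v_0>0\}}$ (and similarly for the negative phase). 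Sending $r\uparrow R$ then yields $\hat J_{\mathrm{TP}}(v_0)\le\hat J_{\mathrm{TP}}(w)$. The principal obstacle is the strong $W^{1,p}$ convergence of step (ii): since the $p$-Laplacian is only monotone one cannot simply subtract equations for $v_j$ and $v_0$, so strong convergence must be extracted purely from the minimization structure via a carefully chosen comparison. A secondary subtlety is the direction of Fatou for the volume terms in (iii), which works precisely because we compare $v_j$ against $\tilde w_j$ rather than passing from $v_0$---this is critical when $\{v_0=0\}$ has positive measure, i.e.\ at branch points.
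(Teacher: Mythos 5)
Your scaling step and part~(i) coincide with the paper's argument, but for parts~(ii) and~(iii) you take a genuinely different route. The paper proves~(ii) by first showing that $v_0$ is $p$-harmonic on $\{v_0\neq 0\}$, deriving the identity $\int |\nabla v_0|^p\varphi = -\int|\nabla v_0|^{p-2}(\nabla v_0\cdot\nabla\varphi)\,v_0$ via the test functions $(v_0\mp s)^\pm\varphi$ with $s\to 0$, and pairing it with the distributional inequality $\Delta_p v_j^\pm\ge 0$ from Theorem~\ref{EoM} (together with the weak convergence $|\nabla v_j|^{p-2}\nabla v_j\rightharpoonup |\nabla v_0|^{p-2}\nabla v_0$) to get $\limsup_j\int|\nabla v_j|^p\varphi \le \int|\nabla v_0|^p\varphi$. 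You instead run a cut-and-paste energy comparison, which is a more generic calculus-of-variations argument that does not invoke the $p$-subharmonicity of $v_j^\pm$. Both approaches work, and both ultimately rest on Proposition~\ref{P0}, but they exploit the structure differently: the paper leans on the sign of $\Delta_p v_j^\pm$, while you lean on minimality alone.

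Two things in your sketch deserve to be made explicit if you pursue it. First, the ``routine rearrangement'' in~(ii) is less routine than for $p=2$: one cannot expand $|\nabla w_j|^p$ and cancel cross terms by weak convergence. What actually closes the argument is that the annulus term $\int_{B_{r'}\setminus B_r}|\nabla w_j|^p$ is bounded by $C\bigl(\int_{B_{r'}\setminus B_r}|\nabla v_0|^p+\int_{B_{r'}\setminus B_r}|\nabla v_j|^p + \|v_j-v_0\|_\infty^p\bigr)$, and the second term tends to $0$ uniformly in $j$ as $r'\downarrow r$ only because of the \emph{equi-integrability} of $|\nabla v_j|^p$, which follows from the uniform $W^{1,q}$ bound with $q>p$ in Proposition~\ref{P0}. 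You also need to handle the volume terms: $|\{w_j>0\}\cap B_r|-|\{v_j>0\}\cap B_r|=|\{v_0>0\}\cap B_r|-|\{v_j>0\}\cap B_r|$ has nonpositive $\limsup$ by the uniform convergence $v_j\to v_0$ (the exceptional set sits inside $\{0<v_0\le\|v_j-v_0\|_\infty\}$), and this direction of the inequality matters. Second, in~(iii) your competitor $\tilde w_j$ is only admissible and only converges to $w$ if $w=v_0$ outside $B_{r'}$; you should first reduce to perturbations $w$ with $w-v_0$ compactly supported in $B_{r'}$ and then pass to general $w\in v_0+W^{1,p}_0(B_R)$ by approximation. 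The paper avoids this by simply testing against $v_j+\psi$ with $\psi\in C_c^\infty(B_R)$ and passing to the limit using the strong convergence already established in~(ii), which is shorter. None of these points is a fatal flaw, but they are the places where the real work hides.
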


\medskip

The following lemma states that $u^+$ and $u^-$ have coherent growth at two-phase points. 
This is essential to show that the minimizers are the viscosity solution of \eqref{OVERDETERMINED}.

\begin{lemma}
\label{LLe1}
Let $u$ be a bounded minimizer of $J_{\mathrm{TP}}$. Let $x_0 \in \Gamma_{\mathrm{TP}}$ and $r_0>0$ be small such that $B_{r_0}(x_0) \subset D$. Assume that $\sup_{B_r(x_0)} u^- \leq C_0r$ (resp. $\sup_{B_r(x_0)} u^+ \leq C_0 r$) for all $r \in (0,r_0]$. 
Then there exist constant $C_1>0$ such that $\sup_{B_r(x_0)} u^+ \leq C_1 r$ (resp. $sup_{B_r(x_0)} u^- \leq C_1 r$) for all $r \in (0,r_0]$.
\end{lemma}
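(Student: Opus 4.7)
My plan is a blow-up (contradiction) argument combined with a maximal-scale selection. By symmetry I focus on the case $\sup_{B_r(x_0)} u^- \le C_0 r$ and seek the bound on $u^+$. Set
\[
S(r) := \sup_{B_r(x_0)} u^+, \qquad f(r) := S(r)/r.
\]
The function $S$ is continuous on $(0,r_0]$ (since $u$ is continuous by Proposition \ref{P0} and $\bar B_r$ is compact), hence so is $f$. Suppose the conclusion fails: then $\sup_{(0,r_0]} f = +\infty$, so I may pick $\tilde\epsilon_j \to \infty$ and, by continuity and compactness, set
\[
r_j := \max\{\,r\in(0,r_0]\,:\, f(r)\ge \tilde\epsilon_j\,\},\qquad S_j := S(r_j),\qquad \epsilon_j := S_j/r_j \ge \tilde\epsilon_j.
\]
By maximality, $f(r) < \tilde\epsilon_j \le \epsilon_j$ for all $r\in(r_j,r_0]$, and since $S_j\le \|u\|_{L^\infty}$ we have $r_j\to 0$.

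Next, I rescale: $v_j(x):=u(x_0+r_jx)/S_j$. By Proposition \ref{LConverg}, $v_j$ minimizes the rescaled functional with parameter $\sigma_j = r_j/S_j = 1/\epsilon_j \to 0$. The key payoff from the maximal choice of $r_j$ is a uniform bound on $v_j$ on every fixed ball: for any $R>0$ and $j$ large enough that $Rr_j\le r_0$,
\[
\sup_{B_R} v_j^+ \;=\; \frac{S(Rr_j)}{S_j} \;=\; \frac{f(Rr_j)\,Rr_j}{\epsilon_j\,r_j} \;\le\; R,
\]
while the hypothesis on $u^-$ gives $\sup_{B_R} v_j^- \le C_0 R r_j/S_j = C_0 R/\epsilon_j \to 0$. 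Hence $\{v_j\}$ is locally uniformly bounded on $\mathbb R^n$.

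Applying Proposition \ref{LConverg} on each $B_R$ and diagonalizing, a subsequence converges locally uniformly to some $v_0$ which, because $\sigma_j\to 0$, is $p$-harmonic on all of $\mathbb R^n$. Moreover $v_0\ge 0$ (since $v_j^-\to 0$ locally uniformly), and $v_0(0)=0$ because $v_j(0)=u(x_0)/S_j = 0$ (continuity of $u$ and $x_0\in\Gamma_{\mathrm{TP}}$ force $u(x_0)=0$). The strong maximum principle (equivalently, Harnack) applied to the nonnegative $p$-harmonic function $v_0$ on $\mathbb R^n$ with an interior zero yields $v_0\equiv 0$. On the other hand, uniform convergence on $\bar B_1$ gives $\sup_{\bar B_1} v_0 = \lim_j \sup_{\bar B_1} v_j^+ = 1$, a contradiction.

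The main obstacle, and the only subtle point, is ensuring that the blow-up sequence $v_j$ is locally uniformly bounded on $\mathbb R^n$; on $B_1$ this is automatic, but on larger balls it requires the maximal-scale selection of $r_j$, which in turn rests on the continuity of $S(r)$. Once that is in place, everything else — the compactness in Proposition \ref{LConverg}, the limit equation being the $p$-Laplace equation, and the Liouville-type rigidity via the strong maximum principle — is essentially a routine packaging of known facts.
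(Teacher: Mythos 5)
Your proof is correct and rests on the same core mechanism as the paper's: rescale so that the blow-up sequence is bounded, invoke Proposition \ref{LConverg} to pass to a $p$-harmonic limit (since $\sigma_j\to 0$), observe the limit is nonnegative with an interior zero yet not identically zero, and contradict the strong minimum principle. The only genuine difference is in how the scale is chosen: the paper proves a dyadic decay inequality $S(k+1)\le\max\{\tilde C_1 2^{-(k+1)},\,\tfrac12 S(k)\}$ by contradiction and then iterates, keeping the whole argument on a single fixed ball $B_{3/4}$; you instead use a Schoen-type maximal-scale selection for $f(r)=S(r)/r$, which more directly yields locally uniform bounds on $v_j$ (and, if one wishes, a global $p$-harmonic limit on $\mathbb{R}^n$, though the bound on any fixed ball $B_R$ with $R>1$ already suffices). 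Both scale selections buy the same thing -- a uniform bound on $|v_j|$ at a fixed scale strictly larger than $1$, so that the limit cannot be identically zero -- and neither is more general than the other in this context; yours is perhaps slightly leaner notationally, while the paper's dyadic formulation packages the conclusion in a form that iterates cleanly. One trivial slip worth noting: your estimate $\sup_{B_R} v_j^+\le R$ uses $f(Rr_j)<\epsilon_j$, which is only guaranteed by the maximality when $Rr_j>r_j$, i.e.\ $R>1$; for $R\le 1$ one simply uses $\sup_{B_R}v_j^+\le\sup_{B_1}v_j^+=1$, so the local bound $\sup_{B_R} v_j^+\le\max(1,R)$ holds and the argument goes through unchanged.
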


\begin{proof}
We will just demonstrate one of the claims; the other can be demonstrated similarly. By the assumption of the lemma 
\begin{equation}
\label{LEq1}
\sup_{B_{r}(x_0)} u^- \leq C_0 r, \qquad \forall r \in (0,r_0].
\end{equation}
We claim that there is $\tilde C_1>0$ such that
\begin{equation}
\label{LEq2}
S(k+1)\leq \max \left\{ \frac{\tilde C_1}{2^{k+1}}, \frac{1}{2} S(k) \right\},
\end{equation}
where $S(k):=\norm{u}_{L^\infty(B_{2^{-k}}(x_0))}$, for any $k \in \mathbb{N}$ that $2^{-k}\le r_0$. 
To prove this, we argue by contradiction and suppose that \eqref{LEq2} fails. 
Then there is a sequence of integers $k_j$, with $j=1,2, \cdots$ such that
\begin{equation}
\label{LEq3}
S(k_j+1) > \max \left\{ \frac{j}{2^{k_j+1}}, \frac{1}{2} S(k_j) \right\}.
\end{equation}
Observe that since $u$ is a bounded minimizer, then \eqref{LEq3} implies that $k_j \to +\infty$ as $j \to +\infty$. Also, notice that \eqref{LEq3} implies that
\begin{equation}
\label{LEq4}
\sigma_j:=\frac{2^{-k_j}}{S(k_j+1)} \leq \frac{2}{j} \to 0, \qquad \text{as \,\,\, $j \to +\infty$.}
\end{equation}
Now, we introduce the scaled functions $v_j(x):=\frac{u(x_0+2^{-k_j}x)}{S(k_j+1)}$, for any $x \in B_1$. Then, from \eqref{LEq1} and \eqref{LEq4}, it follows that $v_j(0)=0$ and 
\begin{equation}
\label{LEq5}
v_j^-(x)=\frac{u^-(x_0+2^{-k_j}x)}{S(k_j+1)} \leq \frac{2^{-k_j}C_0}{S(k_j+1)} \leq \frac{2C_0}{j} \to 0, \qquad \text{as \,\,\, $j \to +\infty$.}
\end{equation}
Furthermore, it is simple to show that \eqref{LEq3} implies that
\begin{equation}
\label{LEq6}
\sup_{B_1} |v_j| \leq 2, \qquad \text{and} \qquad \sup_{B_{\frac{1}{2}}} |v_j| =1.
\end{equation}
Also,  Proposition \ref{LConverg} entails that $v_j$ is a minimizer of the scaled functional \eqref{LEq17} for $R=\frac{3}{4}$ and we can extract a converging subsequence such that $v_j \to v_0$ uniformly in $\overline{B_{\frac{3}{4}}}$ that $v_0$ is $p$-harmonic. 
The uniform convergence of $v_j$ to $v_0$ along with \eqref{LEq5}, \eqref{LEq6}, give that
$$ \Delta_p v_0(x)=0, \quad v_0(x) \geq 0 \,\,\, \text{if $x \in B_{\frac{3}{4}}$}, \qquad v(0)=0, \quad \sup_{B_{\frac{1}{2}}} v_0=1, $$
which is in contradiction with the strong minimum principle. Thus \eqref{LEq2} obtains.

Now we show how \eqref{LEq2} implies the lemma. Assume that $k_0$ is the smallest integer $k$ that $2^{-k} \le r_0$.
Let $\bar C_1 =\max( \tilde C_1, 2^{k_0} S(k_0))$. 
It is not difficult to  observe from \eqref{LEq2} that $S(k) \le \bar C_1 2^{-k}$.
For an arbitrary $r \in (0,r_0]$ choose $k \geq k_0$ such that $2^{-(k+1)}< r \le 2^{-k}$, then
\[
\norm{u}_{L^\infty(B_{r}(x_0))} \le \norm{u}_{L^\infty(B_{2^{-k}}(x_0))} = S(k) \le  \bar C_1 2^{-k} \le 2 \bar C_1 r.
\]
Thus the statement in the lemma holds for $C_1 = 2 \bar C_1$.
\end{proof}

The following theorem roughly says that, in a very weak sense, the free boundary conditions \eqref{OVERDETERMINED} hold.

\begin{proposition}
\label{LTh001}
Suppose that $u$ is a minimizer of $J_{\mathrm{TP}}$ in $D$ and $D' \subset D$ be such that $|D' \cap \{u=0\}|=0$. Then, we have the following free boundary condition in the very weak sense
$$ \lim_{\epsilon \to 0^+} \int_{\partial\{u>\epsilon\} \cap D'} \left( |\nabla u^+|^p-\lambda_+^p \right) \eta \cdot \nu + \lim_{\delta \to 0^+} \int_{\partial\{u<-\delta\} \cap D'} \left( |\nabla u^-|^p-\lambda_-^p \right) \eta \cdot \nu =0, $$
for any $\eta \in W_0^{1,p}(D', \mathbb{R}^n)$, and where $\nu$ is the outward normal.
\end{proposition}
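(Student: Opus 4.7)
I would prove this by the inner (domain) variation of $J_{\mathrm{TP}}$. For a test vector field $\eta \in C_c^{\infty}(D', \mathbb{R}^n)$ and small $|t|$, let $\Phi_t := \mathrm{id} + t\eta$, which is a diffeomorphism of $D$ equal to the identity on $\partial D$, so $u_t := u \circ \Phi_t^{-1}$ is admissible. Expanding $J_{\mathrm{TP}}(u_t)$ by change of variables (using $(D\Phi_t)^{-1} = I - tD\eta + O(t^2)$ and $\det D\Phi_t = 1 + t\,\mathrm{div}\,\eta + O(t^2)$) and differentiating at $t=0$ via minimality yields the Noether identity
\[
\int_D T(\nabla u) : D\eta \, dx + (p-1)\lambda_+^p \int_{\{u>0\}}\mathrm{div}\,\eta \, dx + (p-1)\lambda_-^p \int_{\{u<0\}}\mathrm{div}\,\eta \, dx = 0, \qquad (\star)
\]
with stress-energy tensor $T_{ij}(\xi) := |\xi|^p\delta_{ij} - p|\xi|^{p-2}\xi_i \xi_j$. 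Since $|T(\nabla u)| \le C|\nabla u|^p \in L^1_{\mathrm{loc}}$ by Proposition~\ref{P0}, $(\star)$ extends by density to every $\eta \in W^{1,p}_0(D',\mathbb{R}^n)$.

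The hypothesis $|D'\cap\{u=0\}| = 0$ lets me split each integral in $(\star)$ over $\{u>0\}$ and $\{u<0\}$, and a direct computation gives the algebraic identity $\sum_i \partial_i T_{ij}(\nabla u) = -p\,(\Delta_p u)\partial_j u$, so $T(\nabla u)$ is distributionally divergence-free in $\{u > 0\}\cup\{u<0\}$ where $u$ is $p$-harmonic. Fix small $\epsilon > 0$; by Sard's theorem (applicable because $u \in C^{1,\alpha}_{\mathrm{loc}}(\{u>0\})$), for a.e.\ such $\epsilon$ the level set $\{u = \epsilon\}$ is a $C^1$ hypersurface with $|\nabla u| > 0$ on it, and the divergence theorem on $\{u>\epsilon\}$ applied separately to $\mathrm{div}\,\eta$ and to $T:D\eta$ gives
\[
\int_{\{u > \epsilon\}}\mathrm{div}\,\eta\, dx = \int_{\{u=\epsilon\}}\eta\cdot\nu\, dS, \qquad \int_{\{u>\epsilon\}}T:D\eta\, dx = \int_{\{u=\epsilon\}}(T\nu)\cdot\eta\, dS,
\]
with outward normal $\nu = -\nabla u/|\nabla u|$. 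Using $\nabla u = -|\nabla u|\nu$ on $\{u=\epsilon\}$, a direct contraction gives $\sum_i T_{ij}(\nabla u)\nu_i = -(p-1)|\nabla u^+|^p\nu_j$, with the analogous formula $-(p-1)|\nabla u^-|^p\nu_j$ holding on $\{u=-\delta\}$. Substituting these into $(\star)$, sending $\epsilon, \delta \to 0^+$ along regular values (with the volume integrals converging by dominated convergence), and dividing by $-(p-1)\neq 0$ produces exactly the proposition's identity.

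The main obstacle is making the divergence theorem for $T(\nabla u)$ rigorous: classical $p$-Laplace regularity gives only $u \in C^{1,\alpha}_{\mathrm{loc}}(\{u>0\})$, so $T$ is merely continuous and the integration by parts is not directly available. My workaround would be to test $(\star)$ against the Sobolev field $\chi_\epsilon(u)\eta \in W^{1,p}_0(D',\mathbb{R}^n)$, where $\chi_\epsilon \in C^\infty(\mathbb{R})$ is a smooth approximation to $\chi_{\{t \ge \epsilon\}}$. Expanding $\partial_i(\chi_\epsilon(u)\eta_j) = \chi_\epsilon'(u)\partial_i u\,\eta_j + \chi_\epsilon(u)\partial_i\eta_j$, using the contraction $\sum_i T_{ij}(\nabla u)\partial_i u = -(p-1)|\nabla u|^p \partial_j u$, and invoking the coarea formula $\int f|\nabla u|\,dx = \int\!\!\int_{\{u=t\}} f\, dS\,dt$, the $\chi_\epsilon'$-terms collapse to surface integrals on $\{u=\epsilon\}$. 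An analogous cutoff on the negative side produces the corresponding limit over $\{u=-\delta\}$, completing the argument without ever differentiating $T$ classically.
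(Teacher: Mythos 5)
Your proposal — domain (inner) variation leading to the Noether identity $(\star)$ with stress tensor $T_{ij}=|\xi|^p\delta_{ij}-p|\xi|^{p-2}\xi_i\xi_j$, followed by localizing to $\{u>\epsilon\}$, $\{u<-\delta\}$ and extracting surface integrals on the level sets — is precisely the argument of \cite[Theorem~2.4]{MR732100}, which is the only proof the paper supplies (it says ``the proof can be established precisely as in \cite[Theorem 2.4]{MR732100}''), and your cutoff-plus-coarea reformulation in the last paragraph correctly handles the passage to the $p$-Laplacian with its merely $C^{1,\alpha}$ solutions. One small caveat: the Sard invocation in your first pass is unjustified, since Sard's theorem for maps $\mathbb{R}^n\to\mathbb{R}$ needs $C^n$ regularity (Whitney's $C^1$ counterexample for $n\ge 2$), but that step is superseded by the coarea argument you give, which only requires $\mathcal{H}^{n-1}$-rectifiability of a.e.\ level set and so closes the proof without any level-set smoothness.
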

\begin{proof}
The proof can be established precisely as in \cite[Theorem 2.4]{MR732100}.
\end{proof}

\begin{corollary}\label{cor:bndry-condition-global-sol}
Suppose $u(x)=\alpha \left(x \cdot \textbf{e} \right)^+-\beta \left( x \cdot \textbf{e} \right)^-$ is  a global minimizer of $J_{\mathrm{TP}}$
for some unite vector $\textbf{e}\in \mathbb{S}^{n-1}$ and the positive constants $\alpha$ and $\beta$.
Then $\alpha$ and $\beta$ satisfy conditions \eqref{E0-Con}.
\end{corollary}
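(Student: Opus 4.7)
The plan is to extract the three relations in \eqref{E0-Con} from the global minimality of $u$ on balls $B_R$ of arbitrary radius. The equality $\alpha^p-\beta^p=\lambda_+^p-\lambda_-^p$ will come from the very-weak Euler--Lagrange condition in Proposition \ref{LTh001}, while the two one-sided bounds $\alpha\ge\lambda_+$ and $\beta\ge\lambda_-$ will come from comparing $u$ with explicit competitors that open a thin zero strip on one side of the hyperplane $\{x\cdot\mathbf{e}=0\}$.

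For the equality, I would apply Proposition \ref{LTh001} with $D'=B_R$; since $\{u=0\}=\{x\cdot\mathbf{e}=0\}$ is a hyperplane, the hypothesis $|D'\cap\{u=0\}|=0$ holds. For $\epsilon,\delta>0$, the sets $\partial\{u>\epsilon\}\cap B_R$ and $\partial\{u<-\delta\}\cap B_R$ are the hyperplane slices $\{x\cdot\mathbf{e}=\epsilon/\alpha\}\cap B_R$ and $\{x\cdot\mathbf{e}=-\delta/\beta\}\cap B_R$, with outward unit normals $-\mathbf{e}$ and $+\mathbf{e}$, and along them $|\nabla u^+|\equiv\alpha$ and $|\nabla u^-|\equiv\beta$. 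Letting $\epsilon,\delta\to 0^+$ and rearranging, the identity becomes
\[
\bigl(\lambda_+^p-\lambda_-^p-\alpha^p+\beta^p\bigr)\int_{\{x\cdot\mathbf{e}=0\}\cap B_R}\eta\cdot\mathbf{e}\,dS=0
\]
for every $\eta\in W_0^{1,p}(B_R;\mathbb{R}^n)$, and any $\eta$ with nonzero surface integral against $\mathbf{e}$ then forces $\alpha^p-\beta^p=\lambda_+^p-\lambda_-^p$.

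For $\alpha\ge\lambda_+$, I would fix a nonnegative $\psi\in C_c^\infty(\mathbb{R}^n)$ whose support meets the hyperplane and, for small $t\ge0$, test $u$ against
\[
v_t(x):=\alpha\bigl(x\cdot\mathbf{e}-t\psi(x)\bigr)^+-\beta(x\cdot\mathbf{e})^-,
\]
which agrees with $u$ outside $\supp\psi$ and is therefore admissible on any sufficiently large ball. Compared with $u$, this introduces a zero slab $Z_t:=\{0<x\cdot\mathbf{e}<t\psi\}$ of volume $|Z_t|=t\int_{\{x\cdot\mathbf{e}=0\}}\psi\,dS+O(t^2)$, while the negative phase is left untouched. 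A first-order expansion of $|\alpha\mathbf{e}-\alpha t\nabla\psi|^p$ together with the divergence theorem on $\{x\cdot\mathbf{e}>0\}$ (to rewrite $\int\partial_\mathbf{e}\psi$ as a surface integral on the hyperplane) leads after cancellation to
\[
J_{\mathrm{TP}}(v_t,B_R)-J_{\mathrm{TP}}(u,B_R)=(p-1)\bigl(\alpha^p-\lambda_+^p\bigr)\,t\int_{\{x\cdot\mathbf{e}=0\}}\psi\,dS+O(t^2).
\]
Global minimality forces this quantity to be $\ge0$ for all $t\ge0$ and all admissible $\psi$, hence $\alpha\ge\lambda_+$. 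The symmetric competitor $w_t(x):=\alpha(x\cdot\mathbf{e})^+-\beta(x\cdot\mathbf{e}+t\psi)^-$, which opens a zero strip on the negative side, yields $\beta\ge\lambda_-$ by an identical computation.

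The main bookkeeping obstacle is the gradient-energy expansion in the inequality step: one must simultaneously track the shifted domain of integration $\{x\cdot\mathbf{e}>t\psi\}$, the volume lost to the zero slab, and the linear correction coming from $\nabla\psi$, and verify that these three contributions collapse into the clean surface integral above. Once this accounting is carried out, the conclusion of the corollary is immediate.
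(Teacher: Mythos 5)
Your argument for the equality $\alpha^p-\beta^p=\lambda_+^p-\lambda_-^p$ is, as in the paper, a direct application of Proposition~\ref{LTh001}; you have spelled out the computation (identifying the level sets and passing $\epsilon,\delta\to 0^+$) more explicitly than the paper, which simply calls the identity ``obvious,'' but it is the same route. For the one-sided bounds $\alpha\ge\lambda_+$, $\beta\ge\lambda_-$ you take a genuinely different path. The paper performs an \emph{inner} variation: it builds the competitor $u_t(x)=u^+(x)-u^-(x+t\xi(x))$ by deforming the domain with a compactly supported vector field $\xi$ that moves the negative phase inwards, and differentiates $J_{\mathrm{TP}}(u_t,B_R)$ at $t=0^+$ to obtain $\int_{\{u=0\}\cap B_R}(\xi\cdot\textbf{e})\left(|\nabla u^-|^p-\lambda_-^p\right)\le 0$. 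You instead write down an explicit \emph{outer} competitor $v_t=\alpha\left(x\cdot\textbf{e}-t\psi\right)^+-\beta\left(x\cdot\textbf{e}\right)^-$ that retracts the positive phase by $t\psi$ and creates a zero slab $Z_t$; the first-order expansion, combining $|Z_t|=t\int_{\{x\cdot\textbf{e}=0\}}\psi\,dS+O(t^2)$, the linear term in $|\alpha\textbf{e}-\alpha t\nabla\psi|^p$, and the divergence theorem to convert the bulk integral of $\textbf{e}\cdot\nabla\psi$ over $\{x\cdot\textbf{e}>0\}$ into $-\int_{\{x\cdot\textbf{e}=0\}}\psi\,dS$, collapses correctly to $(p-1)\left(\alpha^p-\lambda_+^p\right)t\int_{\{x\cdot\textbf{e}=0\}}\psi\,dS+O(t^2)$, and minimality forces $\alpha\ge\lambda_+$; the symmetric competitor gives $\beta\ge\lambda_-$. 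Both arguments are correct and yield the same first-order condition. Yours is slightly more elementary for this particular $u$: because the minimizer is an explicit two-plane, writing the competitor directly avoids the pullback/change-of-variables bookkeeping of a domain flow. The paper's inner-variation form is the one that generalizes to minimizers that are not given in closed form, which is why it is the standard idiom in the free-boundary literature, but for this corollary your direct computation is perfectly adequate.
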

\begin{proof}
The equality $ \alpha^p-\beta^p= \lambda_+^p-\lambda_-^p$ is obvious by invoking Proposition \ref{LTh001}.
Besides, conditions
$$ \alpha \geq \lambda_{+}, \qquad \text{and} \qquad \beta \geq \lambda_{-}, $$
can be obtained by a smooth variation of the free boundary $\{u=0\}=\{x\cdot \textbf{e} =0\} $. 
Indeed, by considering competitors of the form $u_t(x)=u^+(x)-u^-(x+t\xi(x))$ for vector fields $\xi \in C_c^\infty( \bR^n; \bR^n)$ with $\xi \cdot \textbf{e} \leq 0$ so that it moves negative phase only inwards, that is, $\{u_t<0\} \subset \{u<0\}$, and taking the derivative of $J_{\mathrm{TP}}(u_t, B_R)$ at $t>0$ and letting $t \to 0$ (where $R$ is sufficiently large such that $\supp \xi \subset B_R$), we get
$$ \int_{\{u=0\} \cap B_R} ( \xi \cdot \textbf{e}) \left( |\nabla u^-|^p- \lambda_-^p \right) \leq 0, $$
which gives $\beta \geq \lambda_-$. The estimate on $\alpha$ is analogous.
\end{proof}

\medskip

\section{Free boundary conditions in the viscosity sense}
Let $u : D \to \mathbb{R}$ be a local minimizer of $J_{\mathrm{TP}}$. 
In this section, we will show that $u$ satisfies the free boundary conditions \eqref{OVERDETERMINED}  in a weak (viscosity) sense.

\begin{definition}
\label{D1}
Let $D$ be an open set. 
We say that a function $Q:D \to \mathbb{R}$ touches a function $w: D \to \mathbb{R}$ from below (resp. from above) at a point $x_0 \in D$ if $Q(x_0)=w(x_0)$ and
$$ Q(x)-w(x) \leq 0 \qquad (resp. \,\,\, Q(x)-w(x) \geq 0), $$
for every $x$ in a neighborhood of $x_0$. We will say that $Q$ touches $w$ strictly from below (resp. above) if the above inequalities are strict for $x \neq x_0$.

A function $Q$ is an (admissible) comparison function in $D$ if
\begin{enumerate}
\item [(a)]
$Q \in C^1(\overline{\{Q>0\}} \cap D) \cap C^1(\overline{\{Q<0\}} \cap D)$;
\item [(b)]
$Q \in C^2(\{Q>0\} \cap D) \cap C^2(\{Q<0\} \cap D)$;
\item [(c)]
$\partial \{Q>0\}$ and $\partial \{Q<0\}$ are smooth manifolds in $D$.
\end{enumerate}
We should remark that if $\nabla Q \neq 0$ on $\partial \{Q>0\} \cup \partial\{Q<0\}$, the condition $(c)$ above holds.

\end{definition}

\begin{lemma}
\label{L2}
Let $u$ be a local minimizer of $J_{\mathrm{TP}}$ in the open set $D \subset \mathbb{R}^n$. Then the following optimality conditions on the free boundary $F(u)$ hold.
\begin{enumerate}
\item [\textup{(A)}] Suppose that $Q$ is a comparison function that touches $u$ from \underline{below} at $x_0$.
\begin{enumerate}
\item[\textup{(A.1)}] If $x_0 \in \Gamma^{+}_{\mathrm{OP}}$, then $|\nabla Q^+(x_0)| \leq \lambda_{+}$;
\item[\textup{(A.2)}]  if $x_0 \in \Gamma^{-}_{\mathrm{OP}}$, then $ Q^+ \equiv 0 $ in a neighborhood of $x_0$ and $|\nabla Q^-(x_0)| \geq \lambda_{-}$;
\item[\textup{(A.3)}]  if $x_0 \in \Gamma_{\mathrm{TP}}$, then $|\nabla Q^-(x_0)| \geq \lambda_{-}$ and 
$$ |\nabla Q^+ (x_0)|^p-|\nabla Q^-(x_0)|^p \leq \lambda_+^p-\lambda_-^p. $$
\end{enumerate}
\item [\textup{(B)}] Suppose that $Q$ is a comparison function that touches $u$ from \underline{above} at $x_0$.
\begin{enumerate}
\item [\textup{(B.1)}]  If $x_0 \in \Gamma^{+}_{\mathrm{OP}}$, then $ Q^- \equiv 0 $ in a neighborhood of $x_0$ and $|\nabla Q^+(x_0)| \geq \lambda_{+}$;
\item [\textup{(B.2)}]  if $x_0 \in \Gamma^{-}_{\mathrm{OP}}$, then $|\nabla Q^-(x_0)| \leq \lambda_{-}$;
\item [\textup{(B.3)}]  if $x_0 \in \Gamma_{\mathrm{TP}}$, then $|\nabla Q^+(x_0)| \geq \lambda_{+}$ and 
$$ |\nabla Q^+ (x_0)|^p-|\nabla Q^-(x_0)|^p \geq \lambda_+^p-\lambda_-^p. $$
\end{enumerate}
\end{enumerate}
\end{lemma}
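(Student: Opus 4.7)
The strategy throughout is a contradiction argument: for each sub-case, if the asserted gradient inequality fails, I would construct an admissible competitor for $u$ whose energy is strictly smaller than $J_{\mathrm{TP}}(u)$, contradicting minimality. The competitors are built by locally perturbing the free boundaries of $Q$ by translation in the normal direction---admissible since $Q$ is a comparison function, so $\partial\{Q>0\}$ and $\partial\{Q<0\}$ are smooth manifolds near $x_0$---and taking $v = \max(u, \tilde Q)$ or $v = \min(u, \tilde Q)$ in a small ball around $x_0$, leaving $u$ untouched outside.

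I would first address the one-phase sub-cases (A.1), (A.2), (B.1), (B.2), which are the classical one-phase Bernoulli viscosity conditions and can be handled along the lines of \cite[Section 3]{MR2133664}. Take (A.1) as prototypical: if $x_0 \in \Gamma^+_{\mathrm{OP}}$, $Q \leq u$, $Q(x_0) = u(x_0) = 0$, and $|\nabla Q^+(x_0)| > \lambda_+$ for contradiction, then the $C^1$ structure of $\partial\{Q>0\}$ at $x_0$ together with the strict gradient bound forces $Q^+$ to dominate a strictly super-$\lambda_+$ linear growth into the half-space $\{(x-x_0)\cdot \nu > 0\}$. A standard Bernoulli shell competitor adapted to the level sets of $Q$---truncating $u$ in a thin neighborhood of $\partial\{Q>0\}$ near $x_0$---then decreases $J_{\mathrm{TP}}$ by an amount of order $\bigl(|\nabla Q^+(x_0)|^p - \lambda_+^p\bigr)|\mathrm{shell}|$, contradicting minimality. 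Case (A.2) uses $Q \leq u \leq 0$ in a ball around $x_0$ to deduce $Q^+ \equiv 0$ there, then applies the analogous argument to the negative phase. Cases (B.1), (B.2) are dual: the competitor enlarges $\{u>0\}$ (resp.\ $\{u<0\}$) via an outward shell, and the bulk penalty cost $\lambda_\pm^p$ is dominated by the Dirichlet gain when the assumed strict inequality fails.

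For the two-phase sub-cases (A.3) and (B.3), the comparison $Q \leq u$ (in (A.3)) yields $Q^- \geq u^-$, making $Q^-$ an upper barrier for $u^-$ at $x_0 \in \partial\{u<0\}$; the argument dual to (B.1) applied to the negative phase then gives $|\nabla Q^-(x_0)| \geq \lambda_-$, and symmetrically $|\nabla Q^+(x_0)| \geq \lambda_+$ in (B.3). The combined inequality $|\nabla Q^+(x_0)|^p - |\nabla Q^-(x_0)|^p \leq \lambda_+^p - \lambda_-^p$ in (A.3) (and its reverse in (B.3)) follows from a two-sided shell perturbation translating both free-boundary components of $Q$ simultaneously in a common normal direction, combined with Proposition \ref{LTh001}: testing the very-weak free-boundary identity for $u$ against a cut-off of this normal vector field localized near $x_0$ passes to the pointwise inequality at $x_0$ using the $C^1$ regularity of $\partial\{Q^\pm\}$ and the tangency $Q(x_0) = u(x_0) = 0$, in complete parallel with Corollary \ref{cor:bndry-condition-global-sol}.

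The main technical obstacle is the shell energy estimate $\int_{\mathrm{shell}} |\nabla u|^p$, which cannot be bounded directly because $u$ is not yet known to be Lipschitz at this stage---only the BMO bound of Proposition \ref{P0} is available. This is resolved by appealing to the strong $W^{1,p}$ compactness of Proposition \ref{LConverg}: one rescales about $x_0$ on a scale where $v_j \to v_0$ strongly in $W^{1,p}$, computes the shell-energy expansion on the affine blow-up of $Q$ (where everything is explicit), and then transfers the strict energy gap back to $u$ via the strong gradient convergence.
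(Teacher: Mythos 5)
Your approach---shell competitors plus the distributional free-boundary identity of Proposition \ref{LTh001}---is genuinely different from the paper's. The paper does not build competitors directly against $u$ near $x_0$: it passes to a blow-up limit of $u$ at the touching point, uses the blow-up of the comparison function $Q$ (which, by differentiability of $Q^\pm$ at $x_0$, is an explicit one- or two-plane function) to pin down the linear profile of that limit via a touching lemma for global minimizers, and then reads the gradient inequalities off Corollary \ref{cor:bndry-condition-global-sol}. The domain-variation/competitor computation is thus localized to an explicit two-plane function rather than to the a priori irregular $u$, which is what makes the shell estimate tractable.

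The concrete gap in your sketch is the growth control at a two-phase touching point $x_0\in\Gamma_{\mathrm{TP}}$ in (A.3)/(B.3). You propose to rescale and invoke the $W^{1,p}$ compactness of Proposition \ref{LConverg}, but that proposition requires the rescaled functions $u(x_0+r_j\,\cdot\,)/r_j$ to be uniformly bounded in $L^\infty$, i.e.\ $u$ must already grow at most linearly at $x_0$. This is not free: Proposition \ref{P0} yields only log-Lipschitz regularity for $p>2$. The touching function does give a one-sided linear bound, say $u^-\le Q^-\le C\,|x-x_0|$ when $Q$ touches from below, but says nothing directly about $u^+$. Propagating this linear bound from one phase to the other is precisely the purpose of Lemma \ref{LLe1} (coherent growth at two-phase points), which you never invoke; without it the blow-up sequence need not converge to a nontrivial, linearly growing limit and your whole rescaling and ``strong-convergence transfer'' step has nothing to converge to. For the one-phase sub-cases (A.1), (A.2), (B.1), (B.2) you would likewise need the local Lipschitz estimate of Proposition \ref{pro:lip-one-phase} before any blow-up limit exists. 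Finally, passing from $u$'s very-weak identity in Proposition \ref{LTh001}---an integral over level sets of $u$---to a pointwise inequality at a touching point of $Q$ is itself nontrivial and would again require a blow-up or a quantitative closeness of $\partial\{u>0\}$ to $\partial\{Q>0\}$; the paper sidesteps this by applying the domain-variation computation to the explicit blow-up limit.
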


\begin{proof} 
First, we will prove the gradient bounds in $\textup{(A.1)}$ and $\textup{(B.1)}$.
The case $x_0 \in \Gamma^-_{\mathrm{OP}}$, and the proofs of $\textup{(A.2)}$ and $\textup{(B.2)}$ can be obtained similarly.

Let $x_0 \in \Gamma^+_{\mathrm{OP}} $ be a one-phase point and let $Q$ touches $u$ from below at $x_0$. Then, $Q^+$ touches $u$ from below at $x_0$, too. Consider $u_{x_0,r_k}(x)=\frac{u(x_0+r_kx)}{r_k}$ and $Q^+_{x_0,r_k}(x)=\frac{Q^+(x_0+r_kx)}{r_k}$ as the blow-up sequences of $u$ and $Q^+$ at $x_0$. 
By virtue of Proposition \ref{pro:lip-one-phase}, the functions  $u_{x_0,r_k}$ are uniformly Lipschitz for  sufficiently $r_k$ small and 
up to extracting a subsequence, we can assume that $u_{x_0,r_k}$ converges uniformly to a blow-up limit $v$. 
The limit $v$ is a minimizer of one-phase functional  $J_{\mathrm{OP}}$ and so $\Delta_p v =0$ in $\{v>0\}$.

On the other hand, since $Q^+$ is differentiable at $x_0$ in $\overline{\Omega_Q^+}$, we get that $Q^+_{x_0,r_k}$ converges to the function
\begin{equation}
\label{EEEEEEE01}
H_{Q^+}(x)=\left( x \cdot \textbf{e}' \right)^+ \qquad \text{with \qquad $\textbf{e}'= \nabla Q^+(x_0)$}.
\end{equation}
If $\nabla Q^+(x_0)=0$, $\textup{(A.1)}$  is trivially valid. 
We assume that $\textbf{e}'\ne 0$, and since $H_{Q^+}$ touches $v$ from below at $x=0$, we get
\begin{equation*}
v(x)= \alpha (x\cdot \textbf{e}')^+ + o(|x|), \qquad \alpha \geq 1,
\end{equation*}
for some $\alpha$; see \cite[Lemma B.1]{fotouhi-shahgholian2023}. 
We get that any blow-ups of $v$ will be $v_0(x)= \alpha (x\cdot \textbf{e}')^+$ which is also a minimizer of  $J_{\mathrm{OP}}$. 
Thus $\alpha|\textbf{e}'|= \lambda_+$ due to the free boundary condition for one-phase minimizers (Proposition \ref{LTh001} or see \cite[Theorem 2.1]{MR2133664}) and so
$$ |\nabla Q^+(x_0)| \leq \lambda_+.$$

Similarly, when $Q$ touches $u$ from above at $x_0$, then also $Q^+$ touches $u$ from above at $x_0$, and the claim $Q^- \equiv 0$ in $\textup{(B.1)}$ is trivially true. 
Again, consider $u_{x_0,r_k}$, which up to extracting a subsequence, converges uniformly to a blow-up limit $v$ and $Q^+_{x_0,r_k}$, as the blow-up sequences of $Q^+$ at $x_0$, which converges to the function \eqref{EEEEEEE01}. 
Now, we argue similar to the proof of $\textup{(A.1)}$ to get
$$ |\nabla Q^+(x_0)| \geq \lambda_+. $$

Now, we prove $\textup{(A.3)}$. Suppose $ x_0 \in \Gamma_{\mathrm{TP}} $ and assume that $Q$ touches $u$ from below at $x_0$.
Then $u^- \le Q^-$ and $u^-(x) \leq C_0|x-x_0|$  for $C_0 = 2 |\nabla Q^-(x_0)|$ if $|x-x_0|$ is sufficiently  small. 
Now we  employ Lemma \ref{LLe1} to deduce that  $|u(x)| \le C_1|x-x_0|$ in a neighborhood of $x_0$.

Let $u_{x_0,r_k}$ and $Q_{x_0,r_k}$ be the blow-up sequences of $u$ and $Q$ at $x_0$. 
Then, by using Proposition  \ref{LConverg}, up to extracting a subsequence, we can assume that $u_{x_0,r_k}$ converges uniformly to some function $v$ which is also a minimizer of  $J_{\mathrm{TP}}$. Moreover, it satisfies
\begin{equation}
\label{EEEEE01}
|v(x)| \le C_1|x|.
\end{equation}

On the other hand, since $Q^+$ and $Q^-$ are differentiable at $x_0$ (respectively in $\overline{\Omega_Q^+}$ and $\overline{\Omega_Q^-}$), we get that $Q_{x_0,r_k}$ converges to the function
$$ H_Q(x)= \left( x \cdot \tilde{\textbf{e}}^+ \right)^+- \left( x \cdot \tilde{\textbf{e}}^- \right)^-, $$
where $\tilde{\textbf{e}}^\pm=\nabla Q^\pm(x_0)$.
Since $H_Q$ touches $v$ from below at $x=0$, we have (\cite[Lemma B.1]{fotouhi-shahgholian2023})
\begin{equation*}
\begin{split}
v^+(x)= \alpha (x\cdot \tilde{\textbf{e}}^-)^+ + o(|x|), & \qquad   |\tilde{\textbf{e}}^+ |  \leq \alpha |\tilde{\textbf{e}}^-|, \\
v^-(x) = \beta (x\cdot \tilde{\textbf{e}}^-)^- + o(|x|), & \qquad  \beta \le 1 ,
\end{split}
\end{equation*}
for some $\alpha, \beta \ge0$.  Note that by virtue of the non-degeneracy, Proposition \ref{P1.5}, $v^- \not \equiv 0$ and so  $\tilde{\textbf{e}}^- \ne 0$.
If $v_0$ is a blowup of $v$ (recall \eqref{EEEEE01} and Proposition \ref{LConverg}), it will be  
$v_0(x) = \alpha (x\cdot \tilde{\textbf{e}}^-)^+ - \beta (x\cdot \tilde{\textbf{e}}^-)^-$ which is also a minimizer of $J_{\mathrm{TP}}$. 
Now apply Corollary \ref{cor:bndry-condition-global-sol}, we get 
\[
(\alpha^p -\beta^p) |\tilde{\textbf{e}}^-|^p = \lambda_+^p -\lambda_-^p, \qquad \beta|\tilde{\textbf{e}}^-| \ge \lambda_-.
\]
Hence,
$$ |\nabla Q^+ (x_0)|^p-|\nabla Q^-(x_0)|^p \leq \alpha^p-\beta^p=\lambda_+^p-\lambda_-^p,$$ 
as well as $|\nabla Q^-(x_0)| \ge \lambda_-$.
The proof of $\textup{(B.3)}$ is analogous.
\end{proof}

If $u : D \to \mathbb{R}$ is a continuous function such that the claims $\textup{(A)}$ and $\textup{(B)}$ hold for every comparison function $Q$, then we say that $u$ satisfies the boundary condition \eqref{OVERDETERMINED} on the free boundary in viscosity sense.

We need the following straightforward consequence of the definition of viscosity solution. It emphasizes what happens when a function is touching only one of the two phases.

\begin{lemma}
\label{L3}
Let $u: D \to \mathbb{R}$ be a continuous function that satisfies \eqref{OVERDETERMINED}. 
\begin{enumerate}
\item [\textup{(i)}] Assume that $Q$ is a comparison function touching $u^+$ from above at $x_0 \in \partial \Omega^+_u$ (resp. $-u^-$ from below at $x_0 \in \partial \Omega^-_u$), then
$$ |\nabla Q^+(x_0)| \geq \lambda_{+} \qquad \left(resp. \,\,\, |\nabla Q^-(x_0)| \geq \lambda_{-} \right). $$
\item [\textup{(ii)}] Assume that $Q$ is a comparison function touching $u^+$ from below at $x_0 \in \Gamma^{+}_{\mathrm{OP}}$ (resp. $-u^-$ from above at $x_0 \in \Gamma^{-}_{\mathrm{OP}}$), then
$$ |\nabla Q^+(x_0)| \leq \lambda_{+} \qquad \left(resp. \,\,\, |\nabla Q^-(x_0)| \leq \lambda_{-} \right). $$
\end{enumerate}
\end{lemma}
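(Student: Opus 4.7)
The guiding principle is that Lemma \ref{L2} already delivers the desired one-sided gradient bounds whenever a comparison function touches $u$ itself, so the task reduces to upgrading a touching of $u^{\pm}$ to a touching of $u$. Two pointwise inequalities do the work, $u \le u^+$ and $u \ge -u^-$, together with the observation that continuity of $u$ forces $u(x_0)=0$ whenever $x_0 \in \partial\Omega_u^+$ or $x_0 \in \partial\Omega_u^-$, since such a point is simultaneously a limit of points in and outside the relevant phase.

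For part \textup{(i)}, suppose $Q$ touches $u^+$ from above at $x_0 \in \partial\Omega_u^+$. Then $Q(x_0)=u^+(x_0)=u(x_0)=0$, and near $x_0$ one has $Q\ge u^+\ge u$, so $Q$ also touches $u$ from above at $x_0$ and remains admissible in the sense of Definition \ref{D1}. Decomposing $\partial\Omega_u^+ = \Gamma^+_{\mathrm{OP}} \cup \Gamma_{\mathrm{TP}}$, Lemma \ref{L2}\textup{(B.1)} or \textup{(B.3)} yields $|\nabla Q^+(x_0)|\ge\lambda_+$ in either case. The mirror statement for $-u^-$ touching $u$ from below at $x_0 \in \partial\Omega_u^-$ follows identically from $-u^- \le u$ together with Lemma \ref{L2}\textup{(A.2)}--\textup{(A.3)}.

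For part \textup{(ii)}, suppose $Q$ touches $u^+$ from below at $x_0 \in \Gamma^+_{\mathrm{OP}}$. Since $\Gamma^+_{\mathrm{OP}} \cap \partial\Omega_u^- = \emptyset$ and $\Omega_u^+ \cap \Omega_u^- = \emptyset$ with $x_0 \in \partial\Omega_u^+$, it follows that $x_0 \notin \overline{\Omega_u^-}$; hence some neighborhood $U$ of $x_0$ satisfies $u^-\equiv 0$, and $u = u^+$ on $U$. Therefore $Q$ touches $u$ itself from below at $x_0$ and Lemma \ref{L2}\textup{(A.1)} gives $|\nabla Q^+(x_0)|\le\lambda_+$. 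The analogous statement at $\Gamma^-_{\mathrm{OP}}$ is entirely symmetric.

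I do not expect a serious obstacle here, as the lemma is a bookkeeping consequence of Lemma \ref{L2}; the only delicate point is to certify that the touching hypothesis carries over from $u^{\pm}$ to $u$, which is precisely what the inequalities $u\le u^+$, $u\ge -u^-$ and the local identification $u = u^{\pm}$ near one-phase points provide.
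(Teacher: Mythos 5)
Your proof is correct. For part \textup{(i)} it is exactly the paper's intended route (``directly from (B)''): the inequalities $Q\ge u^+\ge u$, together with $u(x_0)=0$ at any $x_0\in\partial\Omega^+_u$, upgrade the touching of $u^+$ from above to a touching of $u$ from above, and then \textup{(B.1)} or \textup{(B.3)} of Lemma~\ref{L2} applies according to whether $x_0\in\Gamma^+_{\mathrm{OP}}$ or $x_0\in\Gamma_{\mathrm{TP}}$; the resp.\ statement follows from $Q\le -u^-\le u$ and \textup{(A.2)}/\textup{(A.3)}.

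For part \textup{(ii)} the paper is terse (``follows the same lines of arguments as the proof of Lemma~\ref{L2}''), which could be read as an invitation to redo the blow-up argument at the level of $u^+$. You instead point out that $x_0\in\Gamma^+_{\mathrm{OP}}$ implies $x_0\notin\overline{\Omega^-_u}$: it is not in $\Omega^-_u$ because $u(x_0)=0$, and not in $\partial\Omega^-_u$ by the definition of $\Gamma^+_{\mathrm{OP}}$. Hence $u\ge 0$, i.e.\ $u=u^+$, in a neighborhood of $x_0$, and a function touching $u^+$ from below there is literally touching $u$ from below, so Lemma~\ref{L2}\textup{(A.1)} (resp.\ \textup{(B.2)}) gives the bound. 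This is a clean reduction that avoids re-running any blow-up; the only thing it adds to the paper's one-line proof is the explicit topological observation that one-phase points of $\Omega^+_u$ are isolated from $\overline{\Omega^-_u}$, which is exactly the step worth making explicit.
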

\begin{proof}
Statement $\textup{(i)}$ will be obtained directly from $\textup{(B)}$. 
The proof of $\textup{(ii)}$ follows the same lines of arguments as the proof of Lemma \ref{L2}.
\end{proof}

\section{Flatness decay at two-phase points}
In this section, we will  follow the method of  improvement of flatness. In fact, we will prove that at two-phase points $x_0 \in \Gamma_{\mathrm{TP}}$, there is a constant $\epsilon_0>0$ such that if $u$ is $\epsilon_0$-flat in $B_r(x_0)$ with respect to $H=H_{\alpha,\textbf{e}}$, then it has excess flatness in smaller scales with respect to another $\tilde{H}=H_{\tilde{\alpha},\tilde{\textbf{e}}}$.

\begin{theorem}
\label{T2}
For every $1<p<\infty$, $ 0<L_{0}, L_{1} $ and $\gamma \in (0,\frac{1}{2})$, there exist $\epsilon_0>0$, $C>0$ and $\rho >0 $ such that if the function $u: B_1 \to \mathbb{R}$ satisfies:
\begin{enumerate}[label=(\alph*)]
\item the origin is on the two-phase free boundary, $0 \in \Gamma_{\mathrm{TP}}$;
\item $u$ is $p$-harmonic in $\Omega^+_u \cup \Omega^-_u$;
\item $u$ satisfies the free boundary condition \eqref{OVERDETERMINED} in viscosity sense;
\item $u$ is $\epsilon_0$-flat in $B_1$, that is,
\begin{equation}
\label{E4.5}
\|u-H_{\alpha, \textbf{e}_n}\|_{L^{\infty}(B_1)} \leq \epsilon_0, \qquad \text{for some \,\,\, $ \max(\lambda_{+},L_0) \le  \alpha \leq L_1$},
\end{equation}
\end{enumerate}
then, there are $\textbf{e} \in \mathbb{S}^{n-1}$ and $\tilde{\alpha} \geq \max(\lambda_{+},L_0)$, such that
\begin{equation}
\label{E5}
|\textbf{e}-\textbf{e}_n|+|\tilde{\alpha}-\alpha| \leq C \|u-H_{\alpha, \textbf{e}_n}\|_{L^{\infty}(B_1)},
\end{equation}
and 
\begin{equation}
\label{E6}
\|u_{\rho}-H_{\tilde{\alpha}, \textbf{e}}\|_{L^{\infty}(B_1)} \leq \rho^{\gamma} \|u-H_{\alpha, \textbf{e}_n}\|_{L^{\infty}(B_1)},
\end{equation}
where $u_{\rho}(x)$ denotes $u_{0,\rho}(x)=\frac{u(\rho x)}{\rho}$.
\end{theorem}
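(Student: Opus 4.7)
My approach would be the improvement-of-flatness scheme of De Silva, adapted to the two-phase $p$-Laplacian setting and following the blueprint of \cite{MR4285137} for $p=2$. I would argue by contradiction: if the theorem failed for some admissible $L_0,L_1,\gamma$, I would extract a sequence $u_j$ satisfying hypotheses $(a)$--$(d)$ with
\[
\epsilon_j := \|u_j - H_{\alpha_j,\mathbf{e}_n}\|_{L^\infty(B_1)} \to 0, \qquad \alpha_j \to \alpha_\star \in [\max(\lambda_+,L_0),L_1],
\]
and companion $\beta_j \to \beta_\star$ determined by $\alpha_j^p-\beta_j^p=\lambda_+^p-\lambda_-^p$ (so $\beta_\star\geq\lambda_->0$), for which no $\tilde\alpha,\tilde{\mathbf{e}}$ verifies \eqref{E5}--\eqref{E6}. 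The uniform bounds on $\alpha_\star,\beta_\star$ away from $0$ and $\infty$ make $\Delta_p$ uniformly elliptic in each phase near the (nearly flat) interface for large $j$.

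The technical heart of the argument is a two-phase partial Harnack inequality for the normalized deviations
\[
\tilde u_j(x) := \frac{u_j(x)-H_{\alpha_j,\mathbf{e}_n}(x)}{\epsilon_j}.
\]
Following the De Silva sliding-barrier method, I would construct radial $p$-harmonic barriers in each phase whose gradient on their own free boundary strictly violates the Bernoulli flux $|\nabla v^+|^p-|\nabla v^-|^p=\lambda_+^p-\lambda_-^p$, glue them across the interface, and use the viscosity free-boundary conditions of Lemma \ref{L2}(A.3)/(B.3) to prevent the barrier from touching $u_j$ at a free-boundary point. Interior comparison based on the uniformly elliptic linearization of $\Delta_p$ yields oscillation decay for $\tilde u_j$; iteration produces uniform $C^{0,\sigma}$ estimates. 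Along a subsequence, $\tilde u_j$ then converges locally uniformly on $B_{1/2}\cap\{x_n>0\}$ and $B_{1/2}\cap\{x_n<0\}$ to a limit $\tilde u_\infty$ defined on each side of $\{x_n=0\}$.

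Next I would identify the limit problem. Passing to the limit in $\Delta_p u_j=0$ linearized at the constant gradients $\alpha_\star\mathbf{e}_n$ and $-\beta_\star\mathbf{e}_n$ and dividing through by the common positive prefactor produces the constant-coefficient uniformly elliptic equation
\[
\Delta\tilde u_\infty + (p-2)\,\partial_{nn}\tilde u_\infty = 0 \qquad \text{on each side of } \{x_n=0\}.
\]
On $\{x_n=0\}$, linearizing the vanishing of $u_j$ on its nearly flat free boundary yields the interface-continuity relation $\alpha_\star\tilde u_\infty(x',0^-)=\beta_\star\tilde u_\infty(x',0^+)$, while differentiating the Bernoulli identity around $H_{\alpha_\star,\mathbf{e}_n}$ produces the linearized flux condition
\[
\alpha_\star^{p-1}\partial_n\tilde u_\infty(x',0^+) \;=\; \beta_\star^{p-1}\partial_n\tilde u_\infty(x',0^-),
\]
justified in the limit by viscosity testing with smooth two-plane perturbations and Lemma \ref{L2}. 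Schauder theory for this constant-coefficient linear transmission problem delivers $C^{1,\alpha'}$ regularity up to $\{x_n=0\}$ at the origin; hence $\tilde u_\infty$ admits an affine approximation on each side whose coefficients are exactly those obtained by differentiating the family $H_{\tilde\alpha,\tilde{\mathbf{e}}}$ in its parameters at $(\alpha_\star,\mathbf{e}_n)$.

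To close, the affine approximation translates into a perturbed two-plane $H_{\tilde\alpha_j,\tilde{\mathbf{e}}_j}$ with $|\tilde\alpha_j-\alpha_j|+|\tilde{\mathbf{e}}_j-\mathbf{e}_n|\lesssim\epsilon_j$, which is \eqref{E5}. Combining the uniform convergence of $\tilde u_j$ to $\tilde u_\infty$ with the $C^{1,\alpha'}$ approximation error, rescaling by $\rho$ gives
\[
\|(u_j)_\rho - H_{\tilde\alpha_j,\tilde{\mathbf{e}}_j}\|_{L^\infty(B_1)} \leq C\epsilon_j\rho^{\alpha'},
\]
and a choice of $\rho$ with $C\rho^{\alpha'}\leq\rho^\gamma$ contradicts the failure of \eqref{E6} for $j$ large. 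The step I expect to be hardest is the two-phase partial Harnack inequality: the $p$-Laplacian is nonlinear, so additive linear barriers are unavailable, and one must construct radial $p$-harmonic barriers jointly on both phases satisfying calibrated strict Bernoulli inequalities, carefully exploiting Lemma \ref{L2} (which only supplies viscosity-sense inequalities) to slide them across the interface without ever touching $u_j$ from the wrong side. This joint calibration is exactly what forces the nontrivial exponent $p-1$ in the linearized flux condition and drives all of the $p$-dependence of the final result.
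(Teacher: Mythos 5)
Your proposal correctly captures the De Silva improvement-of-flatness scheme, the role of a sliding-barrier partial Harnack inequality, and the coefficients in the linearized flux condition at the interface (the exponent $p-1$ you find is consistent with the paper's exponent $p$ once you account for your different normalization of the linearizing sequence; the paper divides by $\epsilon_k\alpha_k$ and $\epsilon_k\beta_k$ separately rather than just $\epsilon_k$). However, there is a genuine gap that undermines the argument at exactly the points the theorem is designed to treat.

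You identify the limit of the linearizing sequence as a solution of a \emph{transmission problem}: the operator $\mathcal{L}_p=\Delta+(p-2)\partial_{nn}$ on each half-ball, a continuity relation across $\{x_n=0\}$, and a linearized flux jump condition there. You then apply Schauder theory to that problem and conclude. But the continuity relation you write down is \emph{not a linearization identity}: it holds only if $\partial\Omega^+_{u_j}$ and $\partial\Omega^-_{u_j}$ actually coincide. Near a branching point, when $\alpha_j-\lambda_+=O(\epsilon_j)$, the positive and negative free boundaries can genuinely separate at scale $\epsilon_j$, and under the linearizing rescaling the traces $v_+(x',0)$ and $v_-(x',0)$ remain distinct along a nontrivial open jump set $\mathcal{J}$. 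In that regime the correct limiting problem is a \emph{two-membrane problem}: the obstacle inequality $v_+\le v_-$ on $\{x_n=0\}$, the transmission condition holding only on the contact set $\mathcal{C}=\{v_+=v_-\}$, and one-sided Neumann-type conditions on the jump set $\mathcal{J}$. This reduces (after a rotation removing the $(p-2)\partial_{nn}$ term) to a thin-obstacle/Signorini problem, whose optimal regularity at the origin is only $C^{1,1/2}$ — and this is exactly why Theorem \ref{T2} is stated for $\gamma\in(0,\frac12)$. Your transmission-plus-Schauder route would produce $\gamma$ up to $1$, which cannot be correct at branching points. The paper resolves this by introducing the quantity $\ell=\lambda_+^p\lim_k\frac{\alpha_k^p-\lambda_+^p}{p\alpha_k^p\epsilon_k}$ and proving two separate improvement lemmas: for $\ell=\infty$ your transmission picture is indeed correct (and the paper proves $\mathcal{J}=\emptyset$ as a consequence, not as a linearization), while for $0\le\ell<\infty$ the limit is the two-membrane problem and the flatness decay comes with the weaker exponent $3/2$. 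Without this dichotomy and the two-membrane regularity theory (paper's Lemma \ref{LL2}, proved via the Signorini problem), the argument does not reach the theorem as stated.

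A secondary, smaller remark: in the partial Harnack step you speak of ``interior comparison based on the uniformly elliptic linearization of $\Delta_p$,'' but the sliding barriers in the paper (Lemma \ref{L7}--\ref{L9}) are constructed so that the full nonlinear operator $\mathrm{div}(|\textbf{e}_n-s\nabla\varphi|^{p-2}(\textbf{e}_n-s\nabla\varphi))$ has a definite sign; comparison via a linearization only works once one knows the gradient is bounded away from zero, which is itself something one gains during the iteration. The paper splits the Harnack argument into the two cases $|\nabla v(P)|\gtrless\frac{\lambda}{4}$ precisely to handle this degeneracy.
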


Theorem \ref{T2} is an easy consequence of the two upcoming lemmas. In the first one, we deal with the situation where the two-plane is, roughly, $H_{\lambda_{+}, \textbf{e}}$ for some $\textbf{e} \in \mathbb{S}^{n-1}$. Note that this is the case where one might expect the presence of branching points and it is indeed in this setting that we will obtain the two membrane problems as "linearization" (see e.g. \cite[Subsection 1.3]{MR4285137} for a presentation of the linearization method in studying the regularity of free boundaries). In the second lemma, we deal with the case when the closest half-plane solution has a gradient much larger than $\lambda_{+}$. In this case, the origin will be an interior two-phase point. In fact, in one-phase problems, it is possible to obtain universal interior bounds, in the sense that, if $u$ is a solution in a ball $B_1$ and $0 \in F(u)$, then $| \nabla u|$ is bounded in $B_{\frac{1}{2}}$ by a universal constant, no matter what the boundary data are. However, in two-phase problems, this is generally not possible. For instance, in the one-dimensional minimization scenario, increasing the boundary data leads to the appearance of a solution with a large gradient near the origin, see \cite[Section 1.1]{MR2145284}.

\begin{lemma}[Improvement of flatness: branching points]
\label{L4}
For every $1<p<\infty$, $ 0<L_0, L_1 $, $\gamma \in (0,\frac{1}{2})$, and $M>0$, there exist $\epsilon_1=\epsilon_1(p,\gamma, n, L_0, L_1, M)$, $C_1=C_1(p,\gamma, n, L_0, L_1, M)$ and $\rho=\rho(p,\gamma, n, L_0, L_1, M)$ such that 
if function $u: B_1 \to \mathbb{R}$ satisfies $(a)-(b)-(c)$ of Theorem \ref{T2} and furthermore
$$ \|u-H_{\alpha, \textbf{e}_n}\|_{L^{\infty}(B_1)} \leq \epsilon_1, \qquad \text{with} \qquad L_0 \le \lambda_{+} \leq \alpha \leq \lambda_{+}+M\|u-H_{\alpha, \textbf{e}_n}\|_{L^{\infty}(B_1)}, $$
then there exist $\textbf{e} \in \mathbb{S}^{n-1}$ and $\tilde{\alpha} \geq \lambda_{+}$, for which \eqref{E5} and \eqref{E6} hold.
\end{lemma}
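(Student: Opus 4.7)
The plan is a contradiction-and-linearization scheme in the spirit of De Silva's improvement-of-flatness method, specialized to the branching regime $\alpha\downarrow\lambda_+$. Assume the lemma fails: take $\epsilon_k\to 0^+$ and functions $u_k$ on $B_1$ satisfying $(a)$--$(c)$ of Theorem \ref{T2} with
\[
\delta_k := \norm{u_k - H_{\alpha_k, \textbf{e}_n}}_{L^\infty(B_1)} \le \epsilon_k, \qquad \lambda_+ \le \alpha_k \le \lambda_+ + M \delta_k,
\]
yet for which no $\tilde\alpha_k \ge \lambda_+$ and $\tilde{\textbf{e}}_k \in \mathbb{S}^{n-1}$ realize \eqref{E5}--\eqref{E6}. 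The companion slope $\beta_k \ge \lambda_-$ given by \eqref{E0-Con} then satisfies $\beta_k - \lambda_- = O(\delta_k)$. Introduce the normalized correctors $\tilde u_k(x) := (u_k(x) - H_{\alpha_k, \textbf{e}_n}(x))/\delta_k$, bounded by $1$ in $L^\infty(B_1)$.

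Next, I would extract a limit in each closed half-ball $\overline{B_{1/2}^\pm} := \overline{B_{1/2}} \cap \{\pm x_n \ge 0\}$. Combining the BMO gradient estimate (Proposition \ref{P0}), the scale-invariant convergence of minimizers (Proposition \ref{LConverg}), and a partial-Harnack oscillation decay for flat $p$-harmonic functions in each phase (modeled on arguments in \cite{MR2133664, MR4285137}), a subsequence of $\tilde u_k$ converges locally uniformly in $\overline{B_{1/2}^\pm}$ to Hölder-continuous limits $\tilde u^\pm$ with $\norm{\tilde u^\pm}_{L^\infty}\le 1$. A first-order expansion of $\mathrm{div}(|\nabla u_k|^{p-2}\nabla u_k) = 0$ in $\{u_k > 0\}$ around the linear profile $\alpha_k x_n$ (and analogously in $\{u_k < 0\}$ around $-\beta_k x_n$) yields, in the limit,
\[
\mathcal{L} \tilde u^\pm := \Delta \tilde u^\pm + (p-2)\,\partial_{nn}\tilde u^\pm = 0 \quad \text{in } B_{1/2}^\pm,
\]
a constant-coefficient uniformly elliptic equation.

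The decisive step is passing the viscosity free boundary system of Lemma \ref{L2} to the limit, identifying $(\tilde u^+, \tilde u^-)$ as a solution of a weighted linear two-membrane problem on $T := B_{1/2} \cap \{x_n = 0\}$. Testing $u_k$ from above and below at boundary points by two-phase perturbations of $H_{\alpha_k,\textbf{e}_n}$ that preserve the coupling \eqref{E0-Con}, the bounds $|\nabla u_k^\pm| \ge \lambda_\pm$ and the jump $|\nabla u_k^+|^p - |\nabla u_k^-|^p = \lambda_+^p - \lambda_-^p$ linearize at $(\alpha_k,\beta_k) \to (\lambda_+, \lambda_-)$ to a linear obstacle-type system for the traces $\tilde u^\pm|_T$, with weights $\lambda_+^{p-1}$ and $\lambda_-^{p-1}$ on the normal derivatives. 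This is the $p$-weighted analogue of the linearization employed in \cite[\S 1.3]{MR4285137} for $p=2$; the obstacle/ordering structure encodes the fact that in the branching regime the set $\{u_k=0\}$ separating the two phases has positive measure at every scale.

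By the Schauder theory for $\mathcal{L}$ combined with the boundary regularity for this two-membrane problem (following the template of \cite[Prop.\ 3.2]{MR4285137}, whose arguments extend to the constant-coefficient uniformly elliptic operator $\mathcal{L}$), there exist $\textbf{v} \in \mathbb{R}^{n-1}$, $a \in \mathbb{R}$ with $|\textbf{v}| + |a| \le C$ and a universal $\eta \in (0,1)$ such that $|\tilde u^\pm(x) - \textbf{v} \cdot x' - a\,x_n| \le C \rho^{1+\eta}$ on $B_\rho^\pm$ for every small $\rho$. Reinterpreting $H_{\alpha_k,\textbf{e}_n}(x) + \delta_k(\textbf{v} \cdot x' + a\,x_n)$ as a genuine two-plane $H_{\tilde\alpha_k,\tilde{\textbf{e}}_k}$ via Taylor inversion (with $\tilde\alpha_k := \alpha_k + \delta_k a$, $\tilde{\textbf{e}}_k$ the normalization of $\textbf{e}_n + (\delta_k/\alpha_k)\textbf{v}$, and $\tilde\beta_k$ adjusted by \eqref{E0-Con}), uniform convergence gives
\[
\norm{u_k - H_{\tilde\alpha_k,\tilde{\textbf{e}}_k}}_{L^\infty(B_\rho)} \le \tfrac{1}{2}\rho^{1+\eta}\delta_k + o(\delta_k),
\]
and fixing $\rho$ small but universal so that $\tfrac{1}{2}\rho^{1+\eta} \le \rho^{1+\gamma}$ (allowed since $\gamma<1/2<1+\eta$) contradicts the standing assumption. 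The main obstacle I anticipate is the third paragraph: building a genuinely two-phase family of barriers simultaneously perturbing $\{u_k>0\}$ and $\{u_k<0\}$ while enforcing \eqref{E0-Con}, so that the nonlinear $p$-th-power jump condition transfers cleanly to the weighted linear inequality on the limit traces, and verifying the oscillation decay uniformly through the vanishing zero-gap that is characteristic of branching points.
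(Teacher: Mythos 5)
Your overall architecture — contradiction, compactness via a partial boundary Harnack, passing the viscosity conditions to a weighted two-membrane problem for the linearized operator $\mathcal L_p=\Delta + (p-2)\partial_{nn}$, then invoking interior regularity to build an improved two-plane — is the same as the paper's. But two omissions in your set-up would actually break the reconstruction step, and a third acknowledged obstacle is where most of the technical work lives.

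First, your normalization $\tilde u_k=(u_k-H_{\alpha_k,\textbf{e}_n})/\delta_k$ cannot produce a \emph{single} linear correction $\textbf{v}\cdot x'+a\,x_n$ that works simultaneously on both half-balls when $\lambda_+\neq\lambda_-$: matching $H_{\tilde\alpha,\tilde{\textbf e}}-H_{\alpha_k,\textbf e_n}$ on $\{x_n>0\}$ and $\{x_n<0\}$ with the same $a$ forces $\tilde\alpha-\alpha_k=\tilde\beta-\beta_k$, which is incompatible with $\tilde\alpha^p-\tilde\beta^p=\lambda_+^p-\lambda_-^p$ for $p\ne1$, and matching the tangential coefficient forces $\tilde\alpha=\tilde\beta$. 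The paper sidesteps this by normalizing each phase separately, $v_{\pm,k}=(u_k\mp\alpha_k x_n^+)/(\epsilon_k\alpha_k)$ resp.\ $(u_k+\beta_k x_n^-)/(\epsilon_k\beta_k)$, so the limiting linearization in Lemma \ref{LL2} has a kinked linear part $\textbf v\cdot x'+s\,x_n^+-t\,x_n^-$ with $\lambda_+^p s=\lambda_-^p t$, and $\tilde\alpha_k=\alpha_k(1+\epsilon_k s)+o(\epsilon_k)$, $\tilde\beta_k=\beta_k(1+\epsilon_k t)+o(\epsilon_k)$ are then consistent with \eqref{E0-Con}. Second, you drop the parameter $\ell=\lambda_+^p\lim_k(\alpha_k^p-\lambda_+^p)/(p\alpha_k^p\epsilon_k)$ entirely. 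This $\ell$ is what records the finite rate $\alpha_k\to\lambda_+$; it enters the limiting two-membrane problem \eqref{E13} as an affine shift ($\lambda_\pm^p\partial_n v_\pm+\ell\ge0$, $=0$ on the jump set) and the regularity estimate of Lemma \ref{LL2} produces the constraint $\lambda_+^p s=\lambda_-^p t\ge-\ell$. That one-sided bound is precisely what lets one choose a vanishing correction $\delta_k$ so that $\tilde\alpha_k=\alpha_k(1+\epsilon_k s)+\delta_k\epsilon_k\ge\lambda_+$, i.e.\ so that the new profile is admissible. Without $\ell$ you have no mechanism to guarantee $\tilde\alpha\ge\lambda_+$, and the lemma's conclusion fails.

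Finally, the step you flag as the ``main obstacle'' — constructing genuinely two-phase barriers compatible with \eqref{E0-Con} that deliver a partial Harnack across a zero set of positive measure — is where the paper puts almost all of the analysis: Lemma \ref{L7} (an oscillation-decay for $p$-harmonic functions trapped between translates of a half-plane) feeds into Lemma \ref{L8}, which deforms a sub/super-solution of the form $\lambda_+(1\mp\tau\epsilon/2)(x_n+\cdot-ct\epsilon\varphi)^+ - \lambda_-(1-c_1\eta\epsilon)(x_n+\cdot)^-$ along a radial barrier $\varphi$ satisfying $(\varphi.2)$, and rules out first touching at each of the three free-boundary strata via the viscosity conditions of Lemma \ref{L2}. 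Your sketch has no replacement for this; the BMO estimate and Proposition \ref{LConverg} alone produce a compact sequence but not the equi-Hölder regularity \eqref{Eq26} of the free boundary graphs needed to pass through Lemma \ref{L5.5}.
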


\begin{lemma}[Improvement of flatness: non-branching points]
\label{L5}
For every $1<p<\infty$, $ 0<L_0,  L_1 $ and $\gamma \in (0,1)$, there exist $\epsilon_2=\epsilon_2(p,\gamma, n, L_0, L_1)$, $\overline{M}=\overline{M}(p,\gamma, n, L_0, L_1)$, $\rho=\rho(p,\gamma, n, L_0, L_1)$ and $C_2=C_2(p,\gamma, n, L_0, L_1)$ such that 
if function $u: B_1 \to \mathbb{R}$ satisfies $(a)-(b)-(c)$ of Theorem \ref{T2} and furthermore
$$ \|u-H_{\alpha, \textbf{e}_n}\|_{L^{\infty}(B_1)} \leq \epsilon_2, \qquad \text{with} \qquad \alpha \geq \max( \lambda_{+}, L_0) + \overline{M} \|u-H_{\alpha, \textbf{e}_n}\|_{L^{\infty}(B_1)}, $$
then there exist $\textbf{e} \in \mathbb{S}^{n-1}$ and $\tilde{\alpha} \geq \max( \lambda_{+}, L_0)$, for which \eqref{E5} and \eqref{E6} hold.
\end{lemma}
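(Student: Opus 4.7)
The argument is a compactness–linearization contradiction in the spirit of \cite{MR4285137} (which treats $p=2$), adapted to the nonlinear $p$-Laplacian. Suppose the conclusion fails: for some $\rho$ to be fixed below, there exists a sequence $u_k : B_1 \to \mathbb{R}$ satisfying $(a)$–$(c)$ of Theorem \ref{T2} with
$$\epsilon_k := \norm{u_k - H_{\alpha_k, \textbf{e}_n}}_{L^\infty(B_1)} \to 0, \qquad \alpha_k \geq \max(\lambda_+, L_0) + M_k \epsilon_k, \qquad M_k \to \infty,$$
yet no admissible $(\tilde\alpha_k, \tilde{\textbf{e}}_k)$ satisfies both \eqref{E5} and \eqref{E6}. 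Let $\beta_k$ be defined by $\alpha_k^p - \beta_k^p = \lambda_+^p - \lambda_-^p$; up to a subsequence $\alpha_k \to \alpha_\infty \in [\max(\lambda_+, L_0), L_1]$ and $\beta_k \to \beta_\infty > 0$. Because $M_k \to \infty$, the margin $\alpha_k - \lambda_+ \gg \epsilon_k$ persists in the linearization, in contrast to Lemma \ref{L4}, and makes the limiting problem uniformly elliptic.

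\textbf{Normalization and compactness.} Introduce
$$w_k^+(x) := \frac{u_k(x) - \alpha_k x_n}{\epsilon_k} \text{ in } \Omega_{u_k}^+ \cap B_{3/4}, \qquad w_k^-(x) := \frac{u_k(x) - \beta_k x_n}{\epsilon_k} \text{ in } \Omega_{u_k}^- \cap B_{3/4}.$$
Since $\norm{u_k - H_{\alpha_k,\textbf{e}_n}}_\infty \le \epsilon_k$ and $\alpha_k,\beta_k$ are bounded away from $0$, the points of $F(u_k)\cap B_{1/2}$ must lie in $\{|x_n| \le C\epsilon_k\}$, hence $|\{u_k=0\}\cap B_{1/2}|\to 0$; combining Lemma \ref{L2} and Proposition \ref{P1.5} then shows that for $k$ large the origin is an interior two-phase point and $F(u_k) \cap B_{1/2}$ is a single graphical interface. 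In each phase near this interface, the flatness assumption keeps $|\nabla u_k|$ uniformly close to $\alpha_k$ (resp.\ $\beta_k$), so $\Delta_p u_k = 0$ reads, up to an $O(\epsilon_k)$ error, as the linear uniformly elliptic equation
$$L w_k^\pm := \Delta w_k^\pm + (p-2)\, \partial_{nn} w_k^\pm = 0.$$
A boundary Harnack / oscillation estimate for $w_k^\pm$ up to the graphical part of $F(u_k)$, following the ideas in \cite{MR2133664}, then yields uniform equicontinuity of $w_k^\pm$ on compact subsets of $\overline{B_{1/2}^\pm} := \overline{B_{1/2} \cap \{\pm x_n \geq 0\}}$.

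\textbf{Linearized transmission problem and its regularity.} Passing to a further subsequence, $w_k^\pm \to w^\pm$ locally uniformly, and $(w^+, w^-)$ solves the constant-coefficient transmission problem
\begin{equation*}
L w^\pm = 0 \ \text{in } B_{1/2}^\pm, \qquad \beta_\infty w^+ = \alpha_\infty w^- \ \text{and}\ \alpha_\infty^{p-1} \partial_n w^+ = \beta_\infty^{p-1} \partial_n w^- \ \text{on } \{x_n = 0\}.
\end{equation*}
The Dirichlet-type condition encodes the first-order displacement $x_n = \epsilon_k h_k(x') \to \epsilon_k h(x')$ of $F(u_k)$ (giving $w^+|_{x_n=0} = -\alpha_\infty h$ and $w^-|_{x_n=0} = -\beta_\infty h$), while the Neumann-type condition is the first-order expansion of $|\nabla u^+|^p - |\nabla u^-|^p = \lambda_+^p - \lambda_-^p$, derived from the viscosity relations in Lemma \ref{L2} applied to tilted half-plane comparison functions. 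After the change of variable $y_n = x_n/\sqrt{p-1}$, the operator $L$ becomes the Laplacian, and the problem reduces to a classical elliptic transmission problem with piecewise constant conormal coefficients; standard regularity produces, for any $\sigma < 1$, a linear function $\ell^\pm(x) = a'\cdot x' + b^\pm x_n$ (with $b^\pm$ paired by the Neumann condition) such that
$$\norm{w^\pm - \ell^\pm}_{L^\infty(B_\rho^\pm)} \leq C_0\, \rho^{1+\sigma}\, \norm{w^\pm}_{L^\infty(B_{1/2}^\pm)} \leq C_0\, \rho^{1+\sigma}.$$

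\textbf{Back to $u_k$ and main obstacle.} Fix $\sigma \in (\gamma, 1)$ and then $\rho$ so that $2 C_0\, \rho^{\sigma - \gamma} \leq 1$. Setting
$$\tilde\alpha_k := \alpha_k + \epsilon_k b^+, \qquad \tilde{\textbf{e}}_k := \frac{\textbf{e}_n + \epsilon_k a'/\alpha_k}{|\textbf{e}_n + \epsilon_k a'/\alpha_k|},$$
the Neumann transmission condition on $\ell^\pm$ ensures that the partner $\tilde\beta_k$ (defined by $\tilde\alpha_k^p - \tilde\beta_k^p = \lambda_+^p - \lambda_-^p$) matches $\beta_k + \epsilon_k b^-$ to first order, so $H_{\tilde\alpha_k, \tilde{\textbf{e}}_k}$ is admissible and $\tilde\alpha_k \geq \max(\lambda_+, L_0)$ for $k$ large. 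Translating the approximation bound on $w^\pm$ back through the rescaling $u_{k,\rho}$ gives $\norm{u_{k,\rho} - H_{\tilde\alpha_k, \tilde{\textbf{e}}_k}}_{L^\infty(B_1)} \leq \rho^\gamma \epsilon_k$, which is \eqref{E6}, while $|\tilde\alpha_k - \alpha_k| + |\tilde{\textbf{e}}_k - \textbf{e}_n| = O(\epsilon_k)$ is \eqref{E5}; this contradicts the standing assumption. The main technical obstacle is the uniform $C^{0,\alpha}$ equicontinuity of $w_k^\pm$ up to the graphical free boundary: for $p \neq 2$ this requires boundary Harnack estimates for the anisotropic linearization $L$ on $O(\epsilon_k)$-thin one-sided slabs, combined with careful control of the error terms coming from the nonlinearity of $\Delta_p$ and the curvature of $F(u_k)$; a secondary delicate step is the rigorous derivation of the Neumann transmission condition in the limit, since the nonlinear coefficients $\alpha^{p-1}, \beta^{p-1}$ must be produced in the correct linear combination from the viscosity free boundary relation.
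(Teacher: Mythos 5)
Your plan is essentially the same compactness--linearization contradiction argument that the paper uses. The proof of Lemma~\ref{L5} in the paper is deliberately terse because it offloads the real work to Lemma~\ref{L5.5} (compactness via the partial Harnack Lemma~\ref{L9}), Lemma~\ref{L6} (identification of the transmission problem \eqref{E12}), and Lemma~\ref{LL1} (decay for the transmission problem after the change of variables $y_n = x_n/\sqrt{p-1}$); you have unpacked precisely that chain. Your normalization differs --- you define $w_k^\pm = (u_k - \alpha_k x_n)/\epsilon_k$ and $(u_k - \beta_k x_n)/\epsilon_k$ rather than the paper's $v_{\pm,k}$ which carry additional factors of $\alpha_k$, $\beta_k$ --- but the two are related by $w^+ = \alpha_\infty v_+$, $w^- = \beta_\infty v_-$, so your transmission conditions $\beta_\infty w^+ = \alpha_\infty w^-$ and $\alpha_\infty^{p-1}\partial_n w^+ = \beta_\infty^{p-1}\partial_n w^-$ are exactly the paper's $v_+ = v_-$ (which follows from $\mathcal{J} = \emptyset$ when $\ell=\infty$) and $\alpha_\infty^p\partial_n v_+ = \beta_\infty^p\partial_n v_-$. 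Your construction of $\tilde\alpha_k$, $\tilde{\textbf{e}}_k$ and the use of $M_k\to\infty$ to guarantee $\tilde\alpha_k \ge \max(\lambda_+,L_0)$ matches the paper's Lemma~\ref{L4} conclusion template.

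One step in your write-up is stated too strongly and is not justified as claimed: you assert that ``for $k$ large the origin is an interior two-phase point and $F(u_k)\cap B_{1/2}$ is a single graphical interface,'' deducing this from Lemma~\ref{L2} and Proposition~\ref{P1.5}. This does not follow --- neither the viscosity relations nor non-degeneracy give graphical regularity of $F(u_k)$ at finite $k$, and indeed establishing it before linearization would beg the question. The paper carefully avoids needing this: Lemma~\ref{L5.5} establishes Hausdorff convergence of the graphs of the normalized functions $v_{\pm,k}$ over $\overline{\Omega^\pm_{u_k}}$, with the oscillation decay coming from the one-sided barrier construction in Lemma~\ref{L9}, and the hypothesis $M_k\to\infty$ only enters there to rule out the one-phase touching cases in the contradiction argument. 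The set $\{u_k = 0\}$ is merely confined to an $O(\epsilon_k)$ slab; it need not be a graph. Since you do not actually use graphicity anywhere downstream, this is an over-claim rather than a fatal gap, but it should be replaced by the Hausdorff-graph compactness statement. You do correctly identify the remaining genuine difficulties --- uniform equicontinuity of $w_k^\pm$ up to the free boundary and the rigorous passage to the Neumann transmission condition --- which the paper handles via the barrier function $\varphi$ in \eqref{E14} and the comparison-function machinery of Lemma~\ref{L12}, respectively.
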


\begin{proof}[Proof of Theorem \normalfont{\ref{T2}}]
The proof follows easily by combining the Lemmas \ref{L4} and \ref{L5}.
\end{proof}

In order to prove Lemma \ref{L4} and Lemma \ref{L5}, we will argue by contradiction. Hence in the following, we consider a sequence $u_k$ of minimizers such that
\begin{equation}
\label{E7}
\epsilon_k:= \|u_k-H_{\alpha_k, \textbf{e}_n}\|_{L^{\infty}(B_1)}  \to 0, \qquad \text{and} \qquad \lambda_{+} \leq \alpha_k \leq L_1.
\end{equation}
We also set 
\begin{equation}
\label{E8}
\ell:= \lambda_{+}^{p} \lim_{k \to \infty} \frac{\alpha_k^{p}- \lambda_{+}^p }{p\alpha_k^{p} \epsilon_k}= \lambda_{-}^{p} \lim_{k \to \infty} \frac{\beta_k^p- \lambda_{-}^p }{p\beta_k^{p} \epsilon_k},
\end{equation}
which we can assume to exist up to  a subsequence. It might be useful to keep in mind that $\ell=\infty$ will correspond to Lemma \ref{L5} while $0 \leq \ell < \infty$ (so $\alpha_k\to \lambda_+$ and $\lambda_+ \ge L_0$)  to Lemma \ref{L4}. 

We first show that the sequence
\begin{equation}
\label{E9}
v_k(x)=
\begin{cases}
v_{+,k}(x):=\dfrac{u_k(x)-\alpha_k x_n^+}{ \epsilon_k\alpha_k}, \qquad & x \in \Omega_{u_k}^+ \cap B_1 \\
\\
v_{-,k}(x):=\dfrac{u_k(x)+\beta_k x_n^-}{ \epsilon_k\beta_k}, \qquad & x \in \Omega_{u_k}^- \cap B_1
\end{cases}
\end{equation}
is compact in some suitable sense. 
This will be mentioned in Lemma \ref{L5.5} below and the proof will come in Subsection \ref{3-1}. 
Then, in Lemma \ref{L6}, we obtain the limiting problem which is solved by $v$, the limit of $v_k$. 
Finally, in Subsection \ref{3-3} we show how to deduce Lemma \ref{L5} and Lemma \ref{L4} from Lemma \ref{L5.5} and Lemma \ref{L6}. 

In the following, we will denote with
$$ B^{\pm}_r:=B_r \cap \{x_n^{\pm}>0\}, \qquad \text{for every $r>0$}. $$

\begin{lemma}[Compactness of the linearizing sequence $v_k$]
\label{L5.5}
Let $u_k$ be a sequence of functions satisfying $(a)-(b)-(c)$ of Theorem \ref{T2} uniformly in $k$ and let $\epsilon_k$ and $\alpha_k$ be as in \eqref{E7} and let $v_k$ be defined by \eqref{E9}. Then there are H\"older continuous functions 
$$ v_+: \overline{B^+_{\frac{1}{2}}} \to \mathbb{R}, \qquad \text{and} \qquad v_-: \overline{B^-_{\frac{1}{2}}} \to \mathbb{R}, $$
with
$$ v_+ \leq v_- \qquad \text{on} \quad B_{\frac{1}{2}} \cap \{ x_n=0\}, \qquad v_+(0)=v_-(0)=0, $$
and such that the sequence of closed graphs
$$ \Gamma^{\pm}_k:=\left\{ (x,v_{\pm,k}(x)) \, : \, x \in \overline{\Omega^{\pm}_{u_k} \cap B_{\frac{1}{2}}} \right\}, $$
converge, up to a  subsequence, in the Hausdorff distance to the closed graphs
$$ \Gamma_{\pm}=\left\{ (x,v_{\pm}(x)) \, : \, x \in \overline{B^{\pm}_{\frac{1}{2}}} \right\}. $$
In particular, the following claims hold.
\begin{enumerate}[label=(\roman*)]
\item For every $\delta>0$, $v_{\pm,k}$ converges uniformly to $v_{\pm}$ on $B_{\frac{1}{2}} \cap \{\pm x_n>\delta\}$.
\item For every sequence $x_k \in \overline{\Omega^{\pm}_{u_k}} \cap B_1 $ converging to $x \in \overline{B^{\pm}_{\frac{1}{2}}}$, we have 
$$ v_{\pm}(x)=\lim_{k \to \infty} v_{\pm,k}(x_k). $$
\item For every $x \in \{x_n=0\} \cap B_{\frac{1}{2}}$, we have 
$$ v_{\pm}(x)=- \lim_{k \to \infty} \frac{x_k \cdot \textbf{e}_n}{\epsilon_k}, \qquad \text{for any sequence \,\,\, $\partial \Omega^{\pm}_{u_k} \ni x_k \to x$}. $$
\end{enumerate}
In particular, $\{x_n=0\} \cap \overline{B_{\frac{1}{2}}}$ decomposes into an open jump set
$$ \mathcal{J}=\{v_+<v_-\} \cap \{x_n=0\} \cap \overline{B_{\frac{1}{2}}}, $$
and its complementary contact set
$$ \mathcal{C}=\{v_+=v_-\} \cap \{x_n=0\} \cap \overline{B_{\frac{1}{2}}}. $$
Furthermore, if $x \in \mathcal{J}$, then 
\begin{equation}
\label{E10}
\liminf_{k \to \infty} \mathrm{\,dist} \left(x, \partial \Omega^+_{u_k} \cap \partial \Omega^-_{u_k} \right) >0.
\end{equation}
In particular for all $x \in \mathcal{J}$, there exists two sequences $x^{\pm}_k \in \Gamma^{\pm}_{k,\mathrm{OP}}$ such that $x^{\pm}_k \to x$.
\end{lemma}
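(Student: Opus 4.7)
The plan is to establish uniform $L^{\infty}$ bounds and H\"older estimates for $v_{\pm,k}$, pass to a Hausdorff-convergent subsequence of graphs via Blaschke selection, and then verify the ordering and non-degeneracy on $\{x_n=0\}$. I focus on $v_{+,k}$, since the $-$ case is entirely symmetric.

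\emph{Uniform $L^{\infty}$ bound and interior regularity.} The flatness $\|u_k-H_{\alpha_k,\textbf{e}_n}\|_{L^{\infty}(B_1)}\le\epsilon_k$ gives $|u_k-\alpha_k x_n|\le\epsilon_k$ on $\Omega^+_{u_k}\cap\{x_n\ge 0\}$, hence $|v_{+,k}|\le 1/\alpha_k\le 1/L_0$; on $\Omega^+_{u_k}\cap\{x_n<0\}$, $0<u_k\le\beta_k x_n+\epsilon_k\le\epsilon_k$, so again $0\le v_{+,k}\le 1/\alpha_k$. Thus $\|v_{+,k}\|_{L^{\infty}}\le C$ uniformly. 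Moreover, flatness forces $\{x_n>\epsilon_k/\alpha_k\}\cap B_1\subset\Omega^+_{u_k}$, so for any fixed $\delta>0$ and $k$ large, $u_k$ is $p$-harmonic on $\{x_n>\delta\}\cap B_{3/4}$ and uniformly close to $\alpha_k x_n$. By $C^{1,\sigma}$ regularity of $p$-harmonic functions, $\nabla u_k\to\alpha_k\textbf{e}_n$ uniformly on compact subsets of $\{x_n>0\}$. Linearizing $\Delta_p u_k=0$ around this nondegenerate limiting gradient yields a uniformly elliptic linear equation for $v_{+,k}$, and standard Schauder-type estimates give uniform $C^{0,\sigma}$ bounds on $\{x_n\ge\delta\}\cap B_{1/2}$.

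\emph{Regularity up to $\{x_n=0\}$ and Hausdorff limit.} The main technical step is to extend the H\"older modulus of $v_{+,k}$ uniformly up to the free boundary $\partial\Omega^+_{u_k}\subset\{|x_n|\le C\epsilon_k\}$. The strategy is to compare $u_k$ with vertical translates $H_{\alpha_k,\textbf{e}_n}(\cdot-\epsilon_k s\textbf{e}_n)$ of the two-plane and exploit the partial Harnack / improvement-of-flatness machinery for one-phase $p$-Bernoulli problems (cf.\ \cite{MR2133664}): such comparisons force a geometric decay of $\mathrm{osc}\,v_{+,k}$ across dyadic scales adjacent to $\partial\Omega^+_{u_k}$, iterating to a uniform H\"older estimate on $\overline{\Omega^+_{u_k}\cap B_{1/2}}$. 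Armed with this, the graphs $\Gamma^{\pm}_k$ lie in a fixed compact subset of $\mathbb{R}^{n+1}$, and Blaschke selection yields Hausdorff-convergent subsequences with closed limits $\Gamma_{\pm}$. Since $\overline{\Omega^{\pm}_{u_k}}\to\overline{\{\pm x_n\ge 0\}}\cap\overline{B_{1/2}}$ in the Hausdorff sense, the limits $\Gamma_{\pm}$ are graphs of H\"older functions $v_{\pm}$ on $\overline{B^{\pm}_{1/2}}$, proving (i)--(ii). Assertion (iii) then follows by evaluating the explicit formula for $v_{\pm,k}(x_k)$ at free-boundary points, where $u_k(x_k)=0$, and letting $x_k\to x\in\{x_n=0\}$.

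\emph{Ordering, jump set, and non-degeneracy.} Because $\{u_k=0\}$ separates $\Omega^+_{u_k}$ from $\Omega^-_{u_k}$ and both are nearly $\{\pm x_n>0\}$, over each vertical line through $\{x_n=0\}\cap B_{1/2}$ the lowest point of $\partial\Omega^+_{u_k}$ lies above the highest point of $\partial\Omega^-_{u_k}$; passing to the limit via (iii) gives $v_+\le v_-$ on $\{x_n=0\}$. If $x\in\mathcal{J}$, continuity of $v_{\pm}$ yields a neighborhood in $\{x_n=0\}$ on which $v_--v_+\ge c>0$, so on the vertical cylinder above it the two free boundaries $\partial\Omega^{\pm}_{u_k}$ are separated by $\gtrsim c\epsilon_k$ and cannot meet; this proves \eqref{E10} and places $x^{\pm}_k$ on the one-phase parts $\Gamma^{\pm}_{k,\mathrm{OP}}$. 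The central obstacle of the whole argument is the boundary H\"older step: one must propagate the regularity of $v_{+,k}$ uniformly up to an only approximately flat free boundary for the degenerate-elliptic $p$-Laplace operator, which requires careful adaptation of the $p=2$ boundary-Harnack techniques of \cite{MR4285137} to general $p$.
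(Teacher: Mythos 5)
Your architecture matches the paper's at a high level: interior uniform estimates from $p$-harmonic regularity, a boundary H\"older estimate for $v_{\pm,k}$ up to the (flat but only $\epsilon_k$-close to $\{x_n=0\}$) free boundary, Hausdorff convergence of the graphs, the ordering $v_+\le v_-$ from the vertical separation of $\partial\Omega^+_{u_k}$ and $\partial\Omega^-_{u_k}$, and finally \eqref{E10}. Your $L^\infty$ and interior H\"older steps, and your arguments for the ordering and the jump-set non-degeneracy, are essentially the paper's.

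The genuine gap is at the step you yourself single out as ``the central obstacle,'' and it is a gap of substance, not only of detail. You propose to get oscillation decay for $v_{+,k}$ by ``exploiting the partial Harnack / improvement-of-flatness machinery for one-phase $p$-Bernoulli problems (cf.~\cite{MR2133664}),'' applied phase by phase. This cannot work as stated, because the one-phase improvement-of-flatness rests on the one-phase boundary condition $|\nabla u^+|=\lambda_+$, which fails at the two-phase and branching points that are present in $B_1$ under \eqref{E7}: there one has instead $|\nabla u^+|\geq\lambda_+$ together with the transmission identity $|\nabla u^+|^p-|\nabla u^-|^p=\lambda_+^p-\lambda_-^p$, so the oscillation of $v_{+,k}$ is unavoidably coupled to that of $v_{-,k}$. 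The paper resolves this by proving \emph{two-phase} partial boundary Harnack inequalities (Lemmas \ref{L8} and \ref{L9}), whose barriers are full two-plane competitors; the touching point is then analyzed case by case in $\Gamma^{\pm}_{\mathrm{OP}}$ versus $\Gamma_{\mathrm{TP}}$, and the improvement is propagated for the ordered quadruple $b_+\leq b_-\leq a_-\leq a_+$ \emph{simultaneously}, not for each phase separately. Relatedly, you do not distinguish the regimes $0\leq\ell<\infty$ and $\ell=\infty$; the paper must, because they require differently renormalized sequences (the auxiliary $w_k$ with denominators $\lambda_\pm$ when $\alpha_k\to\lambda_+$, versus $v_k$ itself when $\alpha_k$ stays uniformly above $\lambda_+$) and the two distinct Lemmas \ref{L8} and \ref{L9}: in the latter, the extra hypothesis $\alpha\geq\max(\lambda_+,L_0)+M\epsilon$ is what rules out the touching point being a one-phase point on the wrong side. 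Without a two-phase Harnack lemma and the case split on $\ell$, the compactness claim has a real hole at its central step; your closing remark that one needs a ``careful adaptation of the $p=2$ boundary-Harnack techniques of \cite{MR4285137}'' points in the right direction, but is inconsistent with the earlier appeal to one-phase tools and does not substitute for the missing two-phase lemma.
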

Now, in the next lemma, we determine the limiting problem for the function $v$ which is defined as
\begin{equation}
\label{E11}
v(x)=
\begin{cases}
v_{+}(x), &  \qquad \text{for $x \in B^+_{\frac{1}{2}}$},  \\
v_{-}(x), &  \qquad \text{for $x \in B^-_{\frac{1}{2}}$},
\end{cases}
\end{equation}
where $v_+$ and $v_-$ are the functions defined in Lemma \ref{L5.5}.

In what follows, we will denote with
$$ \mathcal{L}_p(u):= \Delta u + (p-2) \partial_{nn} u, $$
the frequently used operator which appears in the linearized problem. 

\begin{lemma}[The "linearized" problem]
\label{L6}
Let $u_k$, $\epsilon_k$ and $\alpha_k$ be as in \eqref{E7}, $v_k$ be defined by \eqref{E9} and $\ell$ as in \eqref{E8}. Let also $v_{\pm}$ be as in Lemma \ref{L5.5}:

If \textbf{ $\ell=\infty$}, then $\mathcal{J}=\emptyset$ and $v_{\pm}$ are viscosity solutions of the following transmission problem:
\begin{equation}
\label{E12}
\begin{cases}
\mathcal{L}_p(v_{\pm})=\Delta v_{\pm} + (p-2) \partial_{nn} v_{\pm} = 0, \qquad & \mathrm{in} \quad B^{\pm}_{\frac{1}{2}}, \\
\alpha_{\infty}^{p} \partial_n v_+ = \beta_{\infty}^{p} \partial_n v_-, \qquad & \mathrm{on} \quad B^{\pm}_{\frac{1}{2}} \cap \{x_n=0\},
\end{cases}
\end{equation}
where $\alpha_{\infty}=\lim_{k \to \infty} \alpha_k$ and $\beta_{\infty}=\lim_{k \to \infty} \beta_k$, which we can assume to exist up to extracting a further subsequence. 

If \textbf{$0 \leq \ell <\infty$}, then $v_{\pm}$ are viscosity solutions of the following two membranes problem:
\begin{equation}
\label{E13}
\begin{cases}
\mathcal{L}_p(v_{\pm})=\Delta v_{\pm} + (p-2) \partial_{nn} v_{\pm} = 0, \qquad & \mathrm{in} \quad B^{\pm}_{\frac{1}{2}}, \\
\lambda_{\pm}^{p} \partial_n v_{\pm} + \ell \geq 0,  \qquad & \mathrm{in} \quad B_{\frac{1}{2}} \cap \{x_n=0\}, \\
\lambda_{\pm}^{p} \partial_n v_{\pm} + \ell = 0,  \qquad & \mathrm{in} \quad \mathcal{J}, \\
\lambda_{+}^{p} \partial_n v_+ = \lambda_{-}^{p} \partial_n v_-, \qquad & \mathrm{in} \quad \mathcal{C}, \\
v_+ \leq v_-, \qquad & \mathrm{in} \quad B_{\frac{1}{2}} \cap \{x_n=0\}. \\
\end{cases}
\end{equation}
\end{lemma}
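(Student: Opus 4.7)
My plan is to convert each smooth viscosity test function $\phi$ for $v_\pm$ into a comparison function $Q_k$ for $u_k$, apply Lemma \ref{L2}, and pass to the limit by means of Lemma \ref{L5.5}. For the interior equations, suppose $\phi\in C^2$ touches $v_+$ strictly from above at $x_0\in B^+_{1/2}$ and set $\phi_k(x):=\alpha_k x_n+\epsilon_k\alpha_k\phi(x)$. The uniform convergence $v_{+,k}\to v_+$ supplied by Lemma \ref{L5.5} forces $\phi_k$ to touch $u_k$ from above at some $x_k\to x_0$ inside $\Omega^+_{u_k}$. A direct Taylor expansion gives
\[
\Delta_p\phi_k(x_k)=\epsilon_k\alpha_k^{p-1}\bigl(\mathcal{L}_p\phi(x_k)+O(\epsilon_k)\bigr),
\]
and $p$-harmonicity of $u_k$ yields $\Delta_p\phi_k(x_k)\le 0$, so $\mathcal{L}_p\phi(x_0)\le 0$. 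The symmetric touching from below gives $\mathcal{L}_p v_\pm=0$ in viscosity on $B^\pm_{1/2}$, in both regimes.

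For the free boundary conditions in the branching case $0\le\ell<\infty$ (where $\alpha_k\to\lambda_+$ and $(\alpha_k^p-\lambda_+^p)/(p\alpha_k^p\epsilon_k)\to\ell/\lambda_+^p$), fix $x_0\in\mathcal{J}$ and let $\phi_+$ touch $v_+$ from above at $x_0$; by Lemma \ref{L5.5} there is $x_k^+\in\Gamma^+_{\mathrm{OP}}$ with $x_k^+\to(x_0,0)$, and in a neighborhood $u_k^-\equiv 0$. The one-sided admissible comparison
\[
Q_k^+(x):=\alpha_k\bigl(x_n+\epsilon_k\phi_+(x)\bigr)^+,
\]
after a tiny vertical shift, touches $u_k$ from above at a point $y_k\in\Gamma^+_{\mathrm{OP}}$ converging to $(x_0,0)$, and Lemma \ref{L2}(B.1) gives $|\nabla Q_k^+(y_k)|^p\ge\lambda_+^p$, i.e.\
\[
\alpha_k^p\bigl(1+p\epsilon_k\partial_n\phi_+(y_k)+O(\epsilon_k^2)\bigr)\ge\lambda_+^p,
\]
which after dividing by $p\alpha_k^p\epsilon_k$ and letting $k\to\infty$ becomes $\lambda_+^p\partial_n\phi_+(x_0)+\ell\ge 0$. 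The analogous touching from below combined with (A.1) gives the opposite sign, so $\lambda_+^p\partial_n v_++\ell=0$ on $\mathcal{J}$ in viscosity. For $x_0\in\mathcal{C}$, choose $\phi_\pm$ with $\phi_\pm(x_0)=v_\pm(x_0)$, $\phi_\pm\ge v_\pm$ on $B^\pm$, and $\phi_+\le\phi_-$ locally (consistent with $v_+\le v_-$), and form
\[
Q_k(x):=\alpha_k\bigl(x_n+\epsilon_k\phi_+(x)\bigr)^+-\beta_k\bigl(x_n+\epsilon_k\phi_-(x)\bigr)^-.
\]
Both branches $\partial\{Q_k\gtrless 0\}$ are smooth graphs, so $Q_k$ is admissible; arranging that $Q_k$ touches $u_k$ from above at a two-phase point $\tilde y_k\to(x_0,0)$, Lemma \ref{L2}(B.3) together with $\alpha_k^p-\beta_k^p=\lambda_+^p-\lambda_-^p$ (Corollary \ref{cor:bndry-condition-global-sol}) yields, after expanding $|\nabla Q_k^\pm|^p$ in $\epsilon_k$, both $\lambda_\pm^p\partial_n\phi_\pm(x_0)+\ell\ge 0$ and $\lambda_+^p\partial_n\phi_+(x_0)\ge\lambda_-^p\partial_n\phi_-(x_0)$; the reverse of the latter comes from touching from below and (A.3), closing \eqref{E13}.

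The case $\ell=\infty$ is handled by the same scheme once one shows $\mathcal{J}=\emptyset$: if not, a point $x_0\in\mathcal{J}$ would, via the one-phase points $x_k^+\to(x_0,0)$, allow touching $u_k^+$ from below by $\tilde\alpha_k(x_n+\epsilon_k\phi_+)^+$ for $\tilde\alpha_k<\alpha_k$ arbitrarily close to $\alpha_k$; Lemma \ref{L2}(A.1) would force $\tilde\alpha_k\le\lambda_+$, contradicting $(\alpha_k^p-\lambda_+^p)/\epsilon_k\to\infty$. With $\mathcal{J}=\emptyset$, rerunning the $\mathcal{C}$-argument (now with $\alpha_k\to\alpha_\infty$, $\beta_k\to\beta_\infty$, and with the bounds $\lambda_\pm^p\partial_n\phi_\pm+\ell\ge 0$ becoming vacuous because $\ell=\infty$) produces the transmission $\alpha_\infty^p\partial_n v_+=\beta_\infty^p\partial_n v_-$ of \eqref{E12}. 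The main obstacle is that at $\mathcal{C}$-points $Q_k$ carries a shrinking zero set $\{-\epsilon_k\phi_-\le x_n\le-\epsilon_k\phi_+\}$, so the touching point between $Q_k$ and $u_k$ need not a priori fall on the two-phase free boundary of $Q_k$---the only place where Lemma \ref{L2}(B.3) directly applies; forcing this alignment by a careful vertical shift of $Q_k$ together with the accumulation of $\Gamma_{\mathrm{TP}}$ at $(x_0,0)$ and the inequality $v_+\le v_-$ is the most delicate bookkeeping of the proof.
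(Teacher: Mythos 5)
Your high-level strategy is the same as the paper's: touch $v_\pm$ with a test function $\phi$, convert to a comparison function $Q_k$ for $u_k$ of the form $\alpha_k(x_n+\epsilon_k\phi)^+ - \beta_k(x_n+\epsilon_k\phi)^-$, apply Lemma~\ref{L2} at the contact point, expand $|\nabla Q_k^\pm|^p$ in powers of $\epsilon_k$, and pass to the limit using \eqref{E7} and \eqref{E8}. The interior equation $\mathcal{L}_p(v_\pm)=0$ (your Step and the paper's Step 1) is essentially identical.

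The gap you flag at the end is genuine, and it is precisely the content the paper isolates into the dedicated technical Lemma~\ref{L12}: given a test polynomial touching $v_\pm$ or $v$ at a point on $\{x_n=0\}$, one must exhibit a contact point $x_k\to x_0$ lying on the appropriate stratum of $F(u_k)$ (one-phase or two-phase for $u_k$, not for $Q_k$) and a comparison function with the stated gradient expansion \eqref{E18}--\eqref{E20}. Your proposal names this ``delicate bookkeeping'' but does not resolve it, and without it the applications of Lemma~\ref{L2}(A.1), (A.3), (B.1), (B.3) are not justified. In particular, at a point $x_0\in\mathcal{C}$ the correct test function in Remark~\ref{R1}(3) has the rigid form $sx_n^+-tx_n^-+\tilde P$ with a \emph{single} $\tilde P$ satisfying $\partial_n\tilde P=0$; your choice of two independent $\phi_\pm$ with $\phi_+\le\phi_-$ creates exactly the ``fat'' zero strip $\{-\epsilon_k\phi_-\le x_n\le -\epsilon_k\phi_+\}$ that makes it impossible in general to guarantee the touching point sits on $\Gamma_{\mathrm{TP}}(u_k)$, and the paper's rigid test shape is chosen to avoid this.

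Your argument that $\mathcal{J}=\emptyset$ when $\ell=\infty$ is also not tight. Lowering the slope to $\tilde\alpha_k<\alpha_k$ ``arbitrarily close to $\alpha_k$'' cannot by itself produce a contradiction, since when $\ell=\infty$ one may still have $\alpha_k\to\lambda_+$ (only the ratio $(\alpha_k^p-\lambda_+^p)/\epsilon_k$ is unbounded), so a slightly smaller $\tilde\alpha_k$ need not violate $\tilde\alpha_k\le\lambda_+$. The paper's Step~2 instead builds a specific auxiliary polynomial $P(x)=A(n-\tfrac12)x_n^2-|x'-y'|^2-Bx_n$ with $\mathcal{L}_p(P)>0$ and then exploits that the \emph{normal derivative} $\partial_n P(x_0)=-B$ is a fixed finite constant, while Lemma~\ref{L12} and Lemma~\ref{L3}(ii) force $-B\le(\lambda_+^p-\alpha_k^p)/(p\alpha_k^p\epsilon_k)+o(1)\to-\infty$; that is the actual contradiction. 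So the mechanism is a large-normal-derivative test function, not a slight reduction of the touching slope, and your sketch should be revised to match.

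Finally, note the paper's Steps~4 and~5 together give both the global inequality $\lambda_\pm^p\partial_n v_\pm\ge -\ell$ on all of $\{x_n=0\}$ and the equality on $\mathcal{J}$; in your proposal only the equality on $\mathcal{J}$ is discussed, and the separate one-sided inequality on $\mathcal{C}$ (which enters the membrane system \eqref{E13}) is not established by your $\mathcal{C}$-argument as written.
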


\begin{remark}
\label{R1}
Here by viscosity solution of \eqref{E12} and \eqref{E13}, we mean a function $v$ as in \eqref{E11} such that $v_{\pm}$ are continuous in $\overline{B^{\pm}_{\frac{1}{2}}}$, $\mathcal{L}_p(v_{\pm})=0$ in $B^{\pm}_{\frac{1}{2}}$ (in viscosity or equivalently the classical sense) and such that the following holds.
\begin{itemize}
\item If we are in case \eqref{E12}, let $s,t \in \mathbb{R}$ and let $\tilde{P}$ be a quadratic polynomial such that $\partial_n \tilde{P}=0$. Suppose that $\mathcal{L}_p(\tilde{P}) \geq 0$ ($\mathcal{L}_p(\tilde{P}) \leq 0$) and that the function 
$$ P:=sx_n^+-tx_n^-+\tilde{P}, $$
touches $v$ strictly from below (above) at a point $x_0 \in B_{\frac{1}{2}} \cap \{x_n=0\}$, then
$$ \alpha^{p}_{\infty} s \leq \beta^{p}_{\infty} t, \qquad \left( \alpha^{p}_{\infty} s \geq \beta^{p}_{\infty} t \right). $$
\item If we are in case \eqref{E13} then
\begin{enumerate}
\item[(1)] if $P_{\pm}$ is a quadratic polynomial with $\mathcal{L}_p(P_{\pm}) \leq 0$ in $B^{\pm}_{\frac{1}{2}}$ touching $v_{\pm}$ strictly from above at $x_0 \in B_{\frac{1}{2}} \cap \{x_n=0\}$, then $\lambda^{p}_{\pm} \partial_n P_{\pm} \geq 0$;
\item[(2)] if $P_{\pm}$ is a quadratic polynomial with $\mathcal{L}_p(P_{\pm}) \geq 0$ in $B^{\pm}_{\frac{1}{2}}$ touching $v_{\pm}$ strictly from below at $x_0 \in \mathcal{J}$, then $\lambda^{p}_{\pm} \partial_n P_{\pm} \leq 0$;
\item[(3)] if $s,t \in \mathbb{R}$ and $\tilde{P}$ is a quadratic polynomial with $\mathcal{L}_p(P_{\pm}) \geq 0$ ($\mathcal{L}_p(P_{\pm}) \leq 0$) such that $\partial_n \tilde{P} = 0$ and  the function
$$P:=sx_n^+-tx_n^-+\tilde{P},$$
touches $v$ strictly from below (above) at a point $x_0 \in B_{\frac{1}{2}} \cap \{x_n=0\}$, then 
$$ \lambda^{p}_{+} s \leq \lambda^{p}_{-} t, \qquad \left( \lambda^{p}_{+} s \geq \lambda^{p}_{-} t \right). $$
\end{enumerate}
\end{itemize}
\end{remark}

\subsection{Compactness of the linearizing sequence}
\label{3-1}
As explained in \cite[Subsection 3.1]{MR4285137} for the case of classical two-phase Bernoulli problem, the authors declare that the key point in establishing suitable compactness for $v_k$ is a "partial Harnack" inequality. 
We will follow the same approach and  start with the following useful lemma.

\begin{lemma}
\label{L7}
There is a constant $\tau=\tau(n,p)>0$ such that the following holds. Assume that $v: B_1 \to \mathbb{R}$ is a continuous function with $\Delta_p v = 0$ in $\{v>0\}$ and   
$$ \lambda \left(x_n+b \right)^+\leq v(x) \leq \lambda \left(x_n+a \right)^+, \qquad x\in B_1,$$
for some $\lambda>0$ and $a,b \in (-\frac{1}{100}, \frac{1}{100})$. Let $P=(0, \cdots, 0, \frac{1}{2})$, then for all $\epsilon \in (0,\frac{1}{2})$
$$ v(P) \leq \lambda (1-\epsilon) \left(\frac{1}{2}+a \right)^+ \quad \Longrightarrow \quad v(x) \leq \lambda (1-\tau \epsilon) \left(x_n+a \right)^+ \qquad \mathrm{in} \quad B_{\frac{1}{4}}(0), $$
and
$$ v(P) \geq \lambda (1+\epsilon) \left(\frac{1}{2}+b \right)^+ \quad \Longrightarrow \quad v(x) \geq \lambda (1+\tau \epsilon) \left(x_n+b \right)^+ \qquad \mathrm{in} \quad B_{\frac{1}{4}}(0). $$
\end{lemma}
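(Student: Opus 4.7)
The two implications are symmetric, so I treat the first and normalise $\lambda=1$ by homogeneity. The plan is to linearise the $p$-Laplacian around the linear upper trap, apply the classical interior Harnack plus a Harnack chain to the resulting uniformly elliptic linear equation, and conclude via the comparison principle for $\Delta_p$.

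Setting $h(x):=x_n+a$ and $w:=h-v$, the bound $v\le(x_n+a)^+$ forces $\{v>0\}\subset\{x_n>-a\}$, on which $v$ is $p$-harmonic by hypothesis and $h$ is trivially $p$-harmonic. Hence $w\ge 0$ solves on $\{v>0\}$ the divergence-form linear equation
\[
\mathrm{div}\!\Bigl(\mathcal A(x)\nabla w\Bigr)=0,\qquad \mathcal A(x):=\int_0^1 DA_p\bigl((1-t)\nabla v(x)+t\,\textbf{e}_n\bigr)\,dt,
\]
with $A_p(\xi):=|\xi|^{p-2}\xi$. On the strip $D:=\{x_n>\tfrac1{50}\}\cap B_{3/4}$ the bilateral trap $(x_n+b)^+\le v\le(x_n+a)^+$ together with $|a|,|b|<\tfrac1{100}$ keeps $v$ close in $L^\infty$ to the linear function $x_n$; the interior $C^{1,\alpha}$ estimates for $p$-harmonic functions (DiBenedetto/Tolksdorf) then give $|\nabla v-\textbf{e}_n|$ uniformly small on $D':=\{x_n>\tfrac1{25}\}\cap B_{5/8}$, which makes $\mathcal A$ uniformly elliptic on $D'$ with universal constants.

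The classical Harnack inequality for the non-negative $w$ on $D'$ and a Harnack chain through the Lipschitz-connected set $\{x_n>\tfrac1{20}\}\cap B_{1/2}$ (which contains $P$) produce a universal $c_0>0$ with $w(y)\ge c_0\,w(P)$ for all such $y$. The hypothesis $v(P)\le(1-\epsilon)(\tfrac12+a)^+$ rewrites as $w(P)\ge\epsilon(\tfrac12+a)\ge\epsilon/4$, whence $w\ge c_1\epsilon$ on that good region. Now I compare $v$ with the linear competitor $\psi(x):=(1-\tau\epsilon)(x_n+a)^+$, which is $p$-harmonic in $\{x_n>-a\}\supset\{v>0\}$. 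On $\partial(B_{1/4}\cap\{v>0\})$ the free-boundary portion gives $v=0\le\psi$ for free, so only $\partial B_{1/4}\cap\overline{\{v>0\}}$ requires attention; there the inequality reads $w\ge\tau\epsilon(x_n+a)\le\tau\epsilon\cdot\tfrac{26}{100}$, and the bound $w\ge c_1\epsilon$ controls it for $x_n\ge\tfrac1{20}$ once $\tau$ is chosen small.

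For the remaining slab $\partial B_{1/4}\cap\{-a<x_n<\tfrac1{20}\}$ I extend the Harnack chain into $\{x_n>-b+\delta\}\cap B_{7/16}$ for a small universal $\delta>0$: the lower trap forces $v\ge\delta$ there, and the same proximity-to-linear-function argument plus interior $C^{1,\alpha}$ regularity keep $|\nabla v-\textbf{e}_n|$ uniformly small, restoring uniform ellipticity of $\mathcal A$ with constants depending only on $n,p,\delta$. Because $|a|,|b|<\tfrac1{100}$ leaves a universal gap between $\partial B_{1/4}\cap\{x_n>-a\}$ and $\{x_n\le-b\}$, the chain reaches every target and delivers $w\ge c_2\epsilon$ uniformly. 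Fixing $\tau=\tau(n,p)$ small then yields the boundary inequality, and the comparison principle for $\Delta_p$ propagates $v\le\psi$ throughout $B_{1/4}\cap\{v>0\}$; elsewhere in $B_{1/4}$ one has $v=0\le\psi$ trivially. The main obstacle I anticipate is exactly this last sub-step: keeping the linearised operator uniformly elliptic along a Harnack chain that must pass close to the potentially degenerate free boundary. The two-sided trap, combined with $|a|,|b|<\tfrac1{100}$, is precisely what reserves the universal gap and pushes the possible degeneracy into the thin slab $\{-a\le x_n\le-b\}$; without the lower bound $v\ge(x_n+b)^+$ the linearisation could collapse and a different (barrier-based) argument would be required.
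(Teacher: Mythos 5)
Your argument has a genuine gap at the point where you claim uniform ellipticity of the linearised matrix~$\mathcal A$. You write that the bilateral trap $\lambda(x_n+b)^+\le v\le\lambda(x_n+a)^+$ together with $|a|,|b|<\tfrac1{100}$ makes $v$ close to $\lambda x_n$ in $L^\infty$, and that interior $C^{1,\alpha}$ estimates then give $|\nabla v-\lambda\textbf{e}_n|$ uniformly small. This inference fails: the $L^\infty$ deviation from $\lambda x_n$ is bounded by $\lambda/50$, a \emph{fixed universal constant}, not a parameter that tends to $0$. The quantitative $C^{1,\alpha}$ estimate $\|\nabla v\|_{C^\alpha(B_{1/2})}\le C\|v\|_{L^\infty(B_1)}$ does not upgrade this to $\|\nabla v-\lambda\textbf{e}_n\|$ being small, and since $v\mapsto\nabla v$ is nonlinear in the $p$-Laplacian setting you cannot simply subtract $x_n$. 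One might argue via $C^{1,\alpha}$-compactness that the modulus $\omega(\delta)$ in $\|v-\lambda x_n\|_{L^\infty}\le\delta\Rightarrow\|\nabla v-\lambda\textbf e_n\|\le\omega(\delta)$ is continuous and $\omega(0^+)=0$, but nothing guarantees $\omega(1/50)$ is small enough for the uniform ellipticity you need (and in general it will not be, since $\nabla v$ can indeed vanish). So already on the interior strip $\{x_n>\tfrac1{50}\}$ the argument is unjustified, and when you push the chain into the thin slab $\{x_n>-b+\delta\}$, you are even closer to the set $\{v=0\}$ where degeneracy of $\nabla v$ is the \emph{expected} difficulty — the lemma's hypothesis places no nondegeneracy there. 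Moreover, even granting ellipticity on $\{x_n>-b+\delta\}$, the boundary check on $\partial B_{1/4}\cap\{-a<x_n\le -b+\delta\}$ (where $v$ may be positive but comparable to $0$) is simply left uncovered: your remark about a ``universal gap between $\partial B_{1/4}\cap\{x_n>-a\}$ and $\{x_n\le -b\}$'' does not hold, since these half-spaces overlap because $-a\le -b$.

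The paper's proof circumvents all of this by not attempting a boundary Harnack chain at all. It first produces a quantitative $\epsilon$-improvement on a small \emph{interior} ball near $P$, distinguishing two cases: if $|\nabla v(P)|\ge\lambda/4$, the linearised equation is uniformly elliptic in a small ball and classical Harnack applies; if $|\nabla v(P)|<\lambda/4$, it passes to $\tilde v=(x_n+a)^+-v/\lambda$ and uses a Harnack inequality tailored to the quasilinear operator $\mathrm{div}\left(|\xi-\textbf e_n|^{p-2}(\xi-\textbf e_n)\right)$, precisely because $\nabla v$ may be small and the ordinary linearisation would degenerate. It then propagates the improvement from that ball to all of $B_{1/4}\cap\{x_n>-a\}$ by the comparison principle against a barrier $w$: the $p$-harmonic function on $\left(B_1\setminus B_{r_1}(\tilde P)\right)\cap\{x_n>-a\}$ with the right Dirichlet data, vanishing on $\{x_n=-a\}$, and the Hopf lemma at that flat boundary piece. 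Because the barrier vanishes exactly on $\{x_n=-a\}\supset\partial\{v>0\}$, the comparison handles all points near the free boundary automatically, without ever needing ellipticity of the linearisation there. This barrier–Hopf step is the key idea your proposal is missing.
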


\begin{proof}
We prove only the first implication since the second statement can be obtained by the same arguments. First, we notice that, since $|b| < \frac{1}{100}$, both $v$ and $\lambda(x_n+a)^+$ are positive and $p$-harmonic in $B_{\frac{1}{4}}(P)$. Thus,
$$ \lambda (x_n+a)^+ - v(x) \geq 0, \qquad x\in B_{\frac{1}{4}}(P), $$
and
$$ \lambda \left(\frac{1}{2}+a \right)^+-v(P) \geq \lambda \epsilon \left( \frac{1}{2}+ a \right)^+ \geq \frac{49}{100} \lambda \epsilon. $$

Now, we distinguish two cases:

\textbf{Case (i)}. Suppose $|\nabla v(P)| < \frac{\lambda}{4}$. 
Therefore, there exists $r_1=r_1(n,p)>0$ such that $|\nabla v(x)| \leq \frac{\lambda}{2}$ in $B_{4r_1}(P)$ (note that $\frac{v}{\lambda}$ is universally bounded and $p$-harmonic in $B_{\frac{1}{4}}(P)$).

It is easy to find that for $\tilde{v}:= (x_n+a)^+-\frac{1}{\lambda} v$, we have
$$ \mathrm{div} \left(|\nabla \tilde{v} - \textbf{e}_n|^{p-2} (\nabla \tilde{v} - \textbf{e}_n) \right)=0, \qquad \text{in} \quad B_{\frac{1}{20}}(P). $$
We now apply Harnack's inequality for the above operator (see e.g. \cite[Lemma 4.1]{MR4273843}) in $B_{4r_1}(P)$, to deduce that
$$ (x_n+a)^+-\frac{1}{\lambda} v(x) \geq C^{-1} \left( \left(\frac{1}{2}+a \right)^+-\frac{1}{\lambda} v(P) \right)-r_1, \qquad \text{in} \quad B_{r_1}(P), $$  
for an appropriate universal constant $C=C(n,p)>0$. On the other hand, for all $x \in B_{r_1}(P)$, we obtain
$$ 
\begin{aligned}
C^{-1} \frac{49}{100} \epsilon - r_1 & \leq (x_n+a)^+-\frac{1}{\lambda} v(x) \\
& \leq (x_n+2r_1+a)^+-2r_1-\frac{1}{\lambda} v(x+2r_1\textbf{e}_n) + \frac{2r_1}\lambda \norm{\nabla v}_{L^\infty(B_{4r_1}(P))} \\
& \leq (x_n+2r_1+a)^+-2r_1-\frac{1}{\lambda} v(x+2r_1\textbf{e}_n) + r_1 \\
& \leq (x_n+2r_1+a)^+-\frac{1}{\lambda} v(x+2r_1\textbf{e}_n) - r_1.
\end{aligned}
$$
Thus, with $\tilde{P}=P+2r_1\textbf{e}_n$, we get
\begin{equation}
\label{MUL1}
C^{-1} \frac{49}{100} \epsilon \leq  \left(x_n+a \right)^+-\frac{1}{\lambda} v\left(x \right), \qquad \text{for all \,\,\ $x \in B_{r_1}(\tilde{P}$).}
\end{equation}
Hence, by considering the inequality \eqref{MUL1} and also using the bound $|a| \leq \frac{1}{100}$, there is a constant $c=c(n,p)$ such that
$$ v(x) \leq \lambda (1-c \epsilon) (x_n+a)^+, \qquad \text{for all \,\,\, $x \in B_{r_1}(\tilde{P})$.} $$ 
We now let $w$ be the solution to the following problem
$$
\begin{cases}
\Delta_p w = 0, \qquad & \mathrm{in} \quad \left( B_1(0) \setminus B_{r_1}(\tilde{P}) \right) \cap \{x_n>-a\} \\
w=0, \qquad & \mathrm{on} \quad B_1 \cap \{x_n=-a\} \\
w=(x_n+a)^+,  \qquad & \mathrm{on} \quad \partial B_1(0) \cap \{x_n>-a\} \\
w=(1-c\epsilon)(x_n+a)^+, \qquad & \mathrm{on} \quad \partial B_{r_1}(\tilde{P}) \cap \{x_n>-a\}.
\end{cases}
$$
By the Hopf boundary lemma (\cite[Proposition 3.2.1]{MR700735}),
$$ w(x) \leq (1-\tau \epsilon)(x_n+a)^+, \qquad \text{for every $x \in B_{\frac{1}{4}} \cap \{x_n>-a\}$,} $$
for a suitable constant $\tau=\tau(n,p)$. 
On the other hand, by the comparison principle, we have $v \leq \lambda w$ in $\{v>0\}\cap B_1 \setminus B_{r_1}(\tilde{P})$, which concludes the proof in \textbf{Case (i)}. 

\medskip

\textbf{Case (ii)}. Suppose $|\nabla v(P)| \geq \frac{\lambda}{4}$. By the interior gradient estimate, we know that $\nabla v$ is bounded in $B_{\frac{1}{40}}(P)$, and  there exist a constant $0<r_0=r_0(n,p)$, with $8r_0 \leq \frac{1}{40}$ such that
$$ \frac{\lambda}{8} \leq |\nabla v(x)| \leq C\lambda, \qquad \text{for all \,\,\, $x \in B_{8r_0}(P)$,} $$
for an appropriate universal constant $C=C(n,p)>0$. Now, $v$ will be the weak solution to the following uniformly elliptic equation
$$ \sum_{i,j=1}^n  \theta_{ij}  \partial_{x_ix_j} v = 0, \qquad \text{in} \quad B_{4r_0}(P), $$
with $\theta_{ij}=\delta_{ij} + (p-2)|\nabla v|^{-2} \partial_{x_i}v \partial_{x_j}v $.
In this way, $\tilde{v}$ also will satisfy a uniformly elliptic equation in $B_{4r_0}(P)$, and then by applying Harnack's inequality (see e.g. \cite[Chapter 9]{MR1814364}), we get
\begin{equation}
\label{MUL2}
C^{-1} \frac{49}{100} \epsilon \leq \left(x_n+a \right)^+-\frac{1}{\lambda} v\left(x \right), \qquad \text{for all \,\,\, $x \in B_{r_0}(P)$}.
\end{equation}
Now, we can repeat the same argument of \textbf{Case (i)}, by considering the inequality \eqref{MUL2} in the ball $B_{r_0}(P)$ instead of inequality \eqref{MUL1}. This completes the proof of the lemma.
\end{proof}

We next prove the two partial Harnack inequalities. 
The proof of these inequalities is based on a comparison with suitable test functions. In order to build these "barriers", we will use the following function $\varphi$. Let $Q=(0, \cdots, 0, \frac{1}{5})$ and define $\varphi: B_1 \to \mathbb{R}$ by
\begin{equation}
\label{E14}
\varphi(x)=
\begin{cases}
1, \qquad & \mathrm{if} \,\,\, x \in B_{\frac{1}{100}}(Q), \\
\kappa \left( |x-Q|^{-\gamma}-(\frac{3}{4})^{-\gamma} \right), \qquad & \mathrm{if} \,\,\, x \in B_{\frac{3}{4}}(Q) \setminus \overline{B_{\frac{1}{100}}(Q)}, \\
0, \qquad & \mathrm{otherwise},
\end{cases}
\end{equation}
where 
$$ \gamma=\gamma(n,p):=\max \left\{1,\frac{1+n-p}{p-1}, \frac{1+n}{p-1}-2, n+p-3 \right\}, $$
and the constant $\kappa$ is chosen in such a way that $\varphi$ is continuous. One can check that $\varphi$ has the following properties:
\begin{enumerate}
\item[($\varphi.1$)] $0 \leq \varphi \leq 1 $ in $\mathbb{R}^n$, and $\varphi=0$ on $\partial B_1$;
\item[($\varphi.2$)] For $s>0$ small,
$$ -\mathrm{div} \left( \left| \textbf{e}_n- s \nabla \varphi \right|^{p-2} \left(  \textbf{e}_n- s \nabla \varphi \right) \right) \geq C(n,p,s)>0, \qquad \text{in} \quad \{\varphi>0\} \setminus \overline{B_{\frac{1}{100}}(Q)}, $$
(with fairly simple computations same as the ones which have been done in \cite[Lemma 4.2]{MR4273843}); 
\item[($\varphi.3$)] $\partial_n \varphi >0$ in $\{\varphi>0\} \cap \{|x_n| \leq \frac{1}{100}\}$;
\item[($\varphi.4$)] $\varphi \geq c(n,p)>0$ in $B_{\frac{1}{6}}$;
\end{enumerate}
where $C(n,p,s)$ and $c(n,p)$ are constants.

\begin{lemma}[Partial Boundary Harnack I]
\label{L8}
Given $1<p<\infty$ and $\lambda_+\ge \lambda_- >0$, there exist constants $\overline{\epsilon}=\overline{\epsilon}(n,\lambda_{\pm},p)>0$ and $\overline{c}=\overline{c}(n, \lambda_{\pm},p) \in (0,1)$ such that, for every function $u : B_4 \to \mathbb{R}$ satisfying $(b)-(c)$ in Theorem \ref{T2}, the following properties hold true.

Let $a_{\pm}, b_{\pm} \in (-\frac{1}{100},\frac{1}{100})$ be such that
$$ b_+ \leq  b_- \leq a_- \leq a_+, $$
and
$$ (a_--b_-)+(a_+-b_+) \leq \overline{\epsilon}. $$
Assume that for $x \in B_4$
$$ \lambda_+ (x_n+b_+)^+ \leq u^+(x) \leq \lambda_+ (x_n+a_+)^+, $$
and
$$ -\lambda_- (x_n+b_-)^- \leq -u^-(x) \leq -\lambda_-(x_n+a_-)^-. $$
Then, one can find new constants $\overline{a}_{\pm}, \overline{b}_{\pm} \in (-\frac{1}{100}, \frac{1}{100})$, with
$$ \overline{b}_+ \le  \overline{b}_- \leq \overline{a}_- \leq \overline{a}_+, $$
and
$$ \overline{a}_--\overline{b}_- \leq \overline{c} (a_--b_-), \qquad \overline{a}_+-\overline{b}_+ \leq \overline{c} (a_+-b_+) $$
such that for $x \in B_{\frac{1}{6}}$
$$ \lambda_+ (x_n+\overline{b}_+)^+ \leq u^+(x) \leq \lambda_+ (x_n+\overline{a}_+)^+, $$
and
$$ -\lambda_- (x_n+\overline{b}_-)^- \leq -u^-(x) \leq - \lambda_- (x_n+\overline{a}_-)^-. $$
\end{lemma}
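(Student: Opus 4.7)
The plan is to reduce Lemma \ref{L8} to two independent applications of Lemma \ref{L7}, one for the positive phase $u^+$ and one for the negative phase $-u^-$, and then to verify that the nested ordering of the trap constants is preserved after the improvement. First, after rescaling $B_4$ to $B_1$ via $\tilde u(y)=u(4y)/4$, the smallness $(a_+-b_+)+(a_--b_-)\le\bar\epsilon$ ensures that the rescaled endpoints lie in $(-\tfrac{1}{100},\tfrac{1}{100})$, so the hypotheses of Lemma \ref{L7} are met for $u^+$ (which is $p$-harmonic in $\{u^+>0\}$ and trapped between $\lambda_+(x_n+b_+)^+$ and $\lambda_+(x_n+a_+)^+$), and symmetrically for $-u^-$. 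Crucially, Lemma \ref{L7} uses only the one-sided planar barriers and $p$-harmonicity in the positivity set, so the two-phase free boundary condition need not be invoked at this stage; the interaction between the two phases will enter only through the bookkeeping of the nested ordering.

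Next I perform a dichotomy at the probe point $P=(0,\dots,0,\tfrac{1}{2})$ for $u^+$. Either $u^+(P)\le \lambda_+(\tfrac{1}{2}+\tfrac{a_++b_+}{2})$, in which case Lemma \ref{L7} (first implication, applied with $\epsilon$ of order $a_+-b_+$) yields an improved upper trap $\bar a_+$ with $\bar a_+-b_+\le(1-\tau/2)(a_+-b_+)$; or the reverse inequality holds and the second implication of Lemma \ref{L7} yields an improved lower trap $\bar b_+$ with $a_+-\bar b_+\le(1-\tau/2)(a_+-b_+)$. An entirely symmetric dichotomy applied to $-u^-$ at $P'=(0,\dots,0,-\tfrac{1}{2})$ improves one of $a_-$ or $b_-$ by the same factor, shrinking the second gap. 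Passing from the $B_{1/4}$ in which Lemma \ref{L7} delivers its improvement back to $B_{1/6}$ in the original scale is a routine covering argument.

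It remains to guarantee that the nested ordering $\bar b_+\le\bar b_-\le\bar a_-\le\bar a_+$ is maintained regardless of which of the four combinations of sub-cases occurs. In configurations where a raw improvement would break nesting (for instance, when the upper trap is improved in both phases and $\bar a_+$ would drop below $\bar a_-$), I cap the offending improved endpoint at the corresponding partner value; the smallness of $\bar\epsilon$ ensures that even the capped endpoint still yields a contraction factor $\bar c\in(0,1)$ depending on $n$, $\lambda_\pm$, $p$. A careful choice of the dichotomy thresholds, weighted by $\lambda_\pm$ rather than placed exactly at midpoints, is what makes all four case combinations close cleanly and is the source of the dependence of $\bar c$ and $\bar\epsilon$ on $\lambda_\pm$.

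I expect the main obstacle to be precisely this joint case analysis: although Lemma \ref{L7} improves $u^+$ and $u^-$ separately in a transparent way, the nested constraint $b_+\le b_-\le a_-\le a_+$ couples the two improvements, and one has to exhibit a uniform contraction factor under all four possible outcomes of the two dichotomies. Once that bookkeeping is in place, the lemma follows by a direct combination of the two one-phase partial Harnack improvements.
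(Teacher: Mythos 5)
Your proposal has a genuine gap: it is not true that Lemma~\ref{L8} reduces to two independent applications of Lemma~\ref{L7} with only ``bookkeeping'' to reconcile the nesting. The obstruction is a mismatch of \emph{shape} between what Lemma~\ref{L7} gives and what Lemma~\ref{L8} needs. Lemma~\ref{L7} produces a \emph{multiplicative} improvement of the trap, $u^+\le \lambda_+(1-\tau\epsilon)(x_n+a_+)^+$, whereas Lemma~\ref{L8} requires a \emph{translated} trap $u^+\le \lambda_+(x_n+\bar a_+)^+$ with $\bar a_+<a_+$. These are not interchangeable near the free boundary: requiring $(1-\tau\epsilon)(x_n+a_+)\le (x_n+\bar a_+)$ at $x_n\approx-\bar a_+$ would force $\tau\epsilon\ge 1$. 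Converting the coefficient contraction into a planar shift is exactly what the sliding barrier $f_t$ built from the auxiliary function $\varphi$ in \eqref{E14} accomplishes, and that step is absent from your outline.

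Moreover, your claim that ``the two-phase free boundary condition need not be invoked at this stage'' is incorrect, and this is where the argument actually lives. In the paper's proof, the barrier $f_t$ is slid until a first contact point $\overline{x}$ is found; $p$-harmonicity forces $\overline{x}$ into $\{u=0\}$, and then one must use the viscosity free boundary inequalities of Lemma~\ref{L2} — the one-phase inequality if $\overline{x}\in\Gamma^{\pm}_{\mathrm{OP}}$, and the two-phase jump condition $|\nabla f^+_{\overline t}|^p-|\nabla f^-_{\overline t}|^p$ versus $\lambda_+^p-\lambda_-^p$ if $\overline{x}\in\Gamma_{\mathrm{TP}}$. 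In addition, the improvement from below of $u^+$ (raising $b_+$) cannot be decoupled from $u^-$ at all: sliding the positive barrier upward moves its zero level through the region where $u$ is negative, so the barrier must carry both a positive and a negative part, and the paper is forced into a genuine two-phase case analysis (Case~2.1 when $b_--b_+\ge\eta\epsilon$ versus Case~2.2 when $b_--b_+\le\eta\epsilon$) that has no analogue in a purely one-phase treatment. Your four-way ``cap the offending endpoint'' bookkeeping does not substitute for this, because without the barrier you never even obtain a translated trap to cap. So the missing ingredients are precisely the $\varphi$-barrier construction and the viscosity free boundary conditions — the core of the paper's proof.
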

\begin{remark}
We need to remark that the assumption $\lambda_+\ge \lambda_-$ is not restrictive as one can always replace $u(x)$ by $-u(x', -x_n)$ in $J_{\mathrm{TP}}$. Also, when $\lambda_+\le \lambda_-$ the similar result holds if we replace the order of $a_\pm, b_\pm$ with
$ b_- \le b_+\le a_+ \le a_- .$ 
\end{remark}
\begin{proof}[Proof of Lemma \ref{L8}]
Let us show how to improve the positive part. 
More precisely,  given $a_+,a_-,b_+,b_-$  we will show how we can find $\overline{a}_+$ and $\overline{b}_+$. 
The proof for $\overline{b}_-$ and $\overline{a}_-$ follows in the same way.
We let
$$P=(0, \cdots,0,2), $$
and distinguish two cases:

\medskip
\noindent \textit{\bf Case 1. Improvement from above.} 
 Assume that, at the point $P$, $u^+$ is closer to $\lambda_+(2+b_+)^+$ than to the upper barrier $\lambda_+(2+a_+)^+$. Precisely that
$$ u^+(P) \leq \lambda_+ (2+a_+)^+-\frac{\lambda_+ (a_+-b_+)}{2}. $$
In this case, we will show that $u(x)$ is less than $\lambda_+ (x_n+\overline{a}_+)^+$ in a smaller ball centered at the origin for $\overline{a}_+$ strictly smaller than $a_+$.

We start by setting
$$ \epsilon:=a_+-b_+ \leq \overline{\epsilon}. $$
Then
$$u^+(P)\leq \lambda_+ (2+a_+)^+-\frac{\lambda_+ \epsilon}{2} \leq \lambda_+ (1-c \epsilon)(2+a_+)^+, $$
for a suitable (universal) constant $c$. We can thus apply (the scaled version of) Lemma \ref{L7} to $u^+$, to infer the existence of a constant $\tau=\tau(n,p)$ such that
\begin{equation}
\label{EE5}
u^+(x) \leq \lambda_+ (1-\tau \epsilon)(x_n+a_+)^+, \qquad \text{in} \quad B_1.
\end{equation}
For $\varphi$ as in \eqref{E14} and $ t \in [0,1]$, we set 
$$ f_t= \lambda_+ \left(1-\tau \frac{\epsilon}{2} \right)(x_n+a_+-tc \epsilon \varphi )^+, $$
where $c=c(n,p)$ is a small constant chosen such that for all $x \in B_{\frac{1}{100}}(Q)$ and $t \in [0,1)$,
\begin{equation}
\label{EE6}
\begin{aligned}
u(x) &\leq \lambda_+ (1-\tau \epsilon)(x_n+a_+)^+ \\
& \leq \lambda_+ \left(1-\tau \frac{\epsilon}{2} \right)(x_n+a_+-c\epsilon)^+ < f_t(x), 
\end{aligned}
\end{equation}
where we have used that $(x_n+a_+)$ is within two universal constant for $x \in B_{\frac{1}{100}}(Q)$. 

We now let $\overline{t} \in (0,1]$ the largest $t$ such that $f_t \geq u $ in $B_1$ and we claim that $\overline{t}=1$. Indeed assume that $\overline{t}<1$, then there exists $\overline{x} \in B_1$ such that
\begin{equation}
\label{EE7}
u(x)-f_{\overline{t}}(x) \leq u(\overline{x})-f_{\overline{t}}(\overline{x})=0, \qquad \text{for all \,\,\, $x \in B_1$}.
\end{equation}
Note that by \eqref{EE6}, $\overline{x} \not \in B_{\frac{1}{100}}(Q)$, while, by  \eqref{EE5}, $\overline{x} \in \{\varphi>0\}$. 
Moreover, if $u(\overline{x}) = f_{\overline{t}}(\overline{x})>0$, by $(\varphi.2)$ we will have
$$\Delta_p f_{\overline{t}}(\overline{x})=\left( \lambda_+ \left(1-\tau \frac{\epsilon}{2} \right) \right)^{p-1} \mathrm{div} \left( \left|\textbf{e}_n-\overline{t}c\epsilon \nabla \varphi (\overline{x}) \right|^{p-2} \left(\textbf{e}_n-\overline{t}c\epsilon \nabla \varphi (\overline{x}) \right) \right) <0, $$
but, since $\Delta_p u(\overline{x})=0$, we reach a contradiction with \eqref{EE7} and the definition of viscosity solution for the $p$-harmonic function $u$.  
Hence, $u(\overline{x}) = f_{\overline{t}}(\overline{x})=0$. 
Now recall the free boundary condition  \eqref{OVERDETERMINED}  and apply $(\varphi.3)$ to get 
$$ \lambda_+^p \leq |\nabla f_{\overline{t}}(\overline{x})|^p = \lambda_+^p \left(1-\tau \frac{\epsilon}{2} \right)^p \left(1-p c  \overline{t} \epsilon \partial_n \varphi(\overline{x})+ O(\epsilon^2)\right) < \lambda_+^p, $$
provided $\epsilon \leq \overline{\epsilon}(n,\lambda_+,p) \ll 1$ (note that necessarily $u(\overline{x})=0$ and thus $\overline{x} \in \{|x_n|\leq \frac{1}{100}\}$). 
This contradiction implies that $\overline{t}=1$. 
Hence, by $(\varphi.4)$, we get for all $x \in B_{\frac{1}{6}}$
$$ u(x) \leq \lambda_+ \left(1-\tau \frac{\epsilon}{2} \right) (x_n+a_+-c \epsilon \varphi)^+ \leq \lambda_+ (x_n+a_+-\overline{c} \epsilon)^+, $$
for a suitable constant $\overline{c}=\overline{c}(n,p)$. Setting
$$ \overline{a}_+=a_+-\overline{c} \epsilon, \qquad \overline{b}_+=b_+, $$
and recalling that $\epsilon=a_+-b_+$ we finish the proof in this case.

\medskip
\noindent \textit{\bf Case 2. Improvement from below.} We now assume that, at  point $P$, $u^+$ is closer to $\lambda_+(2+a_+)^+$ than to $\lambda_+(2+b_+)^+$. Hence, we have
$$ u^+(P) \geq \lambda_+(2+b_+)^++\frac{\lambda_+(a_+-b_+)}{2}, $$ 
and we set again
$$ \epsilon:=a_+-b_+ \leq \overline{\epsilon}. $$
Arguing as in Case 1, by Lemma \ref{L7}, there exists a constant $\tau=\tau(n,p)$ such that 
\begin{equation}
\label{EE8}
u^+(x) \geq \lambda_+ (1+\tau \epsilon)(x_n+b_+)^+, \qquad \text{in} \quad B_1.
\end{equation}
We need now to distinguish two further sub-cases:\\
\textit{\bf Case 2.1:} Suppose that 
$$ \eta \epsilon \leq b_--b_+, $$
where $\eta \ll \tau$ is a small universal constant which we will choose at the end of the proof. In this case, for $x \in B_1$,
\begin{equation}
\label{EE9}
\begin{aligned}
u(x) &\geq \lambda_+ (1+\tau \epsilon)(x_n+b_+)^+-\lambda_-(x_n+b_-)^- \\
& \geq \lambda_+ (1+\tau \epsilon)(x_n+b_+)^+-\lambda_-(1-c_1\eta \epsilon)(x_n+b_+)^-,
\end{aligned}
\end{equation}
for a suitable universal constant $c_1$. We now take $\varphi$ as in \eqref{E14} and set, for $t \in [0,1]$,
$$ f_t(x)=\lambda_+ \left(1+\tau \frac{\epsilon}{2} \right)(x_n+b_++c_2 t \epsilon \varphi)^+-\lambda_-(1-c_1\eta \epsilon)(x_n+b_++c_2t \epsilon \varphi)^-, $$
for a suitably small universal constant $0 < c_2 \ll \tau$, chosen so that for all $x \in B_{\frac{1}{100}}(Q)$
$$ (1+\tau \epsilon)(x_n+b_+)^+ \geq \left(1+\tau \frac{\epsilon}{2} \right) (x_n+b_++c_2 \epsilon)^+. $$
This together with \eqref{EE8} implies that 
\begin{equation}
\label{EE10}
\begin{aligned}
u(x) \geq \lambda_+ (1+\tau \epsilon)(x_n+b_+)^+ & \geq \lambda_+ \left(1+\tau \frac{\epsilon}{2} \right)(x_n+b_++c_2 \epsilon)^+ \\
& \geq f_1(x) \geq f_t(x), \quad \text{for all \,\, $x \in B_{\frac{1}{100}}(Q), t \in [0,1]$}. 
\end{aligned}
\end{equation}
Furthermore $u \geq f_0$ in $B_1$ thanks to \eqref{EE9}.
Similar to Case 1, let $\overline{t}$ be the biggest $t$ such that $f_t \leq u$ in $B_1$ and $\overline{x}$ be the first contact point, so that
$$ u(x)-f_{\overline{t}}(x) \geq u(\overline{x})-f_{\overline{t}}(\overline{x})=0, \qquad \text{for all \,\,\,$x \in B_1$.} $$
Since, by using ($\varphi.2$), it can be checked that 
$$ \Delta_p f_{\overline{t}} >0, \qquad \text{on}\quad \{f_t \neq 0 \} \setminus B_{\frac{1}{100}}(Q), $$  
therefore, as in Case 1, $\overline{x}$ is a free boundary point. Moreover, since $f_{\overline{t}}$ changes sign in a neighborhood of $\overline{x}$:
$$
\begin{array}{rr}
\mathrm{either} & \quad \overline{x} \in \Gamma^+_{\mathrm{OP}}=\partial \Omega^+_u \setminus \partial \Omega^-_u, \\
\\
\mathrm{or} & \quad \overline{x} \in \Gamma_{\mathrm{TP}}=\partial \Omega^+_u \cap \partial \Omega^-_u.
\end{array}
$$
In the first case, by definition of viscosity solution and $(\varphi.3)$,
$$ \lambda_+^p \geq |\nabla f^+_{\overline{t}}(\overline{x})|^p = \lambda_+^p \left(1+\tau \frac{\epsilon}{2} \right)^p \left(1+p c_2  \overline{t} \epsilon \partial_n \varphi(\overline{x})+ O(\epsilon^2)\right) > \lambda_+^p, $$
a contradiction for $\epsilon \ll 1 $. In the second case, we have a contradiction as well, since (recall also the assumption $\lambda_+-\lambda_-\ge0$)
$$
\begin{aligned}
\lambda_+^p-\lambda_-^p & \geq |\nabla f^+_{\overline{t}}|^p-|\nabla f^-_{\overline{t}}|^p \\
&= \left(\lambda_+^p \left( 1+ \tau \frac{\epsilon}{2} \right)^p - \lambda_-^p (1-c_1\eta \epsilon)^p \right) \left(1+p c_2  \overline{t} \epsilon \partial_n \varphi(\overline{x})+ O(\epsilon^2)\right) \\
& > \lambda_+^p - \lambda_-^p, 
\end{aligned}
$$
provided $\epsilon \ll 1 $ (only depending on $n$, $\lambda_+$ and $p$). Hence, $\overline{t}=1$, $u \geq f_1$ (so $u^+ \ge f_1^+$) which implies the desired conclusion by setting
$$ \overline{a}_+=a_+, \qquad \overline{b}_+=b_++\overline{c}_2 \epsilon, $$
for a suitable constant $\overline{c}_2=\overline{c}_2(n,p)$ and by recalling that $\epsilon=a_+-b_+$.

\noindent \textit{\bf Case 2.2:} Assume instead that:
$$0\le  b_--b_+ \leq \eta \epsilon, $$
where $\eta=\eta(n,p)$ will be determined later. 
In this case we consider the family of functions $$ f_t(x)=\lambda_+ \left(1+\tau \frac{\epsilon}{2} \right)(x_n+b_++\eta t \epsilon \varphi)^+-\lambda_-(x_n+b_-)^-. $$
Since $\varphi \leq 1$, this function is well defined due to $b_- \leq b_+ + \eta\epsilon$. 
Moreover, $u \geq f_0$ and, thanks to \eqref{EE8} and by assuming $\eta$ is sufficiently small (this can  also be determined universally  depending only on the dimension and $p$) we will have,
$$ u(x) \geq f_1(x) \geq f_t(x), \qquad \text{for all \,\,\, $x \in B_{\frac{1}{100}}(Q), t \in [0,1]$.} $$
We consider again the first touching time $\overline{t}$ and the first touching point $\overline{x}$. 
By arguing as in the previous cases, we get $\overline{x} \in \{u=0\}\cap \{|x_n|\leq \frac{1}{100}\}$.
Also, the definition of $f_t$ yields that $\overline{x} \in \partial\{f_{\overline{t}}>0\}$.
This infer that  $\overline{x} \in \partial \Omega^+_u \setminus \partial \Omega^-_u$ (note that $\varphi(\overline{x})<1$). 
However, again by arguing as in Case 2.1, this is in contradiction with $u$ being a viscosity solution. 
We now conclude as in the previous cases.
\end{proof}

The following lemma addresses the situation in which the origin is not a branching point.

\begin{lemma}[Partial Boundary Harnack II]
\label{L9}
Given $1<p<\infty$ and $0< L_0, L_1$ and assume that $0<\lambda_- \le \lambda_+\le L_1$, then there exist constants $\overline{\epsilon}=\overline{\epsilon}(n,L_0,L_1,p)>0$, $M=M(n,L_0,L_1,p)$ and $c=c(n,L_0,L_1,p) \in (0,1)$ such that for every function $u: B_4 \to \mathbb{R}$ satisfying $(b)-(c)$ in Theorem \ref{T2} the following property holds true. If there are constants $a,b \in (-\frac{1}{100}, \frac{1}{100})$ with
$$ 0 \leq a-b \leq \overline{\epsilon}, $$
such that for $x \in B_4$
$$ H_{\alpha, \textbf{e}_n} (x+b\textbf{e}_n) \leq u(x) \leq H_{\alpha, \textbf{e}_n}(x+a\textbf{e}_n), $$
and
$$ \max(\lambda_+, L_0) + M \epsilon \leq \alpha \leq L_1, $$  
then there are constants $\overline{a}, \overline{b} \in (-\frac{1}{100}, \frac{1}{100})$ with 
$$ 0 \leq \overline{a}-\overline{b} \leq c(a-b), $$
such that for $x \in B_{\frac{1}{6}}$
$$ H_{\alpha, \textbf{e}_n} (x+\overline{b}\textbf{e}_n) \leq u(x) \leq H_{\alpha, \textbf{e}_n}(x+\overline{a}\textbf{e}_n). $$
\end{lemma}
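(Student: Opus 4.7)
I follow the sliding-barrier strategy of Lemma~\ref{L8}, noting that here a single shift parameter suffices because the two phases share one common free boundary (no branching, thanks to the gap $M\epsilon$). Set $P=(0,\dots,0,2)$, $\epsilon:=a-b$, and split into two mirror cases according to whether $u(P)\le \alpha(2+a)-\alpha\epsilon/2$ (improvement of $a$) or $u(P)\ge \alpha(2+b)+\alpha\epsilon/2$ (improvement of $b$).

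Treating the first case: since $u^+$ is $p$-harmonic in $\{u>0\}\supset\{x_n>-b\}$ and is squeezed between $\alpha(x_n+b)^+$ and $\alpha(x_n+a)^+$, the rescaled Lemma~\ref{L7} yields $u^+(x)\le \alpha(1-\tau\epsilon)(x_n+a)^+$ on $B_1$ for a universal $\tau=\tau(n,p)$. I then introduce the comparison family
\[
f_t(x)=\alpha\bigl(1-\tfrac{\tau\epsilon}{2}\bigr)\bigl(x_n+a-ct\epsilon\varphi(x)\bigr)^+-\beta\bigl(x_n+a-ct\epsilon\varphi(x)\bigr)^-,\qquad t\in[0,1],
\]
with $\varphi$ as in~\eqref{E14} and $c=c(n,p)>0$ small, crucially leaving $\beta$ unperturbed; this manufactures a strict $\tau\epsilon/2$-deficit in the two-phase free boundary balance. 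Standard checks (property~$(\varphi.2)$ in each phase, and a Taylor expansion) give $f_0\ge u$ on $B_1$, $f_t>u$ strictly on $\overline{B_{1/100}(Q)}$ for every $t$, and $\Delta_p f_t<0$ strictly in $\{f_t\ne 0\}\setminus\overline{B_{1/100}(Q)}$.

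Let $\bar t\in[0,1]$ be the largest $t$ with $f_t\ge u$ on $B_1$ and let $\bar x$ be a contact point. I show $\bar t=1$ by eliminating every possible location of $\bar x$. If $\bar x$ lies in the interior of either phase of $u$, strict $p$-superharmonicity of $f_{\bar t}$ clashes with $\Delta_p u=0$ via the strong minimum principle. If $\bar x\in\Gamma_{\mathrm{TP}}$, expanding $\bigl(\alpha^p(1-\tau\epsilon/2)^p-\beta^p\bigr)|\textbf{e}_n-c\bar t\epsilon\nabla\varphi|^p$ produces $\lambda_+^p-\lambda_-^p$ minus two nonnegative corrections, one of which is strictly positive by~$(\varphi.3)$ and $\lambda_+\ge\lambda_-$, violating Lemma~\ref{L2}(B.3). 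If $\bar x\in\Gamma^+_{\mathrm{OP}}$, Lemma~\ref{L2}(B.1) would force $f_{\bar t}^-\equiv 0$ near $\bar x$, impossible since $\nabla(x_n+a-c\bar t\epsilon\varphi)\ne 0$ at $\bar x$ for small $\epsilon$. If $\bar x\in\Gamma^-_{\mathrm{OP}}$, the hypothesis $\alpha\ge\max(\lambda_+,L_0)+M\epsilon$ together with $\alpha^p-\beta^p=\lambda_+^p-\lambda_-^p$ yields $\beta-\lambda_-\ge C'(n,p,L_0,L_1)\,M\epsilon$, so choosing $M$ large (after $\tau$ and $c$ are fixed) gives $|\nabla f_{\bar t}^-|=\beta(1-c\bar t\epsilon\,\partial_n\varphi+O(\epsilon^2))>\lambda_-$, contradicting Lemma~\ref{L2}(B.2). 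Hence $\bar t=1$, and $(\varphi.4)$ on $B_{1/6}$ delivers the desired improvement with $\bar a=a-cc_0\epsilon$, $\bar b=b$. The second case is the mirror argument: slide from below the family obtained by replacing $a-ct\epsilon\varphi$ with $b+ct\epsilon\varphi$ and $(1-\tau\epsilon/2)$ with $(1+\tau\epsilon/2)$, and apply parts (A.1)--(A.3) of Lemma~\ref{L2} at the contact point.

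\textbf{Main obstacle.} The decisive case is $\bar x\in\Gamma^-_{\mathrm{OP}}$ in the improvement-from-above scenario: there the two-phase balance is vacuous and only the one-sided bound $|\nabla Q^-|\le\lambda_-$ is available. Closing it is precisely why the lemma asserts the quantitative non-branching hypothesis with parameter $M$. The gap $\alpha-\max(\lambda_+,L_0)\gtrsim M\epsilon$ must first be transferred to a gap $\beta-\lambda_-\gtrsim M\epsilon$ (uniformly in $\lambda_-$, using the elementary inequality $\beta^p-\lambda_-^p\ge p\lambda_+^{p-1}M\epsilon$ and $\beta\le C(L_1,p)$), and then $M$ must be chosen large enough to dominate the $O(\epsilon)$ correction imported by $\varphi$. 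The quantifier order is essential: $\tau$ and $c$ come first, $M$ last.
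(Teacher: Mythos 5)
Your proposal follows the paper's proof essentially verbatim: same point $P=(0,\dots,0,2)$, same dichotomy, same application of the rescaled Lemma~\ref{L7} to $u^+$, same barrier family $f_t(x)=\alpha(1-\tau\epsilon/2)(x_n+a-ct\epsilon\varphi)^+-\beta(x_n+a-ct\epsilon\varphi)^-$ that shifts the free boundary of both phases while creating a one-sided $\tau\epsilon/2$ deficit only in the positive slope, same sliding argument, and the same three-way case analysis at the touching point (interior, $\Gamma_{\mathrm{OP}}^\pm$, $\Gamma_{\mathrm{TP}}$), with the $\Gamma^-_{\mathrm{OP}}$ case killed by converting the hypothesis $\alpha\ge\max(\lambda_+,L_0)+M\epsilon$ into a $\beta$-gap and choosing $M$ large last. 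Two cosmetic points: in the $\Gamma_{\mathrm{TP}}$ case the strictly positive correction is the $\alpha^p p\tau\epsilon/2$ term (not the $(\varphi.3)$-term, which may vanish when $\lambda_+=\lambda_-$); and the elementary gap transfer should be quoted as $\beta^p-\lambda_-^p=\alpha^p-\lambda_+^p\ge pL_0^{p-1}M\epsilon$ (rather than with $\lambda_+^{p-1}$) so that $C'$ depends only on $L_0,L_1,p$, which is what the paper's uniformity claim requires.
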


\begin{proof}
We consider the point $P=(0,\cdots,0,2)$ and distinguish two cases (note that one of these inequalities is always satisfied):
$$
\begin{array}{rr}
\mathrm{either} & \quad H_{\alpha, \textbf{e}_n} \left( P+b\textbf{e}_n \right)+\dfrac{\alpha(a-b)}{2} \leq u(P), \\
\\
\mathrm{or} & \quad H_{\alpha, \textbf{e}_n} \left( P+a\textbf{e}_n \right)-\dfrac{\alpha(a-b)}{2} \geq u(P).
\end{array}
$$
Since the argument in both cases is completely symmetric we only consider the second case. 
If we set
$$ \epsilon=a-b, $$
by Lemma \ref{L7} and by arguing as in Lemma \ref{L8} we deduce the existence of a constant $\tau=\tau(n,p)$ such that
$$ u(x) \leq \alpha (1-\tau \epsilon)(x_n+a)^+-\beta(x_n+a)^-, $$
in $B_1$. We let $\varphi$ as in \eqref{E14} and  set 
$$ f_t(x)=\alpha \left( 1- \tau \frac{\epsilon}{2} \right)(x_n+a-ct \epsilon \varphi)^+-\beta(x_n+a-ct \epsilon \varphi)^-, $$
where $c=c(n,p)$ is a constant chosen such that 
$$ u(x) \leq f_1(x) \leq f_t(x), \qquad \text{for all \,\,\, $x \in B_{\frac{1}{100}}(Q), t \in [0,1]$,} $$ 
where, $Q=(0,\cdots,0,\frac{1}{5})$. 
As in Lemma \ref{L8}, we let $\overline{t}$ and $\overline{x}$ be the first contact time and the first contact point and we aim to show that $\overline{t}=1$. For this purpose, we note that, by the same arguments as in Lemma \ref{L8}, necessarily $\overline{x} \in \{u=0\}$. We claim that
$$ \overline{x} \in \Gamma_{\mathrm{TP}}=\partial \Omega^+_{u} \cap \partial \Omega^-_{u}. $$
Indeed, otherwise $\overline{x} \in \partial \Omega^-_u \setminus \partial \Omega^+_u$ (the case $\overline{x} \in \partial \Omega^+_u \setminus \partial \Omega^-_u$ will be impossible since $f_{\overline{t}}$ is negative in a neighborhood of $\overline{x}$). 
And by definition of viscosity solution, this along with \eqref{E0-Con} would imply
\begin{equation*}
\begin{split}
\lambda_-^p \geq |\nabla f^-_{\overline{t}}(\overline{x})|^p & = \beta^p \left(1 - p c  \overline{t} \epsilon \partial_n \varphi(\overline{x})+O(\epsilon^2) \right)  \\
 & \geq \left(\lambda_-^p- \lambda_+^p+\alpha^p \right) \left(1 - p c  \overline{t} \epsilon \partial_n \varphi(\overline{x})+O(\epsilon^2) \right)\\
 & \ge \left (\lambda_-^p- \lambda_+^p+ \left(\max(\lambda_+,L_0)+M\epsilon \right)^p \right) \left(1 - p c  \overline{t} \epsilon \partial_n \varphi(\overline{x})+O(\epsilon^2) \right)\\
 & = \lambda_-^p +p \left(L_0^{p-1} M-c  \overline{t}  \partial_n \varphi(\overline{x}) \right)\epsilon + O(\epsilon^2),
\end{split}
\end{equation*}
where the implicit constants in $O(\epsilon^2)$ can control by  $L_1$, $p$ and $n$.
This inequality is impossible if $M$ is chosen sufficiently large. 

Hence $\overline{x} \in \partial \Omega^+_u \cap \partial \Omega^-_u$. This however implies:
$$
\begin{aligned}
\lambda_+^p-\lambda_-^p &\leq |\nabla f^+_{\overline{t}}(\overline{x})|^p - |\nabla f^-_{\overline{t}}(\overline{x})|^p \\
& = \left( \alpha^p \left( 1- \tau \frac{\epsilon}{2} \right)^p - \beta^p \right) \left(1 - pc \overline{t} \epsilon \partial_n \varphi(\overline{x})+O(\epsilon^2) \right) \\
& <\alpha^p-\beta^p = \lambda_+^p - \lambda_-^p,
\end{aligned}
$$
provided $\overline{\epsilon}$ and as a consequence of $\epsilon=a-b \leq \overline{\epsilon}$, $\epsilon$ is chosen small enough, where we have used $(\varphi.3)$ and the equality 
$$0 \le \lambda_+^p-\lambda_-^p=\alpha^p-\beta^p. $$
This contradiction shows that $\overline{t}=1$ and as in Lemma \ref{L8}, this completes the proof.
\end{proof}

With Lemmas \ref{L7} and \ref{L8} at hand the proof of Lemma \ref{L5.5} is as follows.

\begin{proof}[Proof of Lemma \ref{L5.5}] We distinguish two cases:

\medskip
\noindent\underline{\textbf{Case $0\leq \ell < +\infty$}}: 
By triangular inequality we have  
$$ \|u_k - H_{\lambda_+,\textbf{e}_n}\|_{L^{\infty}(B_1)} \leq \epsilon_k \left(1+ 2 \ell \max(\lambda_+^{1-p}, \lambda_-^{1-p}) \right), $$
for $k$ sufficiently large. Define the bounded sequence $w_k$ by 
$$ 
w_k(x)=
\begin{cases}
w_{+,k}(x):=\dfrac{u_k(x)-\lambda_+ x_n^+}{\alpha_k \epsilon_k}, \qquad & x \in \Omega_{u_k}^+ \cap B_1, \\
\\
w_{-,k}(x):=\dfrac{u_k(x)+\lambda_-x_n^-}{\beta_k \epsilon_k}, \qquad & x \in \Omega_{u_k}^- \cap B_1.
\end{cases}
$$
Now we can repeatedly apply Lemma \ref{L8} to deduce that $w_k$ satisfies 
\begin{equation}\label{Eq26}
|w_k(x)-w_k(y)| \le C|x-y|^\gamma, \qquad \text{when }x,y\in B_{\frac12}, \text{ and } |x-y| \ge \frac{\epsilon_k}{\overline{\epsilon}},
\end{equation}
for some universal exponent $0<\gamma<1$ and constant $C$; see \cite[Corollary 4.2]{MR3218810}.
This gives that the graphs of 
$$ \tilde{\Gamma}_k^{\pm}:= \{ (x,w_{\pm,k}(x)) \, : \, x \in \overline{\Omega_{u_k}^{\pm} \cap B_{\frac{1}{2}}} \}, $$
converge, up to a subsequence, in the Hausdorff distance to the closed graphs
$$ \tilde{\Gamma}_{\pm}:= \{ (x,w_{\pm}(x)) \, : \, x \in \overline{B^{\pm}_{\frac{1}{2}}} \}, $$
where $w \in C^{0,\alpha}$ for some $\alpha>0$. Since 
$$ h_k(x):=\frac{H_{\alpha_k,\textbf{e}_n}-H_{\lambda_+,\textbf{e}_n}}{\epsilon_k} \to 
\begin{cases}
\lambda_+^{1-p} \ell x_n, \qquad & x_n>0, \\
\\
\lambda_-^{1-p} \ell x_n, \qquad & x_n<0,
\end{cases}
$$
the original sequence $v_k$ satisfies that their graphs,
converges to the graph of a limiting function $v$ as we wanted, this in particular proves (i), (ii), and (iii).

Since $0 \in \partial \Omega_{u_k}^+ \cap \partial \Omega_{u_k}^-$ then $0$ is in the domain of $v_{\pm,k}$ and
$$ v_{\pm,k}(0)=0, $$
which implies that $v_{\pm}(0)=0$. 
To show that $v_+(x) \leq v_-(x)$ for $x=(x',0) \in  B_{\frac{1}{2}}$, we simply exploit (iii) at the points $x_k^{\pm}=(x',t_k^{\pm})$ where
$$ t_k^+=\sup\{t \, : \, (x',t) \in \partial \Omega^+_{u_k}\} \qquad \text{and} \qquad  t_k^-=\inf\{t \, : \, (x',t) \in \partial \Omega^-_{u_k}\}, $$
and by noticing that $ t_k^- \le t^+_k$. 

Finally, to see the last claim, \eqref{E10}, it is enough to note that if $x_k \in \partial \Omega_{u_k}^+ \cap \partial \Omega^-_{u_k}$ is converging to $x$ then $v_{+,k}(x_k)=v_{-,k}(x_k)$ and thus $v_+(x)=v_-(x)$, yielding $x \in \mathcal{C}$.

\medskip
\noindent\underline{\textbf{Case $\ell =\infty$}}:
In this case, the conclusion follows exactly with a similar argument by using repeatedly Lemma \ref{L9} for function $v_k$ to obtain a relation similar to \eqref{Eq26} for functions $v_k$.
\end{proof}

\subsection{The linearized problem: proof of Lemma \ref{L6}.}
\label{3-2}
Lemma \ref{L6} proves through the following technical lemma, whose proof is easily obtained by adapting the one in \cite[Lemma 3.10]{MR4285137} exactly.
 Then we present the statement without proof. 
 
\begin{lemma}
\label{L12}
Let $u_k$, $\epsilon_k$ and $\alpha_k$ be as in the statement of Lemma \ref{L5.5}, $v_k$ be defined by \eqref{E9} and $v_{\pm}$ be as in Lemma \ref{L5.5}. Then:
\begin{enumerate}
\item [(1)] Let $P_+$ be a quadratic polynomial with $\mathcal{L}_p(P_+) > 0$ (or $\mathcal{L}_p(P_+) < 0$) on $B^+_{\frac{1}{2}}$ touching $v_+$ strictly from below (above) at a point $x_0 \in \{x_n=0\}\cap B_{\frac{1}{2}}$. Then, there exists a sequence of points $\partial \Omega^+_{u_k} \ni x_k \to x_0$ and a sequence of comparison functions $Q_k$ such that $Q_k$ touches from below (above) $u_k^+$ at $x_k$, and such that 
\begin{equation}
\label{E18}
\nabla Q^+_k(x_k)=\alpha_k \textbf{e}_n+ \epsilon_k\alpha_k \nabla P_+(x_0)+ o(\epsilon_k).
\end{equation}

\item [(2)] Let $P_-$ be a quadratic polynomial with $\mathcal{L}_p(P_-) > 0$ ($\mathcal{L}_p(P_-) < 0$) on $B^-_{\frac{1}{2}}$ touching $v_-$ strictly from below (above) at a point $x_0 \in \{x_n=0\}\cap B_{\frac{1}{2}}$. Then, there exists a sequence of points $\partial \Omega^-_{u_k} \ni x_k \to x_0$ and a sequence of comparison functions $Q_k$ such that $Q_k$ touches from below (above) $-u_k^-$ at $x_k$, and such that 
\begin{equation}
\label{E19}
\nabla Q^-_k(x_k)=-\beta_k \textbf{e}_n+ \epsilon_k\beta_k \nabla P_-(x_0)+ o(\epsilon_k).
\end{equation}

\item [(3)] Let $s,t \in \mathbb{R}$ and $\tilde{P}$ be a quadratic polynomial on $B_{\frac{1}{2}}$ such that $\partial_n \tilde{P} = 0$. Suppose that $\mathcal{L}_p(\tilde{P}) \geq 0$ ($\mathcal{L}_p(\tilde{P}) \leq 0$) and that the function 
$$ P:=sx_n^+-tx_n^-+\tilde{P}, $$
touches $v$ strictly from below (above) at a point $x_0 \in \mathcal{C}$. Then, there exists a sequence of points $x_k \to x_0$ and a sequence of comparison functions $Q_k$ such that $Q_k$ touches from below (above) the function $u_k$ at $x_k \in \partial \Omega_{u_k}$, and such that 
\begin{equation}
\label{E20}
\begin{aligned}
& \nabla Q^+_k(x_k)=\alpha_k(1 + \epsilon_k s)  \textbf{e}_n+ o(\epsilon_k), \\
& \nabla Q^-_k(x_k)=-\beta_k (1+ \epsilon_k t) \textbf{e}_n+ o(\epsilon_k).
\end{aligned}
\end{equation}
In particular, if $s>0$ and $Q_k$ touches $u_k$ from below then $x_k \not\in \partial \Omega^-_{u_k} \setminus \partial \Omega^+_{u_k}$, while if $t<0$ and $Q_k$ touches $u_k$ from above then $x_k \not\in \partial \Omega^+_{u_k} \setminus \partial \Omega^-_{u_k}$.
\end{enumerate}
\end{lemma}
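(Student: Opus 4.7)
The proof follows the perturbation-and-sliding strategy of \cite[Lemma 3.10]{MR4285137}, with the Laplacian systematically replaced by the $p$-Laplacian and its linearization $\mathcal{L}_p = \Delta + (p-2)\partial_{nn}$. The algebraic backbone is the identity
\[
\Delta_p\bigl(\alpha(x_n + \epsilon \phi(x))\bigr) = \alpha^{p-1}\epsilon\,\mathcal{L}_p(\phi) + O(\epsilon^2)
\]
on $\{x_n + \epsilon\phi > 0\}$, obtained by expanding $|\nabla w|^{p-2}\nabla w$ to first order in $\epsilon$. In particular, when $\mathcal{L}_p(P_+) > 0$, each function $\alpha_k[x_n + \epsilon_k(P_+ - t)]^+$ is strictly $p$-subharmonic on its positive set for $\epsilon_k$ small, and its free boundary is a smooth hypersurface with unit normal close to $\textbf{e}_n$. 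These are the only two features of the linearization that the sliding argument actually uses.

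For part (1), I introduce the admissible comparison family
\[
Q_k^t(x) := \alpha_k\bigl[\, x_n + \epsilon_k (P_+(x) - t)\,\bigr]^+,
\]
and pick $r > 0$ small enough that $P_+ < v_+$ strictly on $\bigl(\overline{B_r^+(x_0)} \cup (\overline{B_r(x_0)}\cap\{x_n=0\})\bigr)\setminus\{x_0\}$. For $t$ large enough, $Q_k^t \equiv 0$ on $\overline{B_r(x_0)}$, so a fortiori $Q_k^t \le u_k^+$ there. Let $t_k^\ast$ be the smallest $t$ for which this inequality still holds throughout $\overline{B_r(x_0)}$, and let $x_k$ be the corresponding first touching point. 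The Hausdorff convergence of Lemma \ref{L5.5} combined with the strict touching of $P_+$ and $v_+$ at $x_0$ forces $t_k^\ast \to 0$ and $x_k \to x_0$, so that $x_k$ lies in the interior of $B_r(x_0)$ for large $k$. The strict $p$-subharmonicity of $Q_k^{t_k^\ast}$, against the $p$-harmonicity of $u_k^+$ in $\Omega_{u_k}^+$, prevents touching at any interior point of $\Omega_{u_k}^+$ by the strong maximum principle; hence $x_k \in \partial \Omega_{u_k}^+$. At this point $\nabla Q_k^{t_k^\ast}(x_k) = \alpha_k(\textbf{e}_n + \epsilon_k \nabla P_+(x_k))$, and passing to the limit $x_k \to x_0$ gives \eqref{E18}. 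Part (2) is the exact symmetric statement, applied in the negative phase with $\beta_k$ in place of $\alpha_k$.

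For part (3), I take the two-phase test family
\[
Q_k^\tau(x) := \alpha_k(1 + \epsilon_k s)\bigl[\, x_n + \epsilon_k(\tilde P(x) - \tau)\,\bigr]^+ - \beta_k(1 + \epsilon_k t)\bigl[\, x_n + \epsilon_k(\tilde P(x) - \tau)\,\bigr]^-,
\]
which, since $\partial_n \tilde P \equiv 0$ means $\tilde P = \tilde P(x')$, has a single smooth free boundary $\{x_n = \epsilon_k(\tau - \tilde P(x'))\}$. Each one-sided piece is strictly $p$-sub/super-harmonic by the same linearization using $\mathcal{L}_p(\tilde P) \ge 0$. Sliding in $\tau$ yields a contact point $x_k \to x_0$ lying on $\partial \Omega_{u_k}$. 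Since $Q_k^\tau$ takes strictly positive values in a half-neighborhood of $x_k$, the contact cannot occur at $\partial \Omega_{u_k}^- \setminus \partial \Omega_{u_k}^+$ whenever $s > 0$; more precisely, at such a hypothetical one-phase touching the viscosity free boundary condition for $u_k$ would force $|\nabla Q_k^{\tau,+}(x_k)|^p \le \lambda_+^p$, contradicting $|\nabla Q_k^{\tau,+}(x_k)| = \alpha_k(1+\epsilon_k s) > \alpha_k \ge \lambda_+$ for $\epsilon_k$ small. The symmetric sign condition rules out the other wrong-phase case, and reading off the gradient of $Q_k^{\tau_k^\ast}$ at $x_k$ using $\partial_n \tilde P \equiv 0$ yields \eqref{E20}.

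The main technical obstacle is the bookkeeping of the $O(\epsilon_k^2)$ linearization errors throughout the one-parameter sliding, to ensure both that strict $p$-sub/super-harmonicity survives uniformly and that the viscosity free boundary conditions are strictly violated at the wrong type of contact. This is exactly the step that becomes nonlinear for $p \ne 2$; the required adjustments amount to systematically substituting the Laplacian identities of \cite{MR4285137} with the $p$-Laplace expansion displayed above, together with the strong comparison and Hopf-type arguments for $\Delta_p$ already exploited in the proof of Lemma \ref{L8}.
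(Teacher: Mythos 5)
Your proposal follows the same perturbation-and-sliding strategy, adapted from \cite[Lemma 3.10]{MR4285137}, that the paper itself invokes when it omits the proof, and the key ingredients (the linearization $\Delta_p(\alpha(x_n+\epsilon\phi))=\alpha^{p-1}\epsilon\,\mathcal{L}_p(\phi)+O(\epsilon^2)$, the one-parameter barrier family, the first-contact argument, and the identification of the touching point with the free boundary via the strong comparison for $\Delta_p$ with non-vanishing gradient) are all in place. Two small imprecisions are worth fixing. First, in part (3) your barrier gives $\nabla Q_k^{\tau,+}(x_k)=\alpha_k(1+\epsilon_k s)\textbf{e}_n+\alpha_k\epsilon_k\nabla'\tilde P(x_0)+o(\epsilon_k)$; the horizontal term is $O(\epsilon_k)$, not $o(\epsilon_k)$, so \eqref{E20} does not hold verbatim for your $Q_k^\tau$. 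Since $\nabla'\tilde P(x_0)\perp\textbf{e}_n$, the norm $|\nabla Q_k^{\tau,\pm}(x_k)|$ does match to within $o(\epsilon_k)$ and that is all the proof of Lemma \ref{L6} uses, but you should either state \eqref{E20} with the extra horizontal term or tilt the barrier direction to absorb it. Second, your ``more precisely'' clause for ruling out contact at $\Gamma^-_{\mathrm{OP}}$ quotes the wrong viscosity condition: touching $u_k$ from below at $\Gamma^-_{\mathrm{OP}}$ forces $Q^+\equiv 0$ near $x_k$ by $\textup{(A.2)}$, which is exactly what your barrier violates; the bound $|\nabla Q^+(x_k)|\le\lambda_+$ is the condition $\textup{(A.1)}$ at $\Gamma^+_{\mathrm{OP}}$, not at $\Gamma^-_{\mathrm{OP}}$. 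Your first sentence of that paragraph contains the correct argument, so simply drop or correct the follow-up.
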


\begin{proof}[Proof of Lemma \normalfont{\ref{L6}}]
{\bf Step 1:} In this step, we prove $\mathcal{L}_p(v_{\pm})=0$ in $B^{\pm}_{\frac{1}{2}}$.\\
Let $P(x)$ be a quadratic polynomial touching $v=v_+$ at $\overline{x} \in B^{+}_{\frac{1}{2}}$ strictly from below. We need to show that at this point
$$ \mathcal{L}_p(P)=\Delta P + (p-2) \partial_{nn} P \leq 0. $$
Since $v_{+,k} \to v_+$, there exist points $x_k \in \Omega^+_{u_k} \cap B_{\frac{1}{2}}$, $x_k \to \overline{x}$ and constants $c_k \to 0$ such that
\begin{equation}
\label{EEE1}
v_{+,k}(x_k)=P(x_k)+c_k,
\end{equation}
 and
 \begin{equation}
\label{EEE2}
v_{+,k} \geq P+c_k, \qquad \text{in a neighborhood of $x_k$.} 
\end{equation}
From the definition of $v_{+,k}$, \eqref{EEE1} and \eqref{EEE2} read
$$ u_k(x_k)=Q_k(x_k), $$
and
$$ u_k(x) \geq Q_k(x), \qquad \text{in a neighborhood of $x_k$,} $$
where
$$ Q_k(x)=\epsilon_k  \alpha_k (P(x)+c_k)+\alpha_k x_n^+. $$
Note that
 \begin{equation}
\label{EEE3}
\nabla Q_k = \epsilon_k  \alpha_k  \nabla P + \alpha_k \textbf{e}_n,
\end{equation}
thus,
 \begin{equation}
\label{EEE4}
\nabla Q_k(x_k) \neq 0, \qquad \text{for $k$ large.}
\end{equation}

Since $u_k$ is $p$-harmonic and $Q_k$ touches $u_k$ from below at $x_k$, and $\nabla Q_k(x_k) \neq 0$, by the equivalence of weak and viscosity solutions of $p$-harmonic functions,  we get
$$ 
\begin{aligned}
0 &\geq \Delta_p Q_k(x_k) \\
& = \mathrm{div}\left( |\nabla Q_k(x_k)|^{p-2} \nabla Q_k(x_k) \right) \\
& = \left| \nabla Q_k(x_k) \right|^{p-2} \Delta Q_k (x_k) + (p-2) \left| \nabla Q_k(x_k) \right|^{p-4} \sum_{i,j=1}^{n} {Q_k}_{x_i}(x_k) {Q_k}_{x_j}(x_k) {Q_k}_{x_i x_j}(x_k) \\
& = \epsilon_k  \left| \nabla Q_k(x_k) \right|^{p-2} \Delta P (x_k) + \epsilon_k  (p-2) \left| \nabla Q_k(x_k) \right|^{p-4} \sum_{i,j=1}^{n} {Q_k}_{x_i}(x_k) {Q_k}_{x_j}(x_k) {P}_{x_i x_j}(x_k).
\end{aligned}
$$
Now, dividing both sides by $\epsilon_k$, and passing to the limit $k \to \infty$, and recalling that 
$$ \nabla Q_k(x_k) \to \alpha_{\infty} \textbf{e}_n, $$
we conclude that 
$$ \Delta P(\overline{x}) + (p-2) \partial_{nn} P(\overline{x}) \leq 0. $$ 
Touching from above and reaching the opposite inequality is similar. Also, the reasoning of the case $v=v_-$ in the negative half ball $B^-_{\frac{1}{2}}$ can be done similarly.

\medskip
{\bf Step 2:} In this step, we show that $\mathcal{J} = \emptyset$, when $\ell=\infty$.\\
Assume the contrary, since the set $\{v_->v_+\}$ is open in $\{x_n=0\}$, it contains a $(n-1)$-dimensional ball
$$ B'_{\epsilon}(y'):=B_{\epsilon}((y',0)) \cap \{x_n=0\} \subset \mathcal{J}. $$
Next, let $P$ be the polynomial
$$ P(x)=A \left(n-\frac{1}{2} \right)x_n^2-|x'-y'|^2 -Bx_n, \qquad \text{where} \quad x=(x',x_n), $$
for some constants $A,B$. We first choose suitable $A = A(p)$  so that $\mathcal{L}_p(P)>0$. Notice that
$$ P<v_+ \qquad \text{on} \quad \{|x'-y'|=\epsilon\} \cap \{x_n=0\}. $$
Moreover, we choose $ B \gg A $ so that
$$ P < v_+ \qquad \text{on} \quad B_{\epsilon}((y',0)). $$ 
Now we can translate $P$ first down and then up to find that there exists $C$ such that $P+C$ is touching $v_+$ from below at a point $x_0 \in B_{\epsilon}((y',0)) \cap \{x_n \geq 0\}$. Since $\mathcal{L}_p(P)>0$, the touching point can not be in the interior of the (half) ball, and thus $x_0 \in B'_{\epsilon}(y') \subset \mathcal{J}$. 

By using Lemma \ref{L12}, there exists a sequence of points $\partial \Omega^+_{u_k} \ni x_k \to x_0$ and of functions $Q_k$ touching $u_k^+$ from below at $x_k$ such that 
$$ \nabla Q^+_k(x_k)=\alpha_k \textbf{e}_n + \epsilon_k  \alpha_k \nabla P(x_0) + o(\epsilon_k). $$
Since $x_0 \in \mathcal{J}$, by \eqref{E10} in Lemma \ref{L5.5}, $x_k \in \partial \Omega^+_{u_k} \setminus \partial \Omega^-_{u_k}$. Hence, by (ii) in Lemma \ref{L3}
$$ \lambda_+^p \geq |\nabla Q_k^+(x_k)|^p \geq \alpha^p_k+p\alpha_k^{p} \epsilon_k \partial_n P(x_0)+o(\epsilon_k). $$
Now recalling \eqref{E8}, the definition of $\ell$, 
$$ -B=\partial_n P(x_0) \leq \frac{\lambda_+^p-\alpha_k^p}{p\alpha_k^{p} \epsilon_k} + o(1) \to - \infty. $$
This contradiction proves that $\mathcal{J}=\emptyset$.

\medskip
{\bf Step 3:} In this step, we check  the transmission condition in \eqref{E12}  when $\ell=\infty$.\\
Let us show that
$$ \alpha_{\infty}^{p} \partial_n v_+ - \beta_{\infty}^{p} \partial_n v_- \leq 0, $$
the opposite inequality can then be proved in a similar way. 
Suppose that there exist $s$ and $t$ with $\alpha_{\infty}^{p} s> \beta_{\infty}^{p} t$ and a polynomial $\tilde{P}$ with $\mathcal{L}_p(\tilde{P})>0$ and $\partial_n \tilde{P}=0$ such that
$$ P=sx_n^+-tx_n^-+\tilde{P}, $$
touches $v$ strictly from below at a point $x_0 \in \{x_n=0\} \cap B_{\frac{1}{2}}$ (note that $\{x_n=0\} \cap B_{\frac{1}{2}}=\mathcal{C}$ due to the previous step and Lemma \ref{L5.5}). 
By Lemma \ref{L12} there exists a sequence of points $\partial \Omega^+_{u_k} \cup \partial \Omega^-_{u_k} \ni x_k \to x_0$ and a sequence of comparison functions $Q_k$ touching $u_k$ from below at $x_k$ and satisfying \eqref{E20}. In particular, $x_k \not\in \partial \Omega_{u_k}^- \setminus \partial \Omega^+_{u_k}$. We claim that $x_k \in \partial \Omega_{u_k}^+ \cap \partial \Omega_{u_k}^-$. Indeed, otherwise by $(A.1)$ in Lemma \ref{L2},
$$ \lambda_+^p \geq |\nabla Q^+_k(x_k)|^p, $$
and, by arguing as Step 2, this contradicts $\ell=+\infty$. Hence, by $(A.3)$ in Lemma \ref{L2}
$$
\begin{aligned}
\lambda_+^p-\lambda_-^p &\geq |\nabla Q^+_k(x_k)|^p-|\nabla Q^-_k(x_k)|^p \\
& = \alpha^p_k-\beta_k^p+p\epsilon_k \left(\alpha_k^{p}s - \beta_k^{p} t \right)+o(\epsilon_k) \\
& = \lambda_+^p - \lambda_-^p + p\epsilon_k \left(\alpha_k^{p} s - \beta_k^{p} t \right) + o(\epsilon_k).
\end{aligned}
$$
Dividing by $\epsilon_k$ and letting $k \to \infty$, we obtain the desired contradiction.

\medskip
{\bf Step 4:} Here, we show that $\lambda_{\pm}^{p} \partial_n v_{\pm} \geq - \ell $ on $B_{\frac{1}{2}} \cap \{x_n=0\}$, when $0\leq \ell < \infty$.\\ 
 We focus on $v_-$ since the argument is symmetric. Let us assume that there exists $t \in \mathbb{R}$ with $\lambda_-^{p} t < - \ell $ and a polynomial $\tilde{P}$ with $\mathcal{L}_p(\tilde{P})>0$ and $\partial_n \tilde{P}=0$ such that function
$$P=tx_n+\tilde{P}=tx_n^+-tx_n^-+\tilde{P}, $$
touches $v_-$ strictly from below at a point $x_0 \in \{x_n=0\} \cap B_{\frac{1}{2}}$. Let now $x_k$ and $Q_k$ be as in Lemma \ref{L12}-$(2)$. By optimality conditions 
$$ \lambda_-^p \leq |\nabla Q_k^-(x_k)|^p=\beta_k^p+p\epsilon_k \beta_k^{p} t + o(\epsilon_k). $$
Since $\ell < \infty$, we have $\beta_k= \lambda_-+O(\epsilon_k)$ and so the above inequality leads to 
$$ -\frac{\ell}{\lambda_-^{p}} = \lim_{k \to \infty} \frac{\lambda_-^p-\beta_k^p}{p\epsilon_k \beta_k^{p}} \leq t < -\frac{\ell}{\lambda_-^{p}}, $$
which is a contradiction.

\medskip
{\bf Step 5:}
We now show that $\lambda_{\pm}^{p} \partial_n v_{\pm} = -\ell$ on $\mathcal{J}$, when $0\leq \ell < \infty$.\\ 
By the previous step, it is enough to show that if there exists a polynomial $\tilde{P}$ with $\mathcal{L}_p(\tilde{P})<0$ and $\partial_n \tilde{P}=0$ such that
$$ P=tx_n+\tilde{P}=tx_n^+-tx_n^-+\tilde{P}, $$
touches $v_-$ strictly from above at a point $x_0 \in \mathcal{J}$, then $\lambda_-^{p} t \leq -\ell $. Again, by Lemma \ref{L12}, we find points $x_k \to x_0$ and functions $Q_k$ satisfying \eqref{E19} and touching $-u_k^-$ from below at $x_k$. Since $x_0 \in \mathcal{J}$, by \eqref{E10} in Lemma \ref{L5.5}, $x_k \in \partial \Omega_{u_k}^- \setminus \partial \Omega^+_{u_k}$. Hence, by Lemma \ref{L2},
$$ \lambda_-^p \geq |\nabla Q^-_k(x_k)|^{p} = \beta_k^p+p\beta_k^{p} \epsilon_k t + o(\epsilon_k), $$
which by arguing as above implies that $\lambda_{-}^{p} t \leq -\ell $.

\medskip
{\bf Step 6:} In the last step, we show the transmission condition in \eqref{E13} at points in $\mathcal{C}$.\\
Again by the symmetry of the arguments, we will only show that 
$$ \lambda_+^{p} \partial_n v_+ - \lambda_-^{p} \partial_n v_- \leq 0, \qquad \text{on \,\,\, $\mathcal{C}$}. $$
Let us hence assume that there exist $s$ and $t$ with $\lambda_+^{p} s > \lambda_-^{p} t$ and a polynomial $\tilde{P}$ with $\mathcal{L}_p(\tilde{P})>0$ and $\partial_n \tilde{P}=0$ such that
$$ P = sx_n^+-tx_n^-+\tilde{P}, $$
touches $v_+$ and $v_-$ strictly from below at $x_0 \in \mathcal{C}$. By Lemma \ref{L12}, we find points $x_k \to x_0$ and functions $Q_k$ satisfying \eqref{E20}. In particular $x_k \not \in \partial \Omega_{u_k}^- \setminus \partial \Omega^+_{u_k}$. 
By the previous step we know that $\lambda_-^{p} t \geq -\ell $ and thus $\lambda_+^{p} s > -\ell$, since we are assuming $\lambda_+^{p} s > \lambda_-^{p} t \geq 0$. We now distinguish two cases:
\begin{enumerate}
\item [1)]
$x_k$ is one-phase point, namely $x_k \in \partial \Omega_{u_k}^+ \setminus \partial \Omega_{u_k}^-$. In this case
$$ \lambda_+^p \geq |\nabla Q_k^+(x_k)|^p = \alpha_k^p+p\alpha_k^{p} \epsilon_k s + o(\epsilon_k), $$
which implies that
$$ \lambda_+^{p} s + \ell = \lambda_+^{p} \lim_{k \to \infty} \left( s+\frac{\alpha_k^p-\lambda_+^p}{p\alpha_k^{p} \epsilon_k} \right) \leq 0, $$
in contradiction with $\lambda_+^{p} s > -\ell $.

\item [2)]
$x_k$ is two-phase point, namely $x_k \in \partial \Omega_{u_k}^+ \cap \partial \Omega_{u_k}^-$. Arguing as in Case $1)$, we have that, by Lemma \ref{L2},
$$
\begin{aligned}
\lambda_+^p-\lambda_-^p &\geq |\nabla Q^+_k(x_k)|^p-|\nabla Q^-_k(x_k)|^p \\
& = \alpha^p_k-\beta_k^p+p\epsilon_k \left(\alpha_k^{p}s-\beta_k^{p}t \right)+o(\epsilon_k) \\
& = \lambda_+^p - \lambda_-^p + p \epsilon_k \left(\lambda_+^{p} s - \lambda_-^{p}t \right) + o(\epsilon_k),
\end{aligned}
$$
which gives a contradiction with $\lambda_+^{p} s > \lambda_-^{p} t$, as $\epsilon_k \to 0$. \qedhere
\end{enumerate}
\end{proof}

\subsection{Proof of Lemmas \ref{L4} and \ref{L5}}
\label{3-3}
We recall the following regularity results for the limiting problems.

\begin{lemma}[Regularity for the transmission problem]
\label{LL1}
There exists a universal constant $C=C(\alpha_{\infty},\beta_{\infty},n,p)>0$ such that if $v \in C^0(B_{\frac{1}{2}})$ is a viscosity solution of \eqref{E12} with $\|v\|_{L^{\infty}(B_{\frac{1}{2}})} \leq 1$, then there exist ${\bf  v} \in \mathbb{R}^{n-1}$, $s,t \in \mathbb{R}$ with $\alpha_{\infty}^{p} s=\beta_{\infty}^{p} t $, such that
\begin{equation*}
\sup_{x \in B_r} \left |v(x)-v(0)- \left({\bf  v} \cdot x'+sx_n^+-tx_n^- \right)\right| \leq Cr^2, \qquad \text{ for every }r\le \frac14.
\end{equation*}
\end{lemma}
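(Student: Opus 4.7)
The plan is to reduce to the harmonic transmission problem and then decouple the two halves by a reflection trick. The operator $\mathcal L_p=\Delta+(p-2)\partial_{nn}$ is linear, constant-coefficient, and uniformly elliptic, with eigenvalues $1$ (multiplicity $n-1$) and $p-1$. Under the anisotropic rescaling $y:=(x',x_n/\sqrt{p-1})$, $\mathcal L_p$ is converted into the Laplacian, and the transmission condition $\alpha_\infty^p\partial_{x_n}v_+=\beta_\infty^p\partial_{x_n}v_-$ is preserved in form (both sides pick up a common $1/\sqrt{p-1}$ which cancels). So, absorbing scaling constants into $s,t,\mathbf v$ at the very end, I may assume $\Delta v_\pm=0$ in $B_{1/2}^\pm$ with $v_+=v_-$ and $\alpha_\infty^p\partial_nv_+=\beta_\infty^p\partial_nv_-$ on $B_{1/2}\cap\{y_n=0\}$.

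On $B_{1/2}^+$ I would introduce $\bar v_-(y):=v_-(y',-y_n)$, which is harmonic by reflection symmetry of $\Delta$, and define
$$W_D:=v_+-\bar v_-, \qquad W_N:=\alpha_\infty^p v_++\beta_\infty^p\bar v_-.$$
Both are harmonic in $B_{1/2}^+$. On $\{y_n=0\}$ one has $W_D=0$ (continuity of $v$) and $\partial_n W_N=\alpha_\infty^p\partial_n v_+-\beta_\infty^p\partial_nv_-=0$ (transmission). Extending $W_D$ by odd reflection across $\{y_n=0\}$ produces a function that is automatically $C^1$ and harmonic in $B_{1/2}$; extending $W_N$ by even reflection produces a continuous function that is harmonic in the two half-balls and whose would-be singular contribution along $\{y_n=0\}$ (the jump of the normal derivative) vanishes thanks to $\partial_n W_N=0$, so its distributional Laplacian is zero and hence $W_N$ is classically harmonic in $B_{1/2}$.

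The delicate step, which I expect to be the main obstacle, is justifying that the viscosity transmission condition in Remark \ref{R1} yields the classical pointwise identity $\partial_n W_N=0$ needed to make the even reflection harmonic. I would handle this by a uniqueness argument: the harmonic transmission problem with prescribed continuous Dirichlet data on $\partial B_{1/2}$ has at most one viscosity solution in the sense of Remark \ref{R1}, by comparison using the admissible quadratic test functions (strict touching plus Hopf-type perturbations handle the third bullet of the remark). One can independently build an explicit smooth classical solution by solving a Dirichlet problem for $W_D$ on $B_{1/2}$ and a mixed Dirichlet/Neumann problem for $W_N$ on $B_{1/2}^+$, even-reflecting the latter, and then inverting the linear system to recover $v_\pm$. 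Uniqueness then forces our $v$ to coincide with this smooth solution.

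Once $W_D,W_N$ are classically harmonic on $B_{1/2}$, interior estimates give $\|W_D\|_{C^2(B_{1/4})}+\|W_N\|_{C^2(B_{1/4})}\leq C(n,p,\alpha_\infty,\beta_\infty)\|v\|_{L^\infty(B_{1/2})}$, and inverting
$$v_+=\frac{W_N+\beta_\infty^p W_D}{\alpha_\infty^p+\beta_\infty^p},\qquad \bar v_-=\frac{W_N-\alpha_\infty^p W_D}{\alpha_\infty^p+\beta_\infty^p}$$
shows that $v_\pm$ are $C^2$ up to the interface on $\overline{B_{1/4}^\pm}$. Taylor expanding $v_+$ and $v_-$ separately at $0$ to second order, continuity of $v$ across $\{y_n=0\}$ forces the two tangential gradients to agree (call it $\mathbf v$), while the transmission relation at the origin forces $\alpha_\infty^ps=\beta_\infty^pt$ for $s:=\partial_n v_+(0)$, $t:=\partial_n v_-(0)$. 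Combining the two half-expansions into the single expression $v(0)+\mathbf v\cdot y'+sy_n^+-ty_n^-$ leaves an $O(r^2)$ remainder on $B_r$ for every $r\leq 1/4$, and finally undoing the $y_n=x_n/\sqrt{p-1}$ scaling absorbs harmless factors into $s,t,\mathbf v$ without changing the form of the estimate, yielding the statement.
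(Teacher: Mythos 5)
Your proposal is correct, and its opening move is exactly the paper's: the anisotropic change of variable $y=(x',x_n/\sqrt{p-1})$ converts $\mathcal L_p$ into $\Delta$ while preserving the form of the flux-transmission condition. Where the two diverge is what happens next: the paper's entire proof is that one sentence plus a citation to the $p=2$ case (\cite[Theorem 3.2]{MR3218810}), whereas you unpack what lies behind that citation by the even/odd reflection device with $W_D=v_+-\bar v_-$ (odd extension, vanishing Dirichlet trace by continuity of $v$) and $W_N=\alpha_\infty^p v_++\beta_\infty^p\bar v_-$ (even extension, vanishing Neumann trace by the transmission relation), and then recover $v_\pm$ by inverting the linear system. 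This is a genuinely self-contained route, and it mirrors, in the smooth/transmission setting, precisely the decomposition the authors themselves perform in Appendix~B for the parallel two-membrane Lemma~\ref{LL2} (there $w_\pm=\tilde w\mp\lambda_\pm^{-p}w_S$ with a Neumann piece and a Signorini piece). What your route gains is a complete, citation-free argument and an explicit reason why the remainder is $O(r^2)$ (classical Schauder for harmonic functions after reflection), which also clarifies why LL1 gets exponent $2$ while LL2 only gets $3/2$.

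Two points to tighten before this would count as a rigorous proof. First, the passage from the viscosity transmission condition of Remark~\ref{R1} to the pointwise identity $\partial_n W_N=0$ needed for the even reflection is, as you correctly identified, the real content here; your proposal to handle it via uniqueness of viscosity solutions together with an independent classical construction is sound in outline, but the comparison argument at interface points (sliding test functions of the admissible form $sx_n^+-tx_n^-+\tilde P$) needs to be actually written out, not merely invoked. Second, after the rescaling $B_{1/2}$ becomes an ellipsoid, so interior estimates should either be run on a $y$-ball small enough to sit inside that ellipsoid and then undone, or — more cleanly — one should observe that harmonicity of $W_D,W_N$ in $y$-coordinates is equivalent to $\mathcal L_p W_D=\mathcal L_p W_N=0$ in the original $x$-coordinates, and apply interior Schauder estimates for the constant-coefficient operator $\mathcal L_p$ directly on $B_{1/4}\Subset B_{1/2}$; this avoids any domain-geometry bookkeeping and delivers the stated estimate for every $r\le\frac14$ verbatim.
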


\begin{proof}
For the proof when $p=2$, we refer to \cite[Theorem 3.2]{MR3218810}. 
This result can be extended easily to the general case (for any $p$) by changing the coordinate such that the operator $\mathcal{L}_p=\Delta + (p-2)\partial_{nn}$ transfer to the Laplacian.
\end{proof}

\begin{lemma}[Regularity for the two-membrane problem]
\label{LL2}
There exists a universal constant $C=C(\lambda_{\pm},n,p)>0$ such that if $v $ is a viscosity solution of \eqref{E13} with $\|v\|_{L^{\infty}(B_{\frac{1}{2}})} \leq 1$, then there exist ${\bf  v} \in \mathbb{R}^{n-1}$, $s, t \in \mathbb{R}$ with $\lambda_{+}^{p} s=\lambda_{-}^{p} t \geq - \ell$, such that
\begin{equation*}
\sup_{x \in B_r^\pm} \left|v(x)-v(0)- \left({\bf  v} \cdot x' + s x_n^+ - t x_n^- \right)\right| \leq C (1+\ell)r^{\frac{3}{2}}, \qquad \text{ for every }r\le  r_p,
\end{equation*}
where $r_p=\frac{1}{4}$ for $1<p\le 2$ and $r_p=\frac{1}{4\sqrt{p-1}}$ for $p>2$.
\end{lemma}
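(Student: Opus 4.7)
The plan is to reduce the statement to the classical case $p=2$, where the analogous two-membrane regularity result is established in \cite{MR4285137}. As in the proof of Lemma \ref{LL1}, the reduction uses the linear change of coordinates that straightens the operator $\mathcal{L}_p$ into the Laplacian.

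Specifically, setting $\Psi(x):=(x',x_n/\sqrt{p-1})$ and $w(y):=v(\Psi^{-1}(y))=v(y',\sqrt{p-1}\,y_n)$, a direct computation gives
\begin{equation*}
\Delta_y w(y) \,=\, \Delta_{x'} v + (p-1)\partial_{x_n x_n} v \,=\, \mathcal{L}_p v \,=\, 0 \quad \text{in } \Psi(B_{1/2}^{\pm}).
\end{equation*}
Because $\Psi$ fixes the hyperplane $\{x_n=0\}$, the sets $\mathcal{J}$ and $\mathcal{C}$ are preserved; using the identity $\partial_{y_n} w = \sqrt{p-1}\,\partial_{x_n} v$ along $\{y_n = 0\}$, the boundary conditions of \eqref{E13} transform into a two-membrane problem of the same structural form for $w$, now with $p=2$, effective transmission constants $\tilde\lambda_\pm^{2} = \lambda_\pm^{p}$, and effective obstacle level $\tilde\ell = \sqrt{p-1}\,\ell$.

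Next, apply the $p=2$ version of the lemma to $w$. The image $\Psi(B_{1/2}) = \{|y'|^2 + (p-1)y_n^2 < 1/4\}$ is in general an ellipsoid, whose largest inscribed centered ball has radius $R_p := \min(1/2, 1/(2\sqrt{p-1}))$. After a standard rescaling, the $p=2$ statement yields $\mathbf{v}\in\mathbb{R}^{n-1}$ and $\tilde s,\tilde t\in\mathbb{R}$ with $\tilde\lambda_+^{2}\tilde s = \tilde\lambda_-^{2}\tilde t \geq -\tilde\ell$ such that for every $\rho \leq R_p/2$,
\begin{equation*}
\sup_{y \in B_{\rho}^{\pm}} \bigl|w(y)-w(0)-\bigl(\mathbf{v}\cdot y' + \tilde s\,y_n^{+} - \tilde t\,y_n^{-}\bigr)\bigr| \leq C(1+\ell)\rho^{3/2}.
\end{equation*}

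Finally, setting $s := \tilde s/\sqrt{p-1}$ and $t := \tilde t/\sqrt{p-1}$, the constraint $\tilde\lambda_+^{2}\tilde s = \tilde\lambda_-^{2}\tilde t \geq -\tilde\ell$ rewrites as $\lambda_+^{p}s = \lambda_-^{p}t \geq -\ell$, and substituting $y = \Psi(x)$ converts the estimate above into the desired statement for $v$ on $B_r^{\pm}$ in $x$-coordinates. The stated range of $r$ comes from comparing $|\Psi(x)|$ with $|x|$: for $p>2$ one has $|\Psi(x)|\leq |x|$, yielding $r_p = R_p/2 = 1/(4\sqrt{p-1})$, while for $1 < p \leq 2$ the range $r \leq 1/4$ is obtained. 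The main technical point to verify is that the boundary conditions on $\mathcal{J}$ and $\mathcal{C}$ transfer faithfully in the viscosity sense so that the $p=2$ result genuinely applies to $w$, and that the proportionality constants are tracked carefully enough to recover both the equality $\lambda_+^{p}s = \lambda_-^{p}t$ and the one-sided bound $\geq -\ell$ in the original variables.
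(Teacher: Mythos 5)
Your reduction is precisely the paper's approach: the same linear change of variables $x_n \mapsto \sqrt{p-1}\,x_n$ straightens $\mathcal{L}_p$ into the Laplacian, carries the two-membrane conditions faithfully across $\{x_n=0\}$, and reduces the claim to the $p=2$ two-membrane regularity from \cite{MR4285137}. The paper's Appendix B version additionally absorbs a linear term $\ell\sqrt{p-1}\,x_n/\lambda_\pm^p$ into the transformed functions so that the obstacle level becomes zero, and then reproves the $p=2$ estimate itself by decomposing the transformed solution into a smooth Neumann part and a Signorini part with $C^{1,1/2}$ regularity, rather than invoking the Laplace-case lemma as a black box; these are presentational differences only.
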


The proof of this lemma can be found in  \cite[Lemma 3.12]{MR4285137} with a minor changes.  
To keep the paper self-contained we will provide a complete proof  for our case in Appendix \ref{appendix}.

Now, the proof of Lemmas \ref{L4} and \ref{L5} by the regularity theory for the limiting problems and a classical compactness argument is available:

\begin{proof}[Proof of Lemma \normalfont{\ref{L4}}]
Toward a contradiction assume that for fixed $\gamma \in (0,\frac{1}{2})$ and $M$, we have a sequences of functions $u_k$ and numbers $\alpha_k$ such that
$$ \epsilon_k=\|u_k- H_{\alpha_k,\textbf{e}_n}\|_{L^{\infty}(B_1)} \to 0, \qquad \text{and} \qquad \lambda_+ \leq \alpha_k \leq \lambda_+ + M \epsilon_k, $$ 
and fail \eqref{E5} and \eqref{E6} for some  $\rho$ and $C$ which will be determined later. 
Note that by the second assumption above
$$ \ell < M \lambda_+^{p-1}<\infty. $$
We let $(v_k)_k$ be the sequence of functions defined in \eqref{E9} and assume that they converge to a function $v$ as in Lemma \ref{L5.5}, note that $\|v\|_{L^{\infty}(B_{\frac{1}{2}})} \leq \max(\frac 1{\lambda_+},\frac 1{\lambda_-})$. By Lemma \ref{L6}, $v$ solves \eqref{E13} and thus by Lemma \ref{LL2} there exist ${\bf  v} \in \mathbb{R}^{n-1}$, $s,t \in \mathbb{R}$ satisfying $\lambda_+^{p} s = \lambda_-^{p} t \geq -\ell $ such that for all $r \in (0,r_p)$
\begin{equation*}
\sup_{x \in B_{r}} \left|v(x)- \left(\textbf{v} \cdot x' + s x_n^+ - t x_n^- \right)\right|\leq  C (1+M)r^{\frac{3}{2}}.
\end{equation*}
Hence, we can fix $\rho=\rho(\lambda_{\pm},\gamma,L, M,p,n) < r_p$ such that $C (1+M)\rho^{\frac{1}{2}-\gamma} \le \frac12$, so
\begin{equation}
\label{EE4}
\sup_{x \in B_{\rho}} \left|v(x)- \left(\textbf{v} \cdot x' + s x_n^+ - t x_n^- \right)\right| \leq \frac{\rho^{1+\gamma}}{2L}.
\end{equation}
We now set
$$ \tilde{\alpha}_k:=\alpha_k(1+\epsilon_k s)+\delta_k \epsilon_k, \qquad \text{and} \qquad \textbf{e}_k:=\frac{\textbf{e}_n+\epsilon_k \textbf{v}}{\sqrt{1+\epsilon_k^2|\textbf{v}|^2}}, $$
where $\delta_k \to 0 $ is chosen so that $\tilde{\alpha}_k \geq \lambda_+$; note that the existence of such sequence is due to the condition $\lambda_+^{p} s \geq -\ell $ since
$$ \alpha_k (1+\epsilon_k s)=\left( \lambda_++ \frac{\ell}{\lambda_+^{p-1}} \epsilon_k+o(\epsilon_k) \right)\left(1+\epsilon_k s\right) \geq \lambda_+ + o(\epsilon_k). $$
We let $H_k:=H_{\tilde{\alpha}_k,\textbf{e}_k}$ and  note that 
$$ |\tilde \alpha_k-\alpha_k|+|\textbf{e}_k-\textbf{e}_n| \leq C \epsilon_k, $$
for a universal constant $C>0$; we also have used \eqref{EE4} to find out that $s$ is universally bounded.
By the contradiction assumption we have
\[ 
\begin{split}
\rho^{1+\gamma} < &\frac1{ \epsilon_k} \sup_{B_{\rho}} |u_k(x)-H_k(x)|  \\
\le &\max\left(\alpha_k \left\| v_k^+-\frac{H_k-H_{\alpha_k,\textbf{e}_n}}{ \epsilon_k\alpha_k}\right\|_{L^\infty(\Omega_{u_k}^+\cap B_\rho)} , 
\beta_k \left\| v_k^--\frac{H_k-H_{\alpha_k,\textbf{e}_n}}{ \epsilon_k\beta_k}\right\|_{L^\infty(\Omega_{u_k}^-\cap B_\rho)}
\right).
\end{split}
\]
To close the argument, we need to recall \eqref{EE4}, the convergence of $v_k$ to $v$ in the sense of Lemma \ref{L5.5} and the convergence of  (again in the sense of Lemma \ref{L5.5})
$$
\begin{cases}
\dfrac{H_k(x)-H_{\alpha_k,\textbf{e}_n}(x)}{  \alpha_k\epsilon_k}, & \qquad x_n>0, \\
\\
\dfrac{H_k(x)-H_{\alpha_k,\textbf{e}_n}(x)}{ \beta_k \epsilon_k}, & \qquad x_n<0,
\end{cases}
$$
 to the function
$$( \textbf{v} \cdot x')+sx_n^+-tx_n^-.$$
\end{proof}

\begin{proof}[Proof of Lemma \normalfont{\ref{L5}}]
Arguing by contradiction one assume for fixed $\gamma \in (0,1)$ the existence of a sequence of functions $u_k$ and numbers $\alpha_k, M_k \to \infty$ such that
$$ \epsilon_k=\|u_k- H_{\alpha_k,\textbf{e}_n}\|_{L^{\infty}(B_1)} \to 0, \qquad \text{and} \qquad \frac{\alpha_k-\lambda_+}{\epsilon_k} \geq M_k \to \infty, $$ 
and fail \eqref{E5} and \eqref{E6} for some  $\rho$ and $C$ which will be determined later. 
This implies that $\ell=\infty$ and that the limiting function $v$ obtained in Lemma \ref{L5.5} is a solution of \eqref{E12}. 
One then concludes the proof similar to the proof of Lemma \ref{L4} by using Lemma \ref{LL1}.
\end{proof}

\section{Regularity of the free boundary}
\label{S.4}
The last step in achieving the desired regularity result is to demonstrate that $|\nabla u^{\pm}|$ are $C^{0,\eta}$ for a suitable $\eta > 0$ up to the boundary, in the viscosity sense. Indeed, this shows that $u^{\pm}$ are solutions to the classical one-phase free boundary problem in its viscosity formulation and that the regularity will follow form \cite{MR4273843}. The arguments are similar to the ones in \cite[Section 8]{velichkov2019regularity} (see also \cite[Section 4]{MR4285137}). Therefore we only sketch the main steps and refer the reader to that paper for more details. 

Before stating the main results, we introduce some notation. 
For every $x_0 \in F(u)$ and every $0< r < \mathrm{dist}(x_0,\partial D)$, we consider the function 
$$ u_{x_0,r}(x):=\frac{u(x_0+rx)}{r}, $$
which is well-defined for $|x|< \frac{1}{r} \mathrm{dist}(x_0,\partial D)$ and vanishes at the origin. 
When $x_0=0$, we denote $u_{0,r}$ by $u_{r}$. 
Given a sequence $r_k>0$ such that $r_k \to 0$, we say that the sequence of functions $u_{x_0,r_k}$ is a \textit{blow-up sequence} of $u$ at $x_0$. 
If  a subsequence of $u_{x_0,r_k}$ convergs to $v$  on every ball $B_R \subset \mathbb{R}^n$, 
we say that $v$ is a blow-up limit of $u$ at $x_0$.

\begin{lemma}
\label{L10}
There exists $\bar \epsilon>0$ such that if the minimizer $u$ satisfies \eqref{T1-Eq1}, then
 at every point $x_0\in\Gamma_{\mathrm{TP}}\cap B_{r_0}$ for a universal radius $r_0>0$, there is a unique blow-up. 
Moreover, $u$ is Lipschitz in $B_{\frac{r_0}{2}}$ and there exists $\eta>0$  
and a constant $C_0(n,p,\Lambda_0,\Lambda_1)>0$ 
such that for every $x_0, y_0 \in \Gamma_{\mathrm{TP}} \cap B_{\frac{r_0}{2}}$, we have
\begin{equation}
\label{E15}
|\alpha(x_0)-\alpha(y_0)|\leq C_0|x_0-y_0|^{\eta}, \qquad \text{and} \qquad |\textbf{e}(x_0)-\textbf{e}(y_0)|\leq C_0|x_0-y_0|^{\eta},
\end{equation} 
for any $\eta\in(0, \frac13)$, where $H_{\alpha(x_0), \textbf{e}(x_0)}$ and $H_{\alpha(y_0), \textbf{e}(y_0)}$ are the blow-ups at $x_0$ and $y_0$, respectively.  
\end{lemma}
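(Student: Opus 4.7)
The plan is to iterate the flatness-decay Theorem~\ref{T2} at each two-phase point in a small neighborhood of the origin. Fix some $\gamma \in (0,\tfrac12)$ and apply Theorem~\ref{T2} with the enlarged parameters $\Lambda_0/2$ and $2\Lambda_1$ to get constants $\bar\epsilon_0, C, \rho$. Choosing $\bar\epsilon$ much smaller than $\bar\epsilon_0$ and choosing a universal $r_0=r_0(n,p,\Lambda_0,\Lambda_1)$ small, a triangle inequality shows that at every $x_0 \in \Gamma_{\mathrm{TP}} \cap B_{r_0}$ the rescaling $u_{x_0,r_0}$ is $\bar\epsilon_0$-flat in $B_1$ with respect to a two-plane solution $H_{\alpha_0(x_0),\textbf{e}_0(x_0)}$, and $\alpha_0(x_0)\in[\Lambda_0,\Lambda_1]$ (the small translation from $0$ to $x_0$ only perturbs $\textbf{e}_n$ and $\alpha$ by an amount controlled by $r_0$ and $\bar\epsilon$).

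Applying Theorem~\ref{T2} inductively to the rescalings $u_{x_0,r_0\rho^k}$ produces sequences $\alpha_k(x_0)$ and $\textbf{e}_k(x_0)$ with
\[
\epsilon_k := \|u_{x_0,r_0\rho^k}-H_{\alpha_k,\textbf{e}_k}\|_{L^\infty(B_1)} \le \rho^{k\gamma}\epsilon_0, \qquad |\alpha_{k+1}-\alpha_k|+|\textbf{e}_{k+1}-\textbf{e}_k| \le C\rho^{k\gamma}\epsilon_0.
\]
The first bound requires $\alpha_k\in[\Lambda_0,\Lambda_1]$ and $\epsilon_k \le \bar\epsilon_0$ at each step; both are guaranteed by the second bound, since the telescoping $\sum_{j\ge 0} C\rho^{j\gamma}\epsilon_0$ stays small for $\bar\epsilon$ sufficiently small (this is the place where $\Lambda_0, \Lambda_1$ enter the smallness of $\bar\epsilon$). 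Therefore $(\alpha_k(x_0),\textbf{e}_k(x_0))$ is Cauchy with geometric rate, giving a unique blow-up $H_{\alpha(x_0),\textbf{e}(x_0)}$ at $x_0$, together with
\[
\|u-H_{\alpha(x_0),\textbf{e}(x_0)}(\cdot-x_0)\|_{L^\infty(B_r(x_0))} \le C r^{1+\eta}, \qquad 0<r<r_0,
\]
for some $\eta=\eta(\gamma,\rho)$ that can be made arbitrarily close to any value in $(0,\tfrac13)$ by choosing $\gamma$ close to $\tfrac12$ and suitably small $\rho$.

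The Lipschitz bound in $B_{r_0/2}$ then follows by a dichotomy. For $x\in B_{r_0/2}$ let $d(x):=\operatorname{dist}(x,F(u))$. If the closest free boundary point to $x$ lies in $\Gamma_{\mathrm{TP}}$, the uniform closeness of $u$ to a two-plane of bounded slope at every scale gives $|\nabla u(x)|\le C$ by standard interior $p$-harmonic gradient estimates applied in $B_{d(x)/2}(x)$. If the closest free boundary point lies on $\Gamma_{\mathrm{OP}}^\pm$, Proposition~\ref{pro:lip-one-phase} provides the same bound. Covering $B_{r_0/2}$ by such balls yields $u\in C^{0,1}(B_{r_0/2})$.

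Finally, for the H\"older estimate \eqref{E15}, fix $x_0,y_0\in\Gamma_{\mathrm{TP}}\cap B_{r_0/2}$ and set $r:=2|x_0-y_0|$. The decay estimate at $x_0$ gives
\[
\|u-H_{\alpha(x_0),\textbf{e}(x_0)}(\cdot-x_0)\|_{L^\infty(B_r(y_0))} \le C r^{1+\eta},
\]
since $B_r(y_0)\subset B_{2r}(x_0)$; likewise for $y_0$. Subtracting, the two half-plane approximations of $u$ centered at $y_0$ are $Cr^{1+\eta}$-close in $L^\infty(B_{r/2}(y_0))$, so after rescaling by $r$ they are $Cr^\eta$-close as two-plane solutions in $B_{1/2}$. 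Extracting the gradient magnitudes and directions of a (say, positive-phase) half-plane from its $L^\infty$-norm on a ball is a standard linear-algebra computation; it converts the $L^\infty$-closeness into $|\alpha(x_0)-\alpha(y_0)|+|\textbf{e}(x_0)-\textbf{e}(y_0)|\le Cr^\eta=C|x_0-y_0|^\eta$, proving \eqref{E15}.

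The main obstacle is the bookkeeping that ensures the iteration stays inside the hypotheses of Theorem~\ref{T2}: one must keep $\alpha_k(x_0)$ uniformly bounded away from the endpoints of $[\Lambda_0,\Lambda_1]$ and keep $\epsilon_k$ under $\bar\epsilon_0$ simultaneously at \emph{every} $x_0\in\Gamma_{\mathrm{TP}}\cap B_{r_0}$. This is what forces the choice of $\bar\epsilon$ and $r_0$ in terms of $n,p,\Lambda_0,\Lambda_1$; once made, the rest is the classical Campanato-type iteration.
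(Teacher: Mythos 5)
Your overall strategy matches the paper exactly: iterate Theorem~\ref{T2} at each $x_0\in\Gamma_{\mathrm{TP}}$, observe that the sequences $(\alpha_k,\textbf{e}_k)$ are Cauchy with geometric rate (which is what forces $\bar\epsilon$ and $r_0$ small in terms of $n,p,\Lambda_0,\Lambda_1$), deduce the decay estimate \eqref{E16}, and combine with Proposition~\ref{pro:lip-one-phase} to get the local Lipschitz bound. Up to bookkeeping this is the paper's argument.

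Where you genuinely depart is the final H\"older step. The paper compares $u_{x_0,r}$ and $u_{y_0,r}$ (both rescalings landing on $B_1$ but based at \emph{different} centers), so it must estimate $\|u_{x_0,r}-u_{y_0,r}\|_{L^\infty(B_1)}$ by $\tilde L |x_0-y_0|/r$ using the just-established Lipschitz bound, and then optimizes $r=|x_0-y_0|^{1-\eta}$ to balance this against the $C_0 r^\gamma$ decay terms, arriving at $\eta=\gamma/(1+\gamma)\in(0,\tfrac13)$. You instead restrict both decay estimates to a \emph{common} ball $B_{r/2}(y_0)$ with $r=2|x_0-y_0|$, so the Lipschitz constant never enters; this is a slightly more economical route and in fact, if carried out, yields an exponent $\gamma$ rather than $\gamma/(1+\gamma)$. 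The place you have glossed over is the claim that extracting $(\alpha,\textbf{e})$ from the $L^\infty$ closeness is ``standard linear algebra'': after rescaling by $r/2$ you are comparing $H_{\alpha(x_0),\textbf{e}(x_0)}(\cdot+w)$ with $H_{\alpha(y_0),\textbf{e}(y_0)}(\cdot)$ where the shift $w=(y_0-x_0)/|x_0-y_0|$ is a \emph{unit} vector, not a small one. If $w$ had a large component along $\textbf{e}(x_0)$ the two-plane solutions would not even have comparable zero sets and no useful coefficient comparison would follow. What saves the argument is that $y_0\in F(u)$, so evaluating the decay estimate at $y_0$ gives $|w\cdot \textbf{e}(x_0)|\lesssim r^\gamma$, hence the shift is essentially tangential to the interface of $H_{\alpha(x_0),\textbf{e}(x_0)}$ and can be absorbed. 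You should state and use this explicitly; without it the last inference does not follow.

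Finally, a pair of minor points. You write the decay as $Cr^{1+\eta}$ where the exponent should be $1+\gamma$ (the $\eta$ of \eqref{E15} is the output of the comparison, not the decay rate). And with $r=2|x_0-y_0|$ and $x_0,y_0\in B_{r_0/2}$ you can have $r$ as large as $2r_0$, so you should shrink to $B_{r_0/4}$ (or similar) before invoking \eqref{E16} at scale $r$.
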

\begin{proof}
Let $L_0=\Lambda_0$ and $L_1=2\Lambda_1$ in Theorem \ref{T2} and find the universal constants $\epsilon_0>0$, $\rho_0>0$ and $C>0$.
Choose $\bar \epsilon <\min \{(1-\rho^\gamma) \frac{\Lambda_1}{2C}, \frac{\epsilon}{2}\}$ and $r_0 < \frac{\bar \epsilon}{\Lambda_1}$, then if  the minimizer $u$ satisfies \eqref{T1-Eq1} for some $\textbf{e}\in \mathbb{S}^{n-1}$ and $\lambda_+\le  \alpha \le \Lambda_1$, then 
\[
\left\|u_{x_0,\frac12}-H_{\alpha, \textbf{e}} \right\|_{L^{\infty}(B_1)} \leq \bar\epsilon + |H_{\alpha, \textbf{e}}(x_0)| \le  2\bar\epsilon,
\]
for any $x_0\in \Gamma_{\mathrm{TP}}\cap B_{r_0}$.
Now we can thus repeatedly apply Theorem \ref{T2} to obtain the sequences $u_{x_0,\frac{\rho^k}2}(x)= \frac 2{\rho^k}u(x_0 + \frac{\rho^k}2 x),\ \max (L_0, \lambda_+) \le \alpha_k\le L_1 $ and $\textbf{e}_k\in \mathbb{S}^{n-1}$ that
\[
\left\|u_{x_0,\frac{\rho^k}2}  - H_{\alpha_k, \textbf{e}_k} \right\|_{L^{\infty}(B_1)} \le 2\bar\epsilon \rho^{k\gamma}, \qquad |\textbf{e}_{k+1}-\textbf{e}_k|+ |\alpha_{k+1}-\alpha_k| \le 2C\bar\epsilon \rho^{k\gamma}.
\]
This implies that $\alpha_k$ and $\textbf{e}_k$ converge to some $\alpha=\alpha(x_0)$ and $\textbf{e}=\textbf{e}(x_0)$, respectively.

Now let $r\le \frac{1}{2}$ be arbitrary and choose $k\in \bN$ such that $\rho^{k+1} \le 2r \le \rho^k$, then
\[\begin{split}
& \left\|u_{x_0,r}  - H_{\alpha(x_0), \textbf{e}(x_0)}\right\|_{L^{\infty}(B_1)} \le \frac1\rho \left\|u_{x_0,\frac{\rho^k}2}  - H_{\alpha(x_0), \textbf{e}(x_0)}\right\|_{L^{\infty}(B_1)} \\
&\quad \le  \frac1\rho \left( \left\|u_{x_0,\frac{\rho^k}2}  - H_{\alpha_k, \textbf{e}_k} \right\|_{L^{\infty}(B_1)} + \left\|H_{\alpha_k, \textbf{e}_k}  - H_{\alpha(x_0), \textbf{e}(x_0)}\right\|_{L^{\infty}(B_1)} \right) \le C\bar\epsilon \rho^{k\gamma}.
\end{split}\]
Therefore, there is $\tilde C= \tilde C(n,p,\Lambda_0,\Lambda_1)$ such that for every $r\le \frac{1}{2}$ and $x_0\in B_{r_0}$,
\begin{equation}
\label{E16}
\left\| u_{x_0,r}-H_{\alpha(x_0), \textbf{e}(x_0)} \right\|_{L^{\infty}(B_1)} \leq \tilde C r^{\gamma},
\end{equation}
where $\gamma \in (0,\frac{1}{2})$. 

According to \eqref{E16},
\[
\norm{u}_{L^\infty(B_r(x_0))} \le (L_1 + \tilde C) r,\qquad r\le \frac12,
\]
for every $x_0 \in \Gamma_{\mathrm{TP}} \cap B_{r_0}$. 
From this and the Lipschitz regularity around the one-phase points, Proposition \ref{pro:lip-one-phase}, we conclude that $u$ is Lipschitz in $B_{\frac{r_0}{2}}$; see \cite[Theorem 2.3]{MR752578} or \cite[Theorem 2.1]{MR1976085}. 

Next, for $x_0,y_0 \in \Gamma_{\mathrm{TP}} \cap B_{\frac{r_0}{2}}$ set $r:=|x_0-y_0|^{1-\eta}$ and $\eta:=\frac{\gamma}{1+\gamma}$, and recall that $u$ is Lipschitz (with a constant $\tilde L$) to get
$$
\begin{aligned}
& \left\| H_{\alpha(x_0), \textbf{e}(x_0)} - H_{\alpha(y_0), \textbf{e}(y_0)} \right\|_{L^{\infty}(B_1)} \\
&  \leq \left \|u_{x_0,r}-H_{\alpha(x_0), \textbf{e}(x_0)} \right\|_{L^{\infty}(B_1)} + \left\|u_{x_0,r}-u_{y_0,r} \right\|_{L^{\infty}(B_1)}+ \left\|u_{y_0,r}-H_{\alpha(y_0), \textbf{e}(y_0)} \right\|_{L^{\infty}(B_1)} \\
&  \leq \left(C_0r^{\gamma}+\frac{\tilde L}{r}|x_0-y_0|+C_0r^{\gamma} \right)=(\tilde L+2C_0)|x_0-y_0|^{\eta}.
\end{aligned}
$$
The conclusion now follows easily from this inequality; see e.g. \cite[Lemma 8.8]{velichkov2019regularity} for the details.
\end{proof}

\begin{lemma}
\label{L11}
Under the same assumptions of Lemma \ref{L10}, there are $C^{0,\eta}$ continuous functions $\alpha: \partial \Omega^+_u \to \mathbb{R}$ and $\beta: \partial \Omega^-_u \to \mathbb{R}$ such that $\alpha \geq \lambda_{+}$, $\beta \geq \lambda_{-}$ and $u^{\pm}$ are viscosity solutions of the one-phase problems
$$ \Delta_p u^+=0 \qquad \mathrm{in} \quad \Omega^+_u, \qquad |\nabla u^+|=\alpha \qquad \mathrm{on} \quad \partial \Omega^+_u, $$
and
 $$ \Delta_p u^-=0 \qquad \mathrm{in} \quad \Omega^-_u, \qquad |\nabla u^-|=\beta \qquad \mathrm{on} \quad \partial \Omega^-_u. $$
\end{lemma}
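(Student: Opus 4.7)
The plan is to \emph{define} $\alpha$ and $\beta$ pointwise via blow-up limits, and then prove their regularity and the viscosity condition in sequence. At a one-phase point $x_0 \in \Gamma^+_{\mathrm{OP}}$, by Proposition \ref{pro:lip-one-phase} together with the classical one-phase flat-free-boundary theory, blow-ups are unique and equal to $\lambda_+(x \cdot \textbf{e}(x_0))^+$; I therefore set $\alpha(x_0):=\lambda_+$. At a two-phase point $x_0 \in \Gamma_{\mathrm{TP}}$, Lemma \ref{L10} supplies a unique blow-up $H_{\alpha(x_0),\textbf{e}(x_0)}$, and I use that same $\alpha(x_0)$; by \eqref{E0-Con} it automatically satisfies $\alpha(x_0) \geq \lambda_+$. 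The companion $\beta(x_0)$ on $\partial\Omega^-_u$ is defined symmetrically and satisfies $\beta(x_0) \geq \lambda_-$.

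Next I would establish the $C^{0,\eta}$ regularity of $\alpha$ (the argument for $\beta$ is identical). Within $\Gamma_{\mathrm{TP}} \cap B_{r_0/2}$ the bound is already \eqref{E15}. Within $\Gamma^+_{\mathrm{OP}}$ the function $\alpha$ is constantly $\lambda_+$. The delicate case is a pair $x_0 \in \Gamma_{\mathrm{TP}}$, $y_0 \in \Gamma^+_{\mathrm{OP}}$. The key observation will be a dichotomy: if $\alpha(x_0) \geq \lambda_+ + \delta$, then by \eqref{E0-Con} the two-plane blow-up at $x_0$ has $\beta(x_0)$ bounded below by some $\delta'(\delta) > 0$, and the flatness estimate \eqref{E16} combined with non-degeneracy of $u^-$ (Proposition \ref{P1.5}) forces $F(u) \cap B_{\rho(\delta)}(x_0) \subset \Gamma_{\mathrm{TP}}$. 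Contrapositively, whenever $y_0 \in \Gamma^+_{\mathrm{OP}} \cap B_r(x_0)$ one obtains $\alpha(x_0) - \lambda_+ \leq C r^{\eta}$, which is exactly the needed Hölder bound for $|\alpha(x_0)-\alpha(y_0)|$.

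For the viscosity condition $|\nabla u^+| = \alpha$ on $\partial\Omega^+_u$, interior $p$-harmonicity comes from Theorem \ref{EoM}, and Lemma \ref{L3}(ii) already handles touching from below at $\Gamma^+_{\mathrm{OP}}$. At every other contact point I would argue by blow-up using the uniqueness established above. If $Q$ touches $u^+$ from above at $x_0 \in \partial\Omega^+_u$ with $\nabla Q(x_0) \neq 0$, then rescaling $u^+_r(x) := u^+(x_0+rx)/r$ and $Q^+_r(x) := Q^+(x_0+rx)/r$ yields in the limit the pointwise inequality $|\nabla Q(x_0)|(\tilde{\textbf{e}} \cdot x)^+ \geq \alpha(x_0)(x \cdot \textbf{e}(x_0))^+$; testing at $\pm \textbf{e}(x_0)$ and on the hyperplane perpendicular to $\textbf{e}(x_0)$ forces $\tilde{\textbf{e}} = \textbf{e}(x_0)$ and $|\nabla Q^+(x_0)| \geq \alpha(x_0)$. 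Touching from below at $x_0 \in \Gamma_{\mathrm{TP}}$ is symmetric and yields the reverse inequality $|\nabla Q^+(x_0)| \leq \alpha(x_0)$. The analogous statements for $u^-$ and $\beta$ follow by the same reasoning.

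The main obstacle will be the quantitative cross-continuity in the second step. Mere continuity of $\alpha$ across the interface between $\Gamma_{\mathrm{TP}}$ and $\Gamma^+_{\mathrm{OP}}$ is essentially free from uniqueness of blow-ups and compactness, but promoting this to a \emph{Hölder} modulus requires carefully quantifying how rapidly the negative phase must "detach" from the positive one near a point where $\alpha > \lambda_+$. This is done by iterating the flatness decay \eqref{E16} at the scale $r = |x_0 - y_0|$ together with non-degeneracy at that scale, producing a quantitative version of the dichotomy described above.
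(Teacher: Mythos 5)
Your overall scheme matches the paper's: define $\alpha$ and $\beta$ pointwise through the unique blow-up limits from Lemma \ref{L10}, then establish the H\"older modulus, then verify the viscosity condition via blow-up comparison. The divergence, and the weak point, is precisely in the step you yourself flag as ``the main obstacle'': H\"older continuity of $\alpha$ across the interface between $\Gamma_{\mathrm{TP}}$ and $\Gamma^+_{\mathrm{OP}}$.

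Your proposed dichotomy has a genuine gap. You claim that if $\alpha(x_0)\geq\lambda_++\delta$ then the flatness \eqref{E16} together with non-degeneracy (Proposition \ref{P1.5}) forces $F(u)\cap B_{\rho(\delta)}(x_0)\subset\Gamma_{\mathrm{TP}}$. Flatness at $x_0$ only traps the free boundary into a slab of thickness $O(\rho^{1+\gamma})$ around the hyperplane $\{(x-x_0)\cdot\textbf{e}(x_0)=0\}$ and guarantees that $u^-$ is nontrivial at scale $\rho$; it does not preclude a point $z\in\partial\Omega^+_u\cap B_\rho(x_0)$ from satisfying $B_s(z)\cap\Omega^-_u=\emptyset$ for some small $s$, i.e.\ from being a one-phase point. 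For such a $z$ one has $z\notin\overline{\Omega^-_u}$, so Proposition \ref{P1.5} applied to $u^-$ at $z$ is simply unavailable, and the estimate $\operatorname{dist}(z,\Omega^-_u)\lesssim\rho^{1+\gamma}$ that flatness supplies is not the needed $\operatorname{dist}(z,\Omega^-_u)=0$. One also cannot invoke $C^{1,\eta}$ regularity of $\partial\Omega^\pm_u$ to rule this out, since Theorem \ref{T1} is precisely what Lemma \ref{L11} is used to prove.

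The paper handles this step by a softer compactness argument at the junction, which you should compare against your route. It suffices, given the H\"older estimate \eqref{E15} on $\Gamma_{\mathrm{TP}}$ and the constancy of $\alpha$ on $\Gamma^+_{\mathrm{OP}}$, to show that $\alpha(x_0)=\lambda_+$ whenever $x_0\in\Gamma_{\mathrm{TP}}$ is the limit of a sequence $x_k\in\Gamma^+_{\mathrm{OP}}$. To do this one takes $y_k\in\Gamma_{\mathrm{TP}}$ nearest to $x_k$, sets $r_k=|x_k-y_k|$, and rescales $u_k(x)=\frac{1}{r_k}u^+(x_k+r_kx)$. Since $B_{r_k}(x_k)$ contains no two-phase points, $u_k$ is a viscosity solution of the one-phase Bernoulli problem with constant $\lambda_+$ in $B_1$; by Proposition \ref{pro:lip-one-phase} the $u_k$ are uniformly Lipschitz, and by \eqref{E17} at $y_k$ they converge to $u_\infty(x)=\alpha(x_0)\left((x-z_0)\cdot\textbf{e}(x_0)\right)^+$ for some $z_0\in\partial B_1$. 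Since $u_\infty$ is also a viscosity solution of the same one-phase problem with slope $\lambda_+$, the conclusion $\alpha(x_0)=\lambda_+$ is immediate. This avoids any quantitative ``detachment'' estimate for the negative phase.

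Your treatment of the viscosity condition (blow up both $u^+$ and the comparison function $Q^+$, compare the limiting one-plane profiles) is sound in spirit and is implicitly what the paper does through \eqref{E17} (which gives classical differentiability of $u^+$ at $\Gamma_{\mathrm{TP}}$ points) together with Lemma \ref{L2} on $\Gamma^+_{\mathrm{OP}}$. You should be careful, though, that uniqueness of blow-ups at $\Gamma^+_{\mathrm{OP}}$ points is not supplied by Lemma \ref{L10}; the paper sidesteps this by phrasing the condition at one-phase points directly in the viscosity sense of Lemma \ref{L2} rather than through a blow-up limit.
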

\begin{proof}
We will sketch the argument for $u^+$. The proof of the case $u^-$ is similar. Clearly $\Delta_p u^+ =0$ in $\Omega^+_u$. By \eqref{E16} we have that, if $x_0 \in \Gamma_{\mathrm{TP}}\cap D'$, then
\begin{equation}
\label{E17}
\left|u^+(x)-\alpha(x_0)\left((x-x_0) \cdot \textbf{e}(x_0)\right)^+\right| \leq C_0 |x-x_0|^{1+\gamma},
\end{equation}
for every  $x \in B_{r_0}(x_0) \cap \Omega^+_u$
where $r_0$ and $C_0$ depends only on $D'$. In particular, $u^+$ is differentiable on $\Omega^+_u$ up to $x_0$ (in the classical sense) and $|\nabla u^+(x_0)|=\alpha(x_0)$. On the other hand if $x_0\in \Gamma^+_{\mathrm{OP}}$, then $|\nabla u^+(x_0)|=\lambda_{+}$ is constant, in the viscosity sense.

To close the argument, we only need to prove that $\alpha \in C^{0,\eta}(\partial \Omega^+_u)$. Since $\alpha$ is $\eta$-H\"older continuous on $\Gamma_{\mathrm{TP}}$ by Lemma \ref{L10}, and constant on $\Gamma^+_{\mathrm{OP}}$ (in the viscosity sense), we just need to show that if $x_0 \in \Gamma_{\mathrm{TP}}$ is such that there is a sequence $x_k \in \Gamma^+_{\mathrm{OP}}$ converging to $x_0$, then $\alpha(x_0)=\lambda_{+}$. To this end, let $y_k \in \Gamma_{\mathrm{TP}}$ be such that 
$$ \mathrm{dist}(x_k,\Gamma_{\mathrm{TP}})=|x_k-y_k|, $$
and denote
$$ r_k=|x_k-y_k|, \qquad \text{and} \qquad u_k(x)=\frac{1}{r_k}u^+(x_k+r_kx), $$
and note that $u_k$ is a viscosity solution of the free boundary problem
$$ \Delta_p u_k = 0 \qquad \text{in} \quad \Omega_{u_k}^+ \cap B_1, \qquad |\nabla u_k|=\lambda_{+} \qquad \text{on} \quad \partial\{u_k>0\} \cap B_1. $$
Since $u_k$ are uniformly Lipschitz in $B_{\frac{1}{2}}$ (Proposition \ref{pro:lip-one-phase}) they converge to a function $u_{\infty}$ which is also a viscosity solution of the same problem (see e.g. \cite{MR4273843}). 
On the other hand, by \eqref{E17} for two-phase point $y_k \in \Gamma_{\mathrm{TP}}$ and letting $z_k:=\frac{x_k-y_k}{r_k}$, we have that 
\begin{equation*}
\left|u_k(x)-\alpha(y_k)\left((x-z_k) \cdot \textbf{e}(y_k)\right)^+\right| \leq C_0r_k^\gamma |x-z_k|^{1+\gamma}.
\end{equation*}
Suppose $z_k\to z_0$  and passing to the limit
$$ u_{\infty}(x)=\alpha(x_0) \left((x-z_0) \cdot \textbf{e}(x_0)\right)^+, \qquad \text{in} \quad B_{\frac{1}{2}},$$
which gives that $\alpha(x_0)=|\nabla u_\infty(0)|= \lambda_{+}$.
\end{proof}

\begin{proof}[Proof of Theorem \normalfont{\ref{T1}}]
Let $0 \in \Gamma_{\mathrm{TP}}=\partial\Omega^+_u \cap \partial\Omega^-_u$ and let $\overline{\epsilon}$ be the constant satisfies in \cite[Theorem 1.1]{MR4273843} and Lemma \ref{L10}. 
By virtue of Lemma \ref{L11}, we can apply \cite[Theorem 1.1]{MR4273843} to conclude that locally at $0 \in \Gamma_{\mathrm{TP}}$ the free boundaries $\partial \Omega^{\pm}_u$ are $C^{1,\eta}$ graphs. 
\end{proof}


\section{Lipschitz regularity of solutions}

In this section, we are going to prove Theorem \ref{P1}. We will follow the idea in \cite{MR3938321}.
\begin{proposition}\label{prop:dichotomy}
Let $u:D \to \mathbb{R}$ be a minimizer of $J_{\mathrm{TP}}$ that   $0 \in F(u) \cap B_1 \subset D$. 
Then there exists constants  $L$ and $\delta$ such that one of the following alternative holds:
\begin{enumerate}
\item $u$ is Lipschitz in $B_\delta$ and 
\[
|\nabla u| \le C\max(\norm{u}_{L^\infty(B_1)}, L), \qquad \text{in} \quad B_\delta,
\]
for some universal constant $C$.

\item  $\frac 1\delta \norm{u}_{L^\infty(B_\delta)} \le \frac12 \max(\norm{u}_{L^\infty(B_1)}, L)$.
\end{enumerate}
\end{proposition}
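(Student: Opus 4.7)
My plan is to argue by contradiction, using a blow-up/compactness argument to reduce to the flat regime covered by Theorem \ref{T1}. Suppose the dichotomy fails. Then, negating the statement, for each universal choice of $L, \delta, C$ there exists a counterexample, so one obtains a sequence $L_j \to \infty$ and minimizers $u_j$ of $J_{\mathrm{TP}}$ with $0 \in F(u_j)$ for which, setting $S_j := \max(\|u_j\|_{L^\infty(B_1)}, L_j)$, both alternatives fail: the non-decay condition $\|u_j\|_{L^\infty(B_\delta)} > \tfrac{\delta}{2} S_j$ holds, and simultaneously there are points in $B_\delta$ where the Lipschitz ratio of $u_j$ exceeds $C S_j$.

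I would first normalize by setting $v_j(x) := u_j(x)/S_j$ on $B_1$. Then $v_j(0)=0$, $|v_j|\le 1$, and $v_j$ is a minimizer of $\hat{J}_{\mathrm{TP}}$ with coefficient $\sigma_j = 1/S_j \to 0$. Proposition \ref{LConverg} applied with $x_j=0$, $r_j=1$ yields (up to a subsequence) uniform convergence on $\overline{B_{1/2}}$ to a $p$-harmonic limit $v_\infty$ with $v_\infty(0)=0$ and $|v_\infty|\le 1$. Interior $C^{1,\alpha}$ regularity for $p$-harmonic functions gives $v_\infty(x) = \mathbf{a}\cdot x + O(|x|^{1+\alpha})$ with $\mathbf{a} := \nabla v_\infty(0)$ bounded by a universal $C_0$. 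The failure of (2) passes to the limit: $\|v_\infty\|_{L^\infty(B_\delta)} \ge \delta/4$ for $j$ large, and combined with the expansion this forces $|\mathbf{a}| \ge 1/4 - C_0 \delta^\alpha$. Choosing $\delta$ universally small yields $|\mathbf{a}| \ge 1/8$.

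With this quantitative linearity I would fix a small universal scale $r$ with $C_0 r^\alpha$ well below the flatness threshold $\bar\epsilon$ of Theorem \ref{T1} (taken with $\Lambda_0 = 1/16$, $\Lambda_1 = 2C_0$; crucially $\bar\epsilon$ is independent of $\lambda_\pm$ by the remark after Theorem \ref{T1}). Then, for $j$ large, $\|v_j - \mathbf{a}\cdot x\|_{L^\infty(B_r)} \le \bar\epsilon r/2$. Rescaling $w_j(x):=v_j(rx)/r$ produces a minimizer of $\hat{J}_{\mathrm{TP}}$ with coefficients $\lambda_\pm/S_j$ and $\|w_j - \mathbf{a}\cdot x\|_{L^\infty(B_1)} \le \bar\epsilon/2$. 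The affine function $\mathbf{a}\cdot x$ differs in $B_1$ by $O(S_j^{-(p-1)})$ from the genuine two-plane $H_{\alpha_j,\mathbf{e}}$ of the rescaled problem, where $\alpha_j = |\mathbf{a}|$, $\mathbf{e} = \mathbf{a}/|\mathbf{a}|$ and $\beta_j^p = |\mathbf{a}|^p + (\lambda_-/S_j)^p - (\lambda_+/S_j)^p$ (satisfying \eqref{E0-Con}). Hence $w_j$ is genuinely $\bar\epsilon$-flat with $\alpha_j \in [\Lambda_0, \Lambda_1]$ for $j$ large, and Theorem \ref{T1} yields a universal $r_0$ so that $\partial\Omega^\pm_{w_j}$ is $C^{1,\eta}$ in $B_{r_0}$. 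The one-phase regularity of \cite{MR4273843} then gives $\|\nabla w_j\|_{L^\infty(B_{r_0/2})} \le C'$ universally, and unfolding the rescaling $\nabla w_j = S_j^{-1}\nabla u_j(r\,\cdot)$ yields $\|\nabla u_j\|_{L^\infty(B_{r r_0/2})} \le C' S_j$. Taking the universal $\delta < r r_0/2$ and $C > C'$ contradicts the failure of (1).

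The principal difficulty will be the quantitative extraction in the middle step: the non-decay of (2) is only a qualitative statement, but I must turn it into a uniform-in-$j$ lower bound on $|\nabla v_\infty(0)|$ in order to place $w_j$ inside the flat regime of Theorem \ref{T1}. The remaining ingredients---compactness via Proposition \ref{LConverg}, the perturbation of the affine approximation into an exact two-plane of the rescaled problem (which works precisely because $\sigma_j \to 0$), and the translation of the Lipschitz bound from $w_j$ back to $u_j$---are then essentially bookkeeping.
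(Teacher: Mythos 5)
Your proof is correct and follows essentially the same route as the paper's: contradiction with $L_j\to\infty$, normalization by $S_j=\max(\|u_j\|_{L^\infty(B_1)},L_j)$, compactness via Proposition~\ref{LConverg} with $\sigma_j=1/S_j\to 0$, $C^{1,\alpha}$ expansion of the $p$-harmonic limit, and invocation of Theorem~\ref{T1} in the flat regime. The only organizational difference is that the paper runs a two-case dichotomy on $|\nabla u_0(0)|$ (Case~I: gradient small $\Rightarrow$ contradicts failure of~(2); Case~II: gradient large $\Rightarrow$ contradicts failure of~(1)) whereas you fold Case~I into a preliminary lower bound $|\mathbf a|\geq 1/8$ extracted from the failure of~(2) and then proceed directly to the flatness step; that is a cosmetic reshuffling of the same logic.
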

\begin{proof}
Let $\delta$ be fixed, to be specified later. 
Assume by contradiction that there exist a sequence of $L_j\to\infty$ and a sequence of solutions $u_j$ such that does not satisfy either $(1)$ nor $(2)$.
Let $C_j:=\max(\norm{u_j}_{L^\infty(B_1)}, L_j)$  and define
\[
\tilde u_j:= \frac{u_j}{C_j},
\]
which satisfy
\[
\norm{\tilde u_j}_{L^\infty(B_1)} \le 1, \qquad \text{and}\qquad \norm{\tilde u_j}_{L^\infty(B_\delta)} \ge \frac\delta2.
\]
By the same proof of Proposition \ref{LConverg}, we get that $\tilde u_j$ is a minimizer of the scaled functional \eqref{LEq17} for $\sigma_j = \frac{1}{C_j} \to 0$. 
Thus up to a subsequence, $\tilde u_j$ converges uniformly to a $p$-harmonic function $u_0$. 
Hence by $C^{1,\alpha}$ regularity for $p$-harmonic functions we get that
\begin{equation}\label{Lip:Eq1}
\sup_{B_r}\left|u_0(x)-\nabla u_0(0)\cdot x\right| \le \tilde Cr^{1+\alpha}, \qquad \text{ for all }r\le 1,
\end{equation}
where the constant $C$ is universal and also $|\nabla u_0(0)| \le \tilde C$.
Now we distinguish two cases:

\noindent{\bf Case I:} $|\nabla u_0(0)| \le \frac14$. \\
In this case, from \eqref{Lip:Eq1} we deduce that
\[
\frac1\delta \norm{u_0}_{L^\infty(B_\delta)} \le \frac1 4 + \tilde C\delta^{\alpha} \le \frac13,
\]
if we choose $\delta$ small enough. Thus all $u_j$ for sufficiently   large $j$ will satisfy $(2)$, which is a contradiction.

\noindent{\bf Case II:} $|\nabla u_0(0)| \ge \frac14$. \\
In this case we will use our flatness result in Theorem  \ref{T1}. 
Put $\tilde r=\frac{2\delta}{r_0}$ in \eqref{Lip:Eq1} where $r_0$ is the radius obtained in Theorem \ref{T1} (we have also assumed $2\delta \le r_0$)
\[
\sup_{B_{1}}\left|\tilde u_{j,\tilde r}(x)-\nabla u_0(0)\cdot x\right| \le \frac{\tilde C}{r_0^{\alpha}}\delta^{\alpha}.
\]
Now, let $\textbf{e} = \frac{\nabla u_0(0)}{|\nabla u_0(0)|}$, $\alpha = |\nabla u(0)|$ and $\beta_j= \frac1{C_j} \left(\lambda_-^p -\lambda_+ ^p + \alpha^p C_j^p \right)^{\frac{1}{p}}$, then
\[
\left\| u_{j,\tilde r} - H_{\alpha, \textbf{e}} \right\|_{L^\infty(B_1)} \le  \left\|u_{j,\tilde r} - \nabla u_0(0)\cdot x \right\|_{L^\infty(B_1)} + |\alpha-\beta_j| \le \frac{2\tilde C}{r_0^{\alpha}}\delta^{\alpha},
\]
for sufficiently large $j$.
Applying Theorem \normalfont{\ref{T1}} for some $\Lambda_0 \le \frac14 $ and $  \Lambda_1 \ge \tilde C$ and notice that $u_{j,\tilde r}$ is a minimizer of $J_{\mathrm{TP}}$ for coefficients $\frac1{C_j}\lambda_\pm$.
Note that the critical flatness in  Theorem \ref{T1} or Lemma \ref{L10} depends on $\Lambda_0$ and $\Lambda_1$  rather than coefficients $\lambda_\pm$. 
Then, we can find $\delta$ universally small such that  $u_{j,\tilde r}$ satisfy in Lemma \ref{L10}. In particular, $u_{j,\tilde r}$ is Lipschitz in $B_{\frac{r_0}{2}}$ with a universal constant. It proves that $u_j$ is Lipschitz in $B_\delta$.
\end{proof}

\begin{proof}[Proof of Theorem \normalfont{\ref{P1}}]
Let $\delta$, $C$ and $L$ be the universal constants in Proposition \ref{prop:dichotomy}.
Assume $0\in F(u)$ and let $\tilde L:= \max(\norm{u}_{L^\infty(B_1)}, L)$. We first show 
\begin{equation}\label{Lip:Eq2}
\norm{u}_{L^\infty(B_{\delta^k})} \le C\tilde L\delta^k, \qquad \forall k\ge 0.
\end{equation}
By Proposition  \ref{prop:dichotomy} either $(1)$ or $(2)$ holds. 
In the first case, $u$ is Lipschitz in $B_\delta$ and 
\[
|\nabla u|\le C\tilde L, \qquad \text{in} \quad B_\delta.
\]
Thus \eqref{Lip:Eq2} holds for all $k\ge 1$.

If $(2)$ holds, then 
\[
\norm{u}_{L^\infty(B_{\delta})} \le \frac{\tilde L}2\delta.
\]
We now rescale and iterate. Define 
\[
u_k(x):=\frac{u(\delta^k x)}{\delta^k},
\]
which is also a minimizer of  $J_{\mathrm{TP}}$ and we can apply Proposition \ref{prop:dichotomy}.
If $k_0$ is the smallest $k$ for which $u_k$ satisfies $(1)$, then for $0\le k < k_0$ the item $(2)$ holds and so
\[
\norm{u}_{L^\infty(B_{\delta^k})} \le \tilde L\delta^k, \qquad \text{for} \quad 0\le k \le k_0.
\]
Moreover, $u_{k_0}$ is Lipschitz in $B_\delta$, with
\[
|\nabla u_{k_0}|\le C \max(\norm{u_{k_0}}_{L^\infty(B_1)},L) \le C\max(\tilde L, L) = C\tilde L, \qquad \text{in} \quad B_\delta.
\]
Hence, \eqref{Lip:Eq2} holds for all $k\ge k_0$.
If  $u_k$ satisfy the alternative $(2)$ for all $k$,  the estimate  \eqref{Lip:Eq2}  will be obtained easily. 

Now for an arbitrary $r$ choose $k$ such that $\delta^{k+1} \le r \le \delta^k$, then by \eqref{Lip:Eq2} we get
\[
\norm{u}_{L^\infty(B_{r})} \le \norm{u}_{L^\infty(B_{\delta^k})} \le C\tilde L \delta^k \le \frac{C\tilde L}{\delta} r.
\]
This is enough to obtain the Lipschitz continuity locally in $D$.
\end{proof}


\appendix

\section{Proof of Proposition \ref{LConverg}  }\label{appendix}

\begin{proof}[Proof of Proposition \ref{LConverg}]
By definition of $v_j$ and an easy computation, we get
$$ \nabla v_j(x)=\frac{r_j}{S_j} \nabla u_j(x_j+r_jx)=\sigma_j  \nabla u_j(x_j+r_jx). $$
In order to show that $v_j$ is a minimizer of $\hat{J}_{\mathrm{TP}}$ in $B_R$, consider $w$ that
\[
\hat{J}_{\mathrm{TP}}(w, B_R) < \hat{J}_{\mathrm{TP}}(v_j, B_R), \qquad  \text{and} \qquad w=v \quad \text{on }\partial B_R.
\] 
Then $\hat w(x) = w(\frac{x-x_j}{r_j})$ will satisfy $\hat w = u_j$ on $\partial B_{r_j}(x_j)$ and
by a simple calculation, we get that 
\[
 J_{\mathrm{TP}}(\hat w, B_{r_j})=\frac{r_j^{n}}{\sigma_j^p}\hat{J}_{\mathrm{TP}}(w, B_R) < \frac{r_j^{n}}{\sigma_j^p}\hat{J}_{\mathrm{TP}}(v_j, B_R) =  \hat{J}_{\mathrm{TP}}(u_j, B_{r_j}).
\]
This is a contradiction with the minimality of $u_j$.

\medskip

Moreover, using $|v_j|\leq M$ in $B_{\frac{4R}{3}}$ and Caccioppoli's inequality, we conclude that
$$ \int_{B_{R}} |\nabla v^{\pm}_j|^p \, dx \leq 4^p C(n) \int_{B_{\frac{4R}{3}}} (v_j^{\pm})^p\, dx \leq (4M)^{p} C(n), $$
for some $C(n)>0$, indicating that $\|v_j\|_{W^{1,p}(B_R)}$ are uniformly bounded. Hence, from Proposition \ref{P0}, i.e. the BMO estimate for the gradient, we obtain that for any $q>1$ and $0<R<\frac{1}{r_j}$ there exists a constant $C=C(R,q)>0$ independent of $j$ such that
$$ \max \left\{ \|v_j\|_{C^{\alpha}(B_{R})}, \|\nabla v_j\|_{L^q(B_{R})} \right\} \leq C, $$
for some $\alpha \in (0,1)$ (if $q>n$, one can take $\alpha=1-\frac{n}{q}$ by the Morrey's inequality). Therefore, by a standard compactness argument, we have that, up to a subsequence,
$v_j$ converges to some function $v_0$ as $j \to + \infty$ in $C^{\alpha}(B_R)$
and weakly in $W^{1,q}(B_R)$ for any $q>1$, and for any fixed $R$.
This completes the proof of \textup{(i)}.

\medskip

For obtaining \textup{(ii)}, firstly, we prove that $ \Delta_p v_0= 0$ in the positivity set of $v_0$. Let $E \Subset \{v_0>0\}$. Then, there exists $c>0$ such that $v_0 \geq 2c$ in $E$. By the uniform convergence of $v_j$ to $v_0$, we will have $v_j > c$ in $E$ for large $j$. This implies that $v_0$ is $p$-harmonic in $E$. Since $E$ was arbitrary, we are done. Now, take $0 \leq \varphi \in C_c^1(\mathbb{R}^n)$ and $s>0$. By using $(v_0-s)^+ \varphi$ as a test function in the weak formulation of $ \Delta_p v_0= 0$ in the set $\{v_0>0\}$, we have
$$ \int_{\{v_0>s\}} |\nabla v_0|^p \varphi \, dx = - \int_{\{v_0>s\}} |\nabla v_0|^{p-2} \left( \nabla v_0 \cdot \nabla \varphi \right) v_0 \, dx + s \int_{\{v_0>s\}} |\nabla v_0|^{p-2} \nabla v_0 \cdot \nabla \varphi \, dx. $$
Letting $s \to 0$ gives that
\[
\int_{\{v_0>0\}} |\nabla v_0|^p \varphi \, dx = - \int_{\{v_0>0\}} |\nabla v_0|^{p-2} \left( \nabla v_0 \cdot \nabla \varphi \right) v_0 \, dx.
\]
Similar argument holds for using test function $(v_0+s)^- \varphi$ and finally we get
\begin{equation}
\label{MAup-c1}
\int_{\mathbb{R}^n} |\nabla v_0|^p \varphi \, dx = - \int_{\mathbb{R}^n} |\nabla v_0|^{p-2} \left( \nabla v_0 \cdot \nabla \varphi \right) v_0 \, dx.
\end{equation}
On the other hand since $v_j \varphi \Delta_p v_j \geq 0$ (Theorem \ref{EoM}), we have
\begin{equation}
\label{MAup-c2}
\int_{\mathbb{R}^n} |\nabla v_j|^p \varphi \, dx \leq - \int_{\mathbb{R}^n} |\nabla v_j|^{p-2} \left(\nabla v_j \cdot \nabla \varphi \right) v_j\, dx.
\end{equation}
Using the uniform convergence of $v_j$ to $v_0$ and the weak convergence of $|\nabla v_j|^{p-2} \nabla v_j \rightharpoonup |\nabla v_0|^{p-2} \nabla v_0$ in $L_{\mathrm{loc}}^{\frac{p}{p-1}}(\mathbb{R}^n)$ (see \cite{MR1246185}), we infer from \eqref{MAup-c1} and \eqref{MAup-c2} that
\begin{equation}
\label{MAup-c3}
\limsup_{j \to + \infty} \int_{\mathbb{R}^n} |\nabla v_j|^p \varphi \, dx \leq \int_{\mathbb{R}^n} |\nabla v_0|^p \varphi \, dx.
\end{equation}
Since also $ \nabla v_j \rightharpoonup \nabla v_0$ weakly in $L_{\mathrm{loc}}^{p}(\mathbb{R}^n)$, we have
\begin{equation}
\label{MAup-c4}
\int_{\mathbb{R}^n} |\nabla v_0|^p \varphi \, dx \leq \liminf_{j \to + \infty} \int_{\mathbb{R}^n} |\nabla v_j|^p \varphi \, dx.
\end{equation}
It follows from \eqref{MAup-c3}, \eqref{MAup-c4}, and a simple compactness argument that 
\begin{equation}
\label{STRONG}
|\nabla v_j|^p \to |\nabla v_0|^p, \qquad \text{strongly in \,\,\, $L_{\mathrm{loc}}^1(\mathbb{R}^n)$},
\end{equation}
and we get \textup{(ii)}. 

\medskip

Finally, we prove the claim \textup{(iii)}. For this, notice that for any $\psi \in C^{\infty}_c(B_R)$
\begin{equation}
\label{LEq20}
\begin{aligned}
& \int_{B_R} |\nabla v_j|^p  + \sigma_j^p  (p-1) \lambda_+^p \chi_{\{v_j>0\}} + \sigma_j^p  (p-1) \lambda_-^p \chi_{\{v_j<0\}} \, dx \\
& \ \  \leq \int_{B_R}|\nabla (v_j+\psi )|^p + \sigma_j^p  (p-1) \lambda_+^p \chi_{\{v_j+\psi >0\}} + \sigma_j^p  (p-1) \lambda_-^p \chi_{\{v_j+\psi <0\}} \, dx,
\end{aligned}
\end{equation}
because $v_j$ is a minimizer for $\hat{J}_{\mathrm{TP}}$ defined in \eqref{LEq17}. 
Recall the strong convergence \eqref{STRONG} along with the following standard inequality  
$$ |\nabla (v_j+\psi )|^p \leq 2^{p-1} \left( |\nabla v_j|^p + |\nabla \psi|^p \right) ,  $$
we get 
$$ \int_{B_R} |\nabla (v_j+\psi )|^p \, dx\to \int_{B_R} |\nabla (v_0+\psi )|^p\, dx, $$
as $j \to + \infty$. 
Thus passing \eqref{LEq20} to limit, we have 
$$
\begin{aligned}
 \int_{B_R}& |\nabla v_0|^p + \sigma^p  (p-1) \lambda_+^p \chi_{\{v_0>0\}} + \sigma^p  (p-1) \lambda_-^p \chi_{\{v_0<0\}} \, dx \\
& \leq  \int_{B_R} |\nabla(v_0+\psi )|^p + \sigma^p  (p-1) \lambda_+^p \chi_{\{v_0+\psi>0\}} + \sigma^p  (p-1) \lambda_-^p \chi_{\{v_0+\psi<0\}} \, dx, 
 \end{aligned}
 $$
for any $\psi  \in C^{\infty}_c(B_R)$. This implies \textup{(iii)} and the proof of proposition finishes.
\end{proof}

\section{Proof of the regularity for two-membrane problem} \label{appendix:auxilarly}

\begin{proof}[Proof of  Lemma \ref{LL2}]
For the given solution $v$, define $w$
$$ w_{\pm}(x)=v_{\pm}(x', (p-1)^{\frac{1}{2}}x_n)+\frac{\ell(p-1)^{\frac{1}{2}}}{\lambda_{\pm}^p}x_n, \qquad x \in B^{\pm}_{2r_p}. $$
It is straightforward to check that $w_{\pm}$ is a viscosity solution of
\begin{equation*}
\begin{cases}
\mathcal{L}_p(w_{\pm})= 0, \qquad & \mathrm{in} \quad B^{\pm}_{2r_p}, \\
\partial_n w_{\pm} \geq 0,  \qquad & \mathrm{in} \quad B_{2r_p} \cap \{x_n=0\}, \\
\partial_n w_{\pm} = 0,  \qquad & \mathrm{in} \quad \mathcal{J}=\{w_+<w_-\}\cap\{x_n=0\}, \\
\lambda_{+}^{p} \partial_n w_+ = \lambda_{-}^{p} \partial_n w_-, \qquad & \mathrm{in} \quad \mathcal{C}=\{w_+=w_-\}\cap \{x_n=0\}, \\
w_+ \leq w_-, \quad & \mathrm{in} \quad B_{2r_p} \cap \{x_n=0\}. \\
\end{cases}
\end{equation*}
Furthermore one can easily check that
\begin{equation} \label{E13-Com}
w_{\pm}(x',x_n)= \tilde w(x',\mp x_n) \mp \frac{1}{\lambda_{\pm}^p} w_S(x',\mp x_n), 
\end{equation}
where $\tilde w$ solves the following Neumann problem
$$
\begin{cases}
\Delta \tilde w  = 0, \qquad & \mathrm{on} \quad B^{-}_{2r_p}, \\
\partial_n \tilde w = 0,  \qquad & \mathrm{on} \quad B^{-}_{2r_p} \cap \{x_n=0\}, 
\end{cases}
$$
and $w_S$ is a solution to the thin obstacle (the Signorini) problem
$$
\begin{cases}
\Delta w_{S} = 0, \qquad & \mathrm{on} \quad B^{-}_{2r_p}, \\
w_{S} \geq 0,  \qquad & \mathrm{on} \quad B^{-}_{2r_p} \cap \{x_n=0\},\\
\partial_n w_{S} \geq 0,  \qquad & \mathrm{on} \quad B^{-}_{2r_p} \cap \{x_n=0\},\\
w_S \partial_n w_{S} = 0,  \qquad & \mathrm{on} \quad B^{-}_{2r_p} \cap \{x_n=0\}.
\end{cases}
$$
The boundary data of $\tilde w$ and $w_S$ on $\partial B_{2r_p}\cap \{x_n<0\}$ will be obtained uniquely from  \eqref{E13-Com}.
Clearly $\tilde w \in C^{\infty}(\overline{B^-_{r_p}})$ with
$$ \|\tilde w\|_{C^k(B_{r_p}^-)} \leq C_k \| \tilde w \|_{L^{\infty}(B_{2r_p}^-)}. $$
On the other hand, by \cite{MR2120184}, $w_S \in C^{1,\frac{1}{2}}(\overline{B^-_{r_p}})$ with
$$ \|w_S\|_{C^{1,\frac{1}{2}}(B_{r_p}^-)} \leq C \| w_{S} \|_{L^{\infty}(B_{2r_p}^-)}. $$
From the last two estimates and the definition of $w$, it is easy to deduce the conclusion of the lemma for (note that the positivity of $w_S$ along with its regularity necessitates that $\nabla'  w_S(0) =0$)
\[
{\bf  v} := \nabla' \tilde w(0)   \qquad \text{and} \qquad 
s^\pm :=  \frac{(p-1)^{-\frac{1}{2}}}{\lambda_{\pm}^p} \partial_nw_S(0)  - \frac{\ell}{\lambda_{\pm}^p}.
\]
\end{proof}

\section{Proof of non-degeneracy}\label{appendix:nondegeneracy}

\begin{proof}[Proof of Proposition \ref{P1.5}]
We will prove that for any $k \in (0,1)$, there exists a constant $c_k > 0$ such that for any local minimizer of $J_{\mathrm{TP}}$ and for any small ball $B_r(x_0) \subset D$
$$ \mathrm{if} \quad \frac{1}{r} \left( \avint_{B_r(x_0)} (u^{\pm})^p \, dx \right)^{\frac{1}{p}} < c_k \quad \text{then $u^\pm \equiv 0$ in $B_{kr}(x_0)$}.$$
By symmetry of the problem, we prove only the case $u^+$. Also, by the scale invariance, we can take $r=1$ and $x_0=0$ for simplicity. Now, let define
\begin{equation*}
\varepsilon:=\frac{1}{\sqrt{k}} \sup_{B_{\sqrt{k}}} u^+.    
\end{equation*}
Since $u^+$ is $p$-subharmonic, then by \cite[Theorem 3.9]{MR1461542}
$$ \varepsilon \leq \frac{1}{\sqrt{k}} \frac{C(n,p)}{(1-\sqrt{k})^{\frac{n}{p}}} \left( \avint_{B_1} (u^{+})^p \, dx \right)^{\frac{1}{p}}.$$
Also, let
$$
v(x):=\begin{cases}
C_1 \varepsilon \left( e^{-\mu |x|^2}-e^{-\mu k^2} \right), \quad & \mathrm{in} \quad B_{\sqrt{k}}\setminus B_k, \\
0, \quad & \mathrm{in} \quad B_k,
\end{cases}
$$
where $\mu>0$ and $C_1$ are such that  
\begin{equation}
\label{Non-D2}
v_{\big|_{\partial B_{\sqrt{k}}}}:=\sqrt{k} \varepsilon = \sup_{B_{\sqrt{k}}} u^+ \geq u_{\big|_{\partial B_{\sqrt{k}}}}.
\end{equation}
By direct computation, it is straightforward to check that
\begin{equation*}
\nabla v (x) = - 2C_1 \varepsilon  \mu x e^{-\mu |x|^2} \qquad \mathrm{in} \quad B_{\sqrt{k}}\setminus B_k,
\end{equation*}
and
$$
\Delta_pv(x)=C_1 \varepsilon (p-1) (2\mu)^2 |\nabla v|^{p-2} e^{-\mu |x|^2} \left( |x|^2 - \frac{n+p-2}{2\mu (p-1)} \right).
$$
Thus $v$ is nonnegative $p$-superharmonic in $B_{\sqrt{k}}\setminus B_k$,
if $\mu$ is sufficiently small, say,
\begin{equation}
\label{Non-D5}
\mu < \frac{n+p-2}{2k(p-1)}.
\end{equation}
On the other hand, since 
\begin{equation*}
w:=\min(u, v) = u \qquad \text{on} \quad \partial B_{\sqrt{k}},
\end{equation*}
thanks to \eqref{Non-D2}, by invoking the minimality of $u$, we get
\begin{equation}
\label{Non-D6}
J_{\mathrm{TP}}(u, B_{\sqrt{k}}) \leq J_{\mathrm{TP}}(w, B_{\sqrt{k}}).
\end{equation}
Now, since we have
$$
\begin{aligned}
J_{\mathrm{TP}}(w, B_{\sqrt{k}})=&\int_{B_k} |\nabla w|^p
 +(p-1)\lambda_+^p \chi_{\{w>0\}}+(p-1)\lambda_-^p \chi_{\{w<0\}} \, dx \\
& +  \int_{B_{\sqrt{k}} \setminus B_k} |\nabla w|^p 
 +(p-1)\lambda_+^p \chi_{\{w>0\}}+(p-1)\lambda_-^p \chi_{\{w<0\}} \, dx \\
=&  \int_{B_k \cap \{u \le 0 \}} |\nabla u|^p 
 +(p-1)\lambda_+^p \chi_{\{u>0\}}+(p-1)\lambda_-^p \chi_{\{u<0\}} \, dx \\
& + \int_{B_{\sqrt{k}} \setminus B_k} |\nabla w|^p
 +(p-1)\lambda_+^p \chi_{\{w>0\}}+(p-1)\lambda_-^p \chi_{\{w<0\}} \, dx.
\end{aligned}
$$
Therefore, from \eqref{Non-D6}, $\{w>0\}\subseteq\{u>0\}$ and $\{w<0\}=\{u<0\}$, we have that
$$
\begin{aligned}
& \int_{B_k \cap \{u > 0 \}}   |\nabla u|^p+ (p-1)\lambda_+^p \chi_{\{u>0\}}+(p-1)\lambda_-^p \chi_{\{u<0\}} \, dx \\
& \qquad \qquad \leq \int_{B_{\sqrt{k}} \setminus B_k} |\nabla w|^p 
+(p-1)\lambda_+^p \chi_{\{w>0\}}+(p-1)\lambda_-^p \chi_{\{w<0\}} \, dx \\
& \qquad \qquad  - \int_{B_{\sqrt{k}} \setminus B_k} |\nabla u|^p+(p-1)\lambda_+^p \chi_{\{u>0\}}+(p-1)\lambda_-^p \chi_{\{u<0\}} \, dx \\
 & \qquad \qquad \leq \int_{B_{\sqrt{k}} \setminus B_k} |\nabla w|^p-|\nabla u|^p \, dx
=\int_{(B_{\sqrt{k}} \setminus B_k)\cap \{u>v\}} |\nabla v|^p-|\nabla u|^p \, dx \\
& \qquad \qquad \leq -p \int_{B_{\sqrt{k}} \setminus B_k} |\nabla v|^{p-2} \nabla v \cdot \nabla \max(u-v,0) \, dx \\
& \qquad \qquad = -p \int_{B_{\sqrt{k}} \setminus B_k} - \Delta_p v \, \max(u-v,0) + \mathrm{div} \left( |\nabla v|^{p-2} \nabla v \, \max(u-v,0) \right) \, dx \\
 & \qquad \qquad \le -p \int_{B_{\sqrt{k}} \setminus B_k} \mathrm{div} \left( |\nabla v|^{p-2} \nabla v \, \max(u-v,0) \right) \, dx \\
& \qquad \qquad = p \int_{\partial B_{k}} |\nabla v|^{p-2} (\nabla v \cdot \nu) u^+,
\end{aligned}
$$
where to get  the last inequality we have used the fact that $v$ is a $p$-superharmonic in $B_{\sqrt{k}} \setminus B_k$. 
Moreover, by \eqref{Non-D5}, we have that $|\nabla v|=2C_1 \varepsilon \mu k e^{-\mu k^2} \leq C \varepsilon$ on $\partial B_k$, for some $C>0$. Thus
\begin{equation}
\label{Non-D7}
\int_{B_k \cap \{u > 0 \}} |\nabla u|^p+(p-1)\lambda_+^p \chi_{\{u>0\}} \, dx \leq p(C\varepsilon)^{p-1} \int_{\partial B_k} u^+.
\end{equation}
On the other hand, from trace estimate, Young's inequality, we get
\begin{equation}
\label{Non-D8}
\begin{aligned}
\int_{\partial B_k} u^+ &\leq C(n,k) \left( \int_{B_k} u^+ \, dx + \int_{B_k} |\nabla u^+| \, dx \right) \\
& \leq C(n,k) \left( \sup_{B_k} u^+ \int_{B_k} \chi_{\{u>0\}} \, dx + \int_{B_k} \frac{1}{p} |\nabla u^+|^p + \frac{1}{p'} \chi_{\{u>0\}} \, dx \right) \\
& \leq C(n,k) \left( \Big(\varepsilon \sqrt{k}+\frac{1}{p'} \Big) \int_{B_k} \chi_{\{u>0\}} \, dx + \frac{1}{p} \int_{B_k} |\nabla u^+|^p \, dx \right) \\
& \leq C_0 \int_{B_k \cap \{u>0\}} |\nabla u|^p+(p-1)\lambda_+^p \chi_{\{u>0\}} \, dx,
\end{aligned}
\end{equation}
where $p'$ is the conjugate of $p$ and
$$C_0:=C(n,k) \left( \varepsilon\sqrt{k}+ 1 \right).$$
Finally, putting together \eqref{Non-D7} and \eqref{Non-D8}, we reach to 
$$
\begin{aligned}
\int_{B_k(x_0) \cap \{u > 0 \}} & |\nabla u|^p+(p-1)\lambda_+^p \chi_{\{u>0\}}  \, dx \\
& \leq p(C\varepsilon)^{p-1} C_0 \int_{B_k(x_0) \cap \{u > 0 \}} |\nabla u|^p+(p-1)\lambda_+^p \chi_{\{u>0\}} \, dx,
\end{aligned}
$$
which implies that $u \equiv 0$ in $B_k(x_0)$ if $\varepsilon$ is small enough. This completes the proof of the non-degeneracy property.
\end{proof}

\medskip


\section*{Declarations}

\noindent {\bf  Data availability statement:} All data needed are contained in the manuscript.

\medskip
\noindent {\bf  Funding and/or Conflicts of interests/Competing interests:} The authors declare that there are no financial, competing or conflict of interests.

\addcontentsline{toc}{section}{\numberline{}References}

\bibliographystyle{acm}
\bibliography{mybibfile}

\end{document}